\newtheorem{theorem}{Theorem}[section]
\newtheorem{lemma}[theorem]{Lemma}
\newtheorem{corollary}[theorem]{Corollary}
\newtheorem{proposition}[theorem]{Proposition}
\theoremstyle{definition}
\theoremstyle{remark}
\newtheorem{remark}[theorem]{Remark}
\newcommand{\dis}{\displaystyle}
\newcommand{\rmre}{{\rm Re}}
\newcommand{\R}{\mathbb{R}}
\newcommand{\T}{\mathbb{T}}
\newcommand{\semiG}{\mathbb{A}}
\newcommand{\highG}{\Theta}
\newcommand{\highB}{\Lambda}
\newcommand{\comml}{[\![}
\newcommand{\commr}{]\!]}
\newcommand{\sourceG}{g}
\newcommand{\FM}{\mathbf{M}}
\newcommand{\FP}{\mathbf{P}}
\newcommand{\FL}{\mathbf{L}}
\newcommand{\FI}{\mathbf{I}}
\newcommand{\CD}{\mathcal{D}}
\newcommand{\CE}{\mathcal{E}}
\newcommand{\CF}{\mathcal{F}}
\newcommand{\CH}{\mathcal{H}}
\newcommand{\CL}{\mathcal{L}}
\newcommand{\CN}{\mathcal{N}}
\newcommand{\CZ}{\mathcal{Z}}
\newcommand{\na}{\nabla}
\newcommand{\al}{\alpha}
\newcommand{\be}{\beta}
\newcommand{\om}{\omega}
\newcommand{\la}{\lambda}
\newcommand{\de}{\delta}
\newcommand{\pa}{\partial}
\newcommand{\eps}{\epsilon}
\newcommand{\De}{\Delta}
\newcommand{\Ga}{\Gamma}
\newcommand{\eqdef}{\overset{\mbox{\tiny{def}}}{=}}
\DeclareMathOperator{\esssup}{ess\,sup}
\begin{document}                        

\title[Decay of the Vlasov-Maxwell-Boltzmann System in $\mathbb{R}^3$]{Optimal Large-Time Behavior of the
Vlasov-Maxwell-Boltzmann System in the Whole Space}
\author{Renjun Duan}
\address{(RJD)
Department of Mathematics, The Chinese University of Hong Kong, Shatin, Hong Kong}
\email{rjduan at math.cuhk.edu.hk}
\urladdr{http://www.math.cuhk.edu.hk/~rjduan/}
\thanks{R.J.D. was partially supported by the Direct Grant 2010/2011.}

\author[R. M. Strain]{Robert M. Strain}
\address{(RMS) University of Pennsylvania, Department of Mathematics, David Rittenhouse Lab, 209 South 33rd Street, Philadelphia, PA 19104-6395, USA}
\email{strain at math.upenn.edu}
\urladdr{http://www.math.upenn.edu/~strain/}
\thanks{R.M.S. was partially supported by the NSF grant DMS-0901463, and an Alfred P. Sloan Foundation Research Fellowship.}

\begin{abstract}
In this paper we study the large-time behavior of classical
solutions to the two-species Vlasov-Maxwell-Boltzmann system in the
whole space $\R^3$.    The existence of  global in time nearby
Maxwellian solutions is known from \cite{S.VMB} in 2006.  However
the asymptotic behavior of these solutions  has been a challenging
open problem.  Building on our previous work \cite{DS-VPB} on time
decay for the simpler Vlasov-Poisson-Boltzmann system, we prove that
these solutions converge to the global Maxwellian with the optimal
decay rate of  $O(t^{-\frac{3}{2}+\frac{3}{2r}})$ in
$L^2_\xi(L^r_x)$-norm for any $2\leq r\leq \infty$ if initial
perturbation is smooth enough and decays in space-velocity fast
enough at infinity. Moreover, some explicit rates for the
electromagnetic field tending to zero are also provided.
\end{abstract}

\maketitle

\thispagestyle{empty}




\setcounter{tocdepth}{1}
\tableofcontents

\section{Introduction}

The Vlasov-Maxwell-Boltzmann system is an important physical model to describe the time evolution of dilute charged particles (e.g. electrons and ions) under the influence of their self-consistent internally generated Lorentz forces \cite{MRS}.
The existence of global in time solutions to this Cauchy problem which are perturbations of a Maxwellian equilibria is known  since 2006 in \cite{S.VMB}.
The time rate of convergence to equilibrium is an important topic in the
mathematical theory of the physical world.
It has however remained  an open problem to determine the large time behavior of these solutions.
In this work, we resolve this problem proving convergence to Maxwellian with the optimal rate of $O(t^{-\frac{3}{2}+\frac{3}{2r}})$ in $L^2_\xi(L^r_x)$-norm with $2\leq r\leq \infty$ in three spatial dimensions.

\subsection{Equation and reformulation}
 For two species of particles, the Vlasov-Maxwell-Boltzmann system is given in  its non-dimensional version as
\begin{equation}
\label{VMB.B}
\begin{split}
  \pa_t f_+ +\xi\cdot \na_x f_+ + (E+\xi \times B) \cdot \na_\xi f_+ &=
  Q(f_+,f_+)+Q(f_+,f_-),
  \\
  \pa_t f_- +\xi\cdot \na_x f_- - (E+\xi \times B) \cdot \na_\xi f_- &=
  Q(f_-,f_+)+Q(f_-,f_-).
  \end{split}
\end{equation}
It is coupled with the Maxwell system
\begin{eqnarray}
&& \pa_t E-\na_x\times B=-\int_{\R^3} \xi (f_+-f_-)d\xi,\label{VMB.M1}\\
&& \pa_t B+\na_x\times E =0,\label{VMB.M2}\\
&& \na_x\cdot E =\int_{\R^3} (f_+-f_-)d\xi,\ \ \na_x \cdot
B=0.\label{VMB.M3}
\end{eqnarray}
The initial data in this system is given as
\begin{eqnarray}\label{VMB.ID}
&\dis f_\pm(0,x,\xi)=f_{0,\pm}(x,\xi),\ \ E(0,x)=E_0(x),\ \
B(0,x)=B_0(x).
\end{eqnarray}
The initial data should satisfy the compatibility conditions
\begin{equation}
\notag
\na_x\cdot E_0=\int_{\R^3}(f_{0,+}-f_{0,-})d\xi,\ \ \na_x\cdot
B_0=0.
\end{equation}
Here the unknowns are $f_\pm=f_\pm(t,x,\xi)\geq 0$ and $E(t,x)$,
$B(t,x)$, standing for the number densities of ions
$(+)$ and electrons $(-)$ which have position
$x=(x_1,x_2,x_3)\in\R^3$ and velocity
$\xi=(\xi_1,\xi_2,\xi_3)\in\R^3$ at time $t>0$, and the
electromagnetic field, respectively. $Q$ is the bilinear Boltzmann
collision operator for the hard-sphere model defined by
\begin{eqnarray*}
&\dis Q(f,g)=\int_{\R^3\times S^{2}}(f'g_\ast'-fg_\ast)
  |(\xi-\xi_\ast)\cdot\om |d\om d\xi_\ast, \\
&\dis f=f(t,x,\xi),\ \ f'=f(t,x,\xi'), \ \ g_\ast=g(t,x,\xi_\ast), \
\ g_\ast'=g(t,x,\xi_\ast'),\\
&\dis \xi'=\xi-[(\xi-\xi_\ast)\cdot \om]\om,\ \
\xi_\ast'=\xi_\ast+[(\xi-\xi_\ast)\cdot \om]\om,\ \ \om\in S^{2}.
\end{eqnarray*}
We will study solutions to this system which are initially perturbations of the Maxwellian equilibrium states.

We write the normalized global Maxwellian as
\begin{equation}
\notag
\FM=\FM(\xi)=(2\pi)^{ -3/2}e^{-|\xi|^2/2}.
\end{equation}
Then we linearize around  {the} perturbation $u$ in the standard way
\begin{equation}
\notag
f_\pm=\FM + \FM^{1/2} u_\pm.
\end{equation}
Denote the column vector $[\cdot,\cdot]$ as follows:
$
f=[f_+,f_-],
$
$
u=[u_+,u_-].
$
Then the Cauchy
problem \eqref{VMB.B},
\eqref{VMB.M1}, \eqref{VMB.M2}, \eqref{VMB.M3},  \eqref{VMB.ID} can be
reformulated as
\begin{eqnarray}
&&\dis \pa_t u +\xi\cdot\na_x u + q (E+\xi \times B)\cdot \na_\xi u-
E\cdot \xi \FM^{1/2} q_1
\label{VMB.pB}
\\
&&\dis \hspace{6cm}=\FL u +\frac{q}{2}E\cdot \xi u +\Ga(u,u),
\notag
\\
&&\dis \pa_t E-\na_x\times B=-\int_{\R^3} \xi \FM^{1/2}(u_+-u_-)d\xi,\label{VMB.pM1}\\
&&\dis \pa_t B+\na_x\times E =0,\label{VMB.pM2}\\
&&\dis\na_x\cdot E =\int_{\R^3}\FM^{1/2} (u_+-u_-)d\xi,\ \ \na_x \cdot
B=0,
\label{VMB.pM3}
\end{eqnarray}
with initial data
\begin{eqnarray}\label{VMB.pID}
&\dis u_\pm(0,x,\xi)=u_{0,\pm}(x,\xi),\ \ E(0,x)=E_0(x),\ \
B(0,x)=B_0(x),
\end{eqnarray}
satisfying the compatibility condition
\begin{equation}
\notag
\na_x\cdot E_0=\int_{\R^3} {\FM^{1/2}}(u_{0,+}-u_{0,-})d\xi,\ \ \na_x\cdot
B_0=0.
\end{equation}
Here,  $q={\rm diag} (1,-1)$, $q_1=[1,-1]$, and the linearized
collision term $\FL u$ and the nonlinear collision $\Ga (u,u)$ are
respectively defined by
\begin{equation}
\notag
    \FL u = [\FL_+ u, \FL_- u],\ \ \Ga(u,v)=[\Ga_+(u,v),\Ga_-(u,v)],
\end{equation}
with
\begin{eqnarray*}
 \FL_\pm u &=&2\FM^{-1/2} Q(\FM^{1/2} u_\pm, \FM)+\FM^{-1/2}
    Q(\FM,\FM^{1/2}\{u_\pm+u_\mp\}),\\
\Ga_\pm (u,v)&=&\FM^{-1/2} Q(\FM^{1/2} u_\pm, \FM^{1/2}
    v_\pm)+\FM^{-1/2} Q(\FM^{1/2} u_\pm, \FM^{1/2} v_\mp).
\end{eqnarray*}
In the sequel we will study the asymptotic behavior of solutions to this system.

For the linearized collision operator $\FL$, one has the following standard facts \cite{CIP-Book}. $\FL$ can be split as $\FL u=-\nu(\xi) u+K u$, where
 the collision frequency is given by
\begin{equation}
\notag
\nu(\xi)=\int_{\R^3\times
  S^{2}}|(\xi-\xi_\ast)\cdot\om|\FM(\xi_\ast)\, d\om
  d\xi_\ast.
\end{equation}
 Notice that $\nu(\xi)\sim (1+|\xi|^2)^{1/2}$.  The null space of $\FL$ is given by
\begin{equation}\notag
    \CN={\rm span}\left\{[1,0]\FM^{1/2},\ [0,1]\FM^{1/2}, [\xi_i,\xi_i]\FM^{1/2}
 ~ (1\leq i\leq 3),
    [|\xi|^2,|\xi|^2]\FM^{1/2} \right\}.
\end{equation}
The linearized collision operator $\FL$ is non-positive and further $-\FL$ is known to be locally coercive in
the sense that there is a constant $\la_0>0$ such that \cite{CIP-Book}:
\begin{equation}\label{coerc}
    -\int_{\R^3}u\FL u\,d\xi\geq \la_0\int_{\R^3}\nu(\xi)|\{\FI-\FP\}u|^2d\xi,
\end{equation}
where, for fixed $(t,x)$, $\FP$ denotes the orthogonal projection from
$L^2_\xi$ to $\CN$.

\subsection{Notations and main results}

Throughout this paper,  $C$  denotes
some positive (generally large) constant and $\la$ denotes some positive (generally small) constant, where both $C$ and
$\la$ may take different values in different places. In addition,
$A\sim B$ means $\la A\leq B \leq \frac{1}{\la} A$ for a generic
constant $0<\la<1$. For any
integer $m\geq 0$, we use $H^m$, $H^m_x$, $H^m_\xi$ to denote the
usual Hilbert spaces $H^m(\R^n_x\times\R^n_\xi)$, $H^m(\R^n_x)$,
$H^m(\R^n_\xi)$, respectively, where $L^2$, $L^2_x$, $L^2_\xi$ are
used for the case when $m=0$. For a Banach space $X$,
$\|\cdot\|_{X}$ denotes the corresponding norm, while $\|\cdot\|$
always denotes the norm $\|\cdot\|_{L^2}$ or $\|\cdot\|_{L^2_x}$ for
simplicity.
We use $\langle\cdot,\cdot\rangle$ to denote the inner product over
the Hilbert space $L^2_\xi$, i.e.
\begin{equation*}
    \langle g,h\rangle=\int_{\R^3} g(\xi)h(\xi)d\xi,\ \ g=g(\xi),h=h(\xi)\in
    L^2_\xi,
\end{equation*}
 {and for simplicity, $\langle\cdot,\cdot\rangle$ is also used as the inner product over $L^2$ when there is no possibility of confusion.} For $r\geq 1$, we also define the standard time-space mixed Lebesgue
space $Z_r=L^2_\xi(L^r_x)=L^2(\R^3_\xi;L^r(\R^3_x))$ with the norm
\begin{equation*}
\|g\|_{Z_r}=\left(\int_{\R^3}\left(\int_{\R^3}
    |g(x,\xi)|^rdx\right)^{2/r}d\xi\right)^{1/2},\ \ g=g(x,\xi)\in Z_r.
\end{equation*}
For
multi-indices $\al=[\al_0,\al_1,\al_2,\al_3]$ and
$\be=[\be_1,\be_2,\be_3]$, we denote
\begin{equation*}
 \pa^{\al}_\be=\pa_t^{\al_0}\pa_{x_1}^{\al_1}\pa_{x_2}^{\al_2}\pa_{x_3}^{\al_3}
    \pa_{\xi_1}^{\be_1}\pa_{\xi_2}^{\be_2}\pa_{\xi_3}^{\be_3}.
\end{equation*}
The length of $\al$ is $|\al|=\al_0+\al_1+\al_2+\al_3$ and the length of
$\be$ is $|\be|=\be_1+\be_2+\be_3$. For simplicity,
we also use $\pa_j$ to denote $\pa_{x_j}$ for each $j=1,2,3$.

For an integrable function $g: \R^3\to\R$, its Fourier transform
is defined by
\begin{equation*}
  \widehat{g}(k)= \CF g(k)= \int_{\R^3} e^{-2\pi i x\cdot k} g(x)dx, \quad
  x\cdot
   k\eqdef\sum_{j=1}^3 x_jk_j,
   \quad
   k\in\R^3,
\end{equation*}
where $i =\sqrt{-1}\in \mathbb{C}$ is the imaginary unit. For two
complex vectors $a,b\in\mathbb{C}^3$, $(a\mid b)=a\cdot
\overline{b}$ denotes the dot product over the complex field, where
$\overline{b}$ is the ordinary complex conjugate of $b$. Note
further that we will use a subscript $j$ in an equation number to
denote the $j$-th equation displayed.  Precisely, for example
\eqref{VMB.B}$_2$ refers to the second equation in \eqref{VMB.B}.

Given a solution $[u(t,x,\xi),E(t,x),B(t,x)]$ to the Vlasov-Maxwell-Boltzmann system \eqref{VMB.pB}-\eqref{VMB.pID},
{an instant full energy functional is defined as a continuous function
$\CE_{N,m}(t)$ which satisfies \eqref{def.eNm}.
The instant high-order energy functional $\CE_{N,m}^{\rm h}(t)$
is defined similarly, satisfying instead \eqref{def.eNm.h}.
In particular we use}
\begin{eqnarray}
\CE_{N,m}(t)&\sim& \sum_{|\al|+|\be|\leq N}\|\nu^{\frac{m}{2}}\pa^\al_\be u(t)\|^2+\sum_{|\al|\leq N}\|\pa^\al [E(t),B(t)]\|^2,\label{def.eNm}\\
\CE_{N,m}^{\rm h}(t)&\sim& \sum_{1\leq |\al|\leq N}\|\nu^{\frac{m}{2}}\pa^\al u(t)\|^2+\sum_{|\al|+|\be|\leq N}\|\nu^{\frac{m}{2}}\pa^\al_\be  \{\FI-\FP\}u(t)\|^2
\notag
\\
&&+\sum_{1\leq |\al|\leq N}\|\pa^\al [E(t),B(t)]\|^2 {+\|E(t)\|^2},\label{def.eNm.h}
\end{eqnarray}
and we define the dissipation rate $\CD_{N,m}(t)$ as
\begin{eqnarray}
\nonumber
\CD_{N,m}(t)&\eqdef&\sum_{1\leq |\al|\leq N}\|\nu^{\frac{m+1}{2}}\pa^\al u(t)\|^2+\sum_{|\al|+|\be|\leq N}\|\nu^{\frac{m+1}{2}}\pa^\al_\be \{\FI-\FP\}u(t)\|^2\\
&&+\sum_{ {1\leq |\al|\leq N-1}}\|\pa^\al[E(t),B(t)]\|^2+\|E(t)\|^2,
\label{def.dNm}
\end{eqnarray}
for integers $N$ and $m$. For brevity, we write $\CE_N(t)=\CE_{N,0}(t)$, $\CE_N^{\rm h}(t)=\CE_{N,0}^{\rm h}(t)$ and $\CD_N(t)=\CD_{N,0}(t)$ when $m=0$. Throughout this paper we assume $N\geq 4$.

From \cite{S.VMB} and a little additional efforts about estimates of the electromagnetic field $[E(t,x),B(t,x)]$, we have the following existence result.

\begin{proposition}\label{prop.exi}
Let $[u_0,E_0,B_0]$ satisfy \eqref{VMB.pM3} and
\begin{equation*}
    f_{0,\pm}(x,\xi)=\FM+\FM^{1/2}u_{0,\pm}(x,\xi)\geq 0.
\end{equation*}
There
 {exists $\CE_N(t)$} such that if $\CE_N(0)$ is
sufficiently small, then the Cauchy problem \eqref{VMB.pB},
\eqref{VMB.pM1}, \eqref{VMB.pM2}, \eqref{VMB.pM3}, and \eqref{VMB.pID} to
the Vlasov-Maxwell-Boltzmann system admits a unique global solution
$[u(t,x,\xi),E(t,x),B(t,x)]$ with
\begin{equation*}
    f_{\pm}(t,x,\xi)=\FM+\FM^{1/2}u_{\pm}(t,x,\xi)\geq 0,
\end{equation*}
and the Lyapunov inequality
\begin{equation}\label{prop.exi.3}
    \frac{d}{dt}\CE_N(t)+\la \CD_N(t)\leq 0,
\end{equation}
for any $t\geq 0$.
\end{proposition}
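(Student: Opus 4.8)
The plan is to upgrade the small-amplitude global existence theory of \cite{S.VMB} by running a single closed a priori energy estimate that simultaneously produces the dissipation rate $\CD_N(t)$ in \eqref{def.dNm} together with the refined electromagnetic bounds it records. Local-in-time existence and uniqueness for sufficiently small $\CE_N(0)$, as well as propagation of the positivity $f_\pm\geq 0$ by the transport structure of the original system \eqref{VMB.B}, are already available from the iteration scheme of \cite{S.VMB}; the only genuinely new point is the Lyapunov inequality \eqref{prop.exi.3}, after which a routine continuation (bootstrap) argument extends the local solution to all $t\geq 0$ while keeping $\CE_N(t)$ small and $\sim\CE_N(0)$.

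I would start from the basic energy identity: apply $\pa^\al$ with $|\al|\leq N$ (pure $t,x$ derivatives) to \eqref{VMB.pB}, pair with $\pa^\al u$ in $L^2_{x,\xi}$, and add the $L^2_x$ estimates obtained by testing the Maxwell equations \eqref{VMB.pM1}--\eqref{VMB.pM2} against $\pa^\al[E,B]$. The transport term $\xi\cdot\na_x u$ is skew-adjoint, the Lorentz terms $q(E+\xi\times B)\cdot\na_\xi u$ and $\tfrac q2 E\cdot\xi\,u$ give respectively a vanishing contribution (since $(\xi\times B)\cdot\xi=0$ and $\na_\xi\cdot(\xi\times B)=0$) and a cubic remainder, while the linear source $E\cdot\xi\FM^{1/2}q_1$ cancels exactly against the current term $\langle\xi\FM^{1/2}(u_+-u_-),E\rangle$ from \eqref{VMB.pM1}, and the cross terms $\langle\na_x\times B,E\rangle$ and $\langle\na_x\times E,B\rangle$ likewise cancel after integration by parts. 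Local coercivity \eqref{coerc} then yields $\sum_{|\al|\leq N}\|\nu^{1/2}\pa^\al\{\FI-\FP\}u\|^2$ as dissipation, and every remaining nonlinear term — the cubic Lorentz remainder and $\langle\pa^\al\Ga(u,u),\pa^\al u\rangle$ — is bounded by $\sqrt{\CE_N}\,\CD_N$ using $L^\infty_x$ Sobolev embeddings (this is where $N\geq 4$ enters) and the fact that $\Ga$ maps into the $\nu^{-1/2}$-weighted space.

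The crucial step, which separates VMB from the Boltzmann or Vlasov--Poisson--Boltzmann cases, is the recovery of macroscopic dissipation. Decomposing $\FP u=\{a_\pm+\xi\cdot b+|\xi|^2 c\}\FM^{1/2}$ and inserting this into \eqref{VMB.pB} tested against the collision invariants produces a closed fluid-type system for $(a_\pm,b,c)$ coupled to $E$ through the Poisson relation \eqref{VMB.pM3} and forced by $\{\FI-\FP\}u$ and the nonlinearity. Following the interaction-functional method of Guo and of our earlier paper \cite{DS-VPB}, I would construct an auxiliary functional $\CE_N^{\rm int}(t)$, built from $L^2_x$ pairings of $\pa^\al$ of the macroscopic moments with $\pa^\al$ of $\{\FI-\FP\}u$ and with $E$, whose time derivative dominates $\sum_{1\leq|\al|\leq N}\|\pa^\al[a,b,c]\|^2+\|E\|^2$ modulo quantities already controlled by $\CD_N$; adding $\kappa\,\CE_N^{\rm int}(t)$ with $\kappa>0$ small to the basic energy then closes the $\|E\|^2$ dissipation and, via \eqref{VMB.pM3}, the low-order macroscopic part. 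The magnetic field $B$, however, is only carried by $\pa_t B+\na_x\times E=0$ and has no dissipation of its own, so only $\sum_{1\leq|\al|\leq N-1}\|\pa^\al[E,B]\|^2$ survives in $\CD_N$; this loss of one derivative on $[E,B]$ is precisely why the top-order electromagnetic norms appear in $\CE_N$ but not in $\CD_N$. Finally, the mixed derivatives $\pa^\al_\be\{\FI-\FP\}u$ are handled by induction on $|\be|$, using $\FL=-\nu(\xi)+K$ with $\nu(\xi)\sim(1+|\xi|^2)^{1/2}$ and the relative compactness of $K$.

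The main obstacle is therefore the electromagnetic coupling: the magnetic field does not decay and carries no dissipative mechanism, while the force term $\xi\times B\cdot\na_\xi u$ has an unbounded velocity factor, so both must be absorbed into the microscopic dissipation and the smallness of $\CE_N$, at the price of the degenerate (one-order-lower) control of $[E,B]$ in $\CD_N$ and the structural requirement $N\geq 4$. Assembling the pieces, I expect to obtain constants $C,\kappa,\la>0$ with $\frac{d}{dt}\big(\CE_N(t)+\kappa\,\CE_N^{\rm int}(t)\big)+\la\,\CD_N(t)\leq C\sqrt{\CE_N(t)}\,\CD_N(t)$; since $\CE_N+\kappa\,\CE_N^{\rm int}\sim\CE_N$ for $\kappa$ small, the smallness of $\CE_N(0)$ makes the right-hand side absorbable, yielding \eqref{prop.exi.3} and thereby closing the continuation argument and the proposition.
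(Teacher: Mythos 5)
Your overall route differs from the paper's: the paper does not redo the full nonlinear energy estimate at all, but quotes \cite[Theorem 1]{S.VMB} for existence, uniqueness, positivity \emph{and} for the basic Lyapunov inequality \eqref{app.p1}, which already contains the microscopic dissipation, the dissipation $\sum_{1\le|\al|\le N}\|\nu^{1/2}\pa^\al u\|^2$ of all derivatives of $u$, and the zero-order dissipation $\la\|E\|^2$. The only new content of the paper's proof is showing that the remaining electromagnetic terms $\sum_{1\le|\al|\le N-1}\|\pa^\al[E,B]\|^2$ in the definition \eqref{def.dNm} of $\CD_N$ can be added to that dissipation rate. Re-deriving the closed estimate from scratch, as you propose, is legitimate in principle (it is essentially \cite{S.VMB} plus the macroscopic interaction functionals of \cite{DS-VPB}), but it makes the proof much longer and, more importantly, it is exactly on the genuinely new part that your proposal is silent.

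The gap is the recovery of dissipation for the electromagnetic field beyond $\|E\|^2$. Your interaction functional is described as yielding $\sum_{1\le|\al|\le N}\|\pa^\al[a_\pm,b,c]\|^2+\|E\|^2$, and you then simply assert that ``only $\sum_{1\le|\al|\le N-1}\|\pa^\al[E,B]\|^2$ survives in $\CD_N$''; but that sentence describes what is written in \eqref{def.dNm}, not how those terms are controlled, and with the estimates you list the Lyapunov inequality \eqref{prop.exi.3} would only hold with a strictly weaker dissipation (no $\pa^\al E$ for $|\al|\ge 1$ and no $B$ at all). Two mechanisms are needed and neither appears in your sketch. First, the higher derivatives of $E$ are obtained not from the Poisson relation (which only sees $\na_x\cdot E$) but from the moment equation for the current $G=\langle[\xi,-\xi]\FM^{1/2},\{\FI-\FP\}u\rangle$, i.e.\ \eqref{m1-.non}, which expresses $2E$ as $\pa_t G+\na_x(a_+-a_-)+\na_x\cdot\highG(\{\FI-\FP\}u\cdot q_1)$ up to quadratic terms; this gives \eqref{app.p2}, bounding $\sum_{|\al|\le N-1}\|\pa^\al E\|^2$ by derivatives of $\{\FI-\FP\}u$ up to order $N$ (this is also \cite[Lemma 6]{S.VMB}). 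Second, $B$ has no damping term in any equation, so its dissipation must be transferred from $E$ and from the current through the Maxwell system: for derivatives containing $\pa_t$ one uses $\pa_t B=-\na_x\times E$, while for purely spatial derivatives one needs the divergence-free elliptic identity $\De_x B=-\na_x\times(\na_x\times B)$ together with Amp\`ere's law \eqref{VMB.pM1}, writing $\pa^\al B=-\pa^{\al'}\pa_i\De_x^{-1}\na_x\times[\pa_t E+G]$ and invoking $L^2$-boundedness of $\pa_i\De_x^{-1}\na_x$; this yields \eqref{app.p3} and explains the one-derivative loss ($|\al|\le N-1$ only). Without these two steps (or the equivalent interactive functionals $(-ik\times\hat B\mid\hat E)$ and $(\hat G\mid\hat E)$ used in the linearized analysis), your argument does not produce the full $\CD_N(t)$ claimed in \eqref{prop.exi.3}.
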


Moreover, on the basis of the above existence result, one can obtain some velocity-weighted or  high-order energy inequalities stated as follows.

\begin{theorem}\label{thm.energy}
Suppose that all the assumptions of Proposition \ref{prop.exi} hold.
For any given $m=0,1,2,\ldots$,
{there exists $\CE_{N,m}(t)$}
such that if $\CE_{N,m-1}(0)$ is sufficiently small
and $\CE_{N,m}(0)$ is finite, then
\begin{eqnarray}
\frac{d}{dt}\CE_{N,m}(t)+\la \CD_{N,m}(t)\leq 0,
\label{thm.energy.1}
\end{eqnarray}
holds for any $t\geq 0$, where $\CE_{N,-1}(0)=\CE_{N}(0)$ is set and
$\la$ may depend on $m$. In addition, if $\CE_{N}(0)$ is
sufficiently small, then there is $\CE_{N}^{\rm h}(t)=\CE_{N,0}^{\rm
h}(t)$ such that
\begin{equation}\label{thm.energy.2}
\frac{d}{dt}\CE_{N}^{\rm h}(t)+\la \CD_{N}(t)\leq
C(\|
      {
     \nu^{1/2}
     }
\na_x\FP u(t)\|^2 {+\|\na_x\times B(t)\|^2}),
\end{equation}
holds for any $t\geq 0$.
\end{theorem}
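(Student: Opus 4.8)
The plan is to prove the two assertions separately, in both cases bootstrapping from the basic Lyapunov inequality \eqref{prop.exi.3} of Proposition \ref{prop.exi}; the first is a velocity-weight induction, the second is a re-run of the full energy scheme with all zeroth-order-in-$x$ macroscopic and magnetic pieces removed.

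\textbf{The weighted inequality \eqref{thm.energy.1}.} I would argue by induction on $m$, the base case $m=0$ being exactly \eqref{prop.exi.3} with the convention $\CE_{N,-1}(0)=\CE_N(0)$. Assuming the estimate at level $m-1$, apply $\pa^\al_\be$ with $|\al|+|\be|\le N$ to \eqref{VMB.pB}, take the $L^2_{x,\xi}$ inner product against $\nu^m\pa^\al_\be u$, and sum, running a secondary induction on the velocity-derivative order $|\be|$. A $\nu^m$-weighted version of the coercivity estimate \eqref{coerc} supplies the dissipation $\|\nu^{(m+1)/2}\pa^\al_\be\{\FI-\FP\}u\|^2$ up to a controllable multiple of lower-weight quantities; the transport operator $\xi\cdot\na_x$ is antisymmetric when $\be=0$, and when $\be\neq0$ the Leibniz remainder $\pa_{x_j}\pa^\al_{\be-e_j}u$ is absorbed by the secondary induction together with Young's inequality and $\nu(\xi)\sim(1+|\xi|^2)^{1/2}$. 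The decisive point for the Lorentz force is the cancellation $(\xi\times B)\cdot\na_\xi\nu^m(\xi)=0$, valid because $\na_\xi\nu$ is parallel to $\xi$; this prevents the weight from being spoiled, after which the remaining force terms $qE\cdot\na_\xi u$, $\tfrac{q}{2}E\cdot\xi u$, $E\cdot\xi\FM^{1/2}q_1$ and the nonlinear term $\Ga(u,u)$ are handled by the smallness of $\CE_{N,m-1}(0)$ and the rapid $\xi$-decay of $\FM^{1/2}$ (using also $\xi\FM^{1/2}q_1\perp\CN$, which kills the macroscopic part of the field source at weight zero). One then sets $\CE_{N,m}=M\CE_{N,m-1}+(\text{the weight-}m\text{ increment})$ with $M$ large, so that the term $C\CD_{N,m-1}$ generated on the right is absorbed into $\la M\CD_{N,m-1}$ by the induction hypothesis, giving \eqref{thm.energy.1}.

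\textbf{The high-order inequality \eqref{thm.energy.2}.} Here the scheme of Proposition \ref{prop.exi} is repeated after deleting every zeroth-order-in-$x$ macroscopic and magnetic quantity, which forces a few low-order pieces onto the right-hand side. I would: (i) apply $\pa^\al$ for $1\le|\al|\le N$ to \eqref{VMB.pB}, test with $\pa^\al u$, and use \eqref{coerc} to extract $\|\nu^{1/2}\pa^\al\{\FI-\FP\}u\|^2$; (ii) apply $\pa^\al_\be$ with $|\al|+|\be|\le N$ to the microscopic equation, test with $\pa^\al_\be\{\FI-\FP\}u$, and close the microscopic dissipation at all orders $\le N$; (iii) run the macroscopic estimates: from the local balance laws for the components of $\FP u$ build an interactive functional with cross terms of the form $\langle\na_x\pa^\al(\text{macro}),\pa^\al(\text{macro})\rangle$ and $\langle\na_x\pa^\al\{\FI-\FP\}u,\cdot\rangle$, whose time derivative dominates $\sum_{1\le|\al|\le N-1}\|\na_x\pa^\al\FP u\|^2$ — since no order-zero macroscopic term is at hand, only macroscopic gradients of order $\ge2$ are recovered and $\|\nu^{1/2}\na_x\FP u\|^2$ stays on the right; (iv) treat the Maxwell system via the identity $\pa_tE=\na_x\times B-\int_{\R^3}\xi\FM^{1/2}(u_+-u_-)\,d\xi$ coupled with the balance law for the current density, producing through an auxiliary functional the dissipation $\|E\|^2$ and $\sum_{1\le|\al|\le N-1}\|\na_x\pa^\al B\|^2$, with $\|\na_x\times B\|^2$ surviving on the right for the same reason. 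A suitable linear combination — a large multiple of the top-order energy plus small multiples of the interactive functionals — then defines $\CE_N^{\rm h}(t)$ satisfying \eqref{def.eNm.h} and \eqref{thm.energy.2}.

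The main obstacle is steps (iii)--(iv): organizing the macroscopic and electromagnetic estimates so that the only quantities that genuinely cannot be absorbed are precisely $\|\nu^{1/2}\na_x\FP u\|^2$ and $\|\na_x\times B\|^2$, i.e. the order-one macroscopic gradient and the order-one magnetic curl. Getting the bookkeeping of these borderline terms exactly right, and choosing the weights and constants in the final combination so that all other contributions close, is where the real work concentrates; by contrast the nonlinear and force terms are controlled routinely by the smallness of $\CE_N(0)$.
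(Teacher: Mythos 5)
Your second half (the high-order inequality \eqref{thm.energy.2}) follows the paper's strategy in outline: estimates on pure spatial derivatives of order $1\le|\al|\le N$, a zeroth-order microscopic estimate, mixed-derivative estimates, interactive macroscopic functionals built from the moment equations, and an auxiliary functional coupling Maxwell's equations with the current density, with precisely the two borderline quantities $\|\nu^{1/2}\na_x\FP u\|^2$ and $\|\na_x\times B\|^2$ left on the right (and indeed the first-order pieces of $\CD_N$ that your scheme does not recover can simply be added to both sides, since they coincide with those right-hand terms). What you only gesture at is the actual content of the paper's Lemmas on the macroscopic dissipation and on the bounds for $E$ and $B$, and in particular the device of converting the source term $\langle\{\FI-\FP\}(E\cdot\xi\FM^{1/2}q_1),\{\FI-\FP\}u\rangle$ via \eqref{VMB.pM1} into $-\tfrac12\tfrac{d}{dt}\|E\|^2+\int E\cdot\na_x\times B\,dx$, which is what places $\|E\|^2$ inside $\CE_N^{\rm h}$ as \eqref{def.eNm.h} requires; but as a plan this part is sound.

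The genuine gap is in your proof of \eqref{thm.energy.1}. You propose to apply $\pa^\al_\be$ to \eqref{VMB.pB} for all $|\al|+|\be|\le N$ and test against $\nu^m\pa^\al_\be u$ with the \emph{full} $u$, macroscopic part included. This fails exactly when $\al=0$. Once a velocity weight is present the collision term no longer decouples from the macroscopic component: pairing $\FL u=\FL\{\FI-\FP\}u$ with $\nu^m u$ produces the cross term $\langle\FL\{\FI-\FP\}u,\nu^m\FP u\rangle$, which is nonzero for $m\ge1$ (self-adjointness kills it only without the weight, since $\nu^m\FP u\notin\CN$), and the compact-interpolation remainder in your ``weighted coercivity'' applied to the full $u$ leaves $C\|u\|^2\supset C\|\FP u\|^2$; for $\al=0$, $\be\ne0$ the Leibniz and commutator terms involving $\pa_\be\FP u$ are again of zeroth order in $x$. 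All of these are controlled only by $\|\FP u\|^2_{L^2_{x,\xi}}$ with no spatial derivative, a quantity contained in no $\CD_{N,m'}$, so your absorption step ``into $\la M\CD_{N,m-1}$ by the induction hypothesis'' has nothing to act on, and the inequality cannot be closed with zero right-hand side (smallness does not help: these terms are linear). The missing idea is the paper's: first apply $\{\FI-\FP\}$ to \eqref{VMB.pB} and run the weighted estimate on the microscopic component alone at zeroth order and for all mixed $x,\xi$-derivatives, reserving the full-$u$ estimate for pure spatial derivatives $1\le|\al|\le N$, where the leftover $\|\pa^\al u\|^2$ does lie in the dissipation; the weighted zeroth-order macroscopic piece needs no evolution estimate at all, because $\nu^{m/2}\FP u$ is comparable to $\FP u$ thanks to the Gaussian factor and is already carried by $\CE_N\subset\CE_{N,m-1}$, which is how the equivalence \eqref{def.eNm} is maintained. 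Your observation that $(\xi\times B)\cdot\na_\xi\nu=0$ because $\nu$ is radial is correct and gives a small shortcut over the paper's bound for the magnetic part of the Lorentz force, but it does not address this difficulty.
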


From the proof later on, the above nonlinear energy estimates
together with the time-decay estimates on the linearized
non-homogeneous system indeed lead to optimal time-decay rates of
the instant energy functionals $\CE_{N,m}(t)$ $m=0,1,2,\cdots$ and
$\CE_{N}^{\rm h}(t)$  under some additional regularity and
integrability conditions on initial data. Precisely, our main
results in this paper are stated as follows. Set $\eps_{j,m}$ as
\begin{equation}\label{def.id.jm}
    \eps_{j,m}\eqdef\CE_{j,m}(0)+\|u_0\|_{Z_1}^2+\|[E_0,B_0]\|_{L^1}^2,
\end{equation}
for integers $j$ and $m$.

\begin{theorem}\label{thm.ns}
Let $[u(t,x,\xi),E(t,x),B(t,x)]$  be the
solution to   the Cauchy problem \eqref{VMB.pB},
\eqref{VMB.pM1}, \eqref{VMB.pM2}, \eqref{VMB.pM3}, and \eqref{VMB.pID} of
the Vlasov-Maxwell-Boltzmann system obtained in
Proposition \ref{prop.exi}. For any fixed $m=0,1,2,\ldots$, if $\eps_{N+2,m\vee 1}$ is sufficiently small
where $m\vee 1\eqdef\max\{m,1\}$, then
\begin{equation}\label{thm.ns.2}
    \CE_{N,m}(t)\leq C\eps_{N+2,m} (1+t)^{-\frac{3}{2}},
\end{equation}
holds for any $t\geq 0$. In addition, if $\eps_{N+5,1}$ is sufficiently small then
\begin{equation}\label{thm.ns.3}
    \CE_N^{\rm h}(t)\leq C\eps_{N+5,1} (1+t)^{-\frac{5}{2}},
\end{equation}
holds for any $t\geq 0$.
\end{theorem}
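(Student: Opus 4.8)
The plan is to combine the nonlinear energy inequalities from Theorem~\ref{thm.energy} with linearized time-decay estimates via a bootstrap/Duhamel argument, following the strategy developed for the Vlasov-Poisson-Boltzmann system in \cite{DS-VPB} and adapting it to the more delicate Vlasov-Maxwell-Boltzmann structure, where the magnetic field $B$ is not directly dissipated. First I would establish linearized decay estimates: writing the linearized system as $\pa_t U = \semiG U$ with solution semigroup $e^{t\semiG}$ acting on $U=[u,E,B]$, I would carry out a Fourier analysis in $x$, splitting into low frequencies $|k|\leq 1$ and high frequencies $|k|\geq 1$. For low frequencies one obtains spectral estimates yielding pointwise-in-$k$ decay like $e^{-\la|k|^2 t}$ (with the magnetic field possibly exhibiting weaker $|k|$-dependent rates), which after integration against $L^1_x$ data gives $\|e^{t\semiG}U_0\|_{Z_2}\lesssim (1+t)^{-3/4}(\|U_0\|_{Z_1}+\|U_0\|)$ and correspondingly $(1+t)^{-5/4}$ for one extra spatial derivative; high frequencies are controlled by the energy dissipation and decay exponentially (modulo derivative loss). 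I would record these as a lemma about $\|\pa^\al e^{t\semiG}U_0\|_{Z_2}$.

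Next, using Duhamel's formula $U(t)=e^{t\semiG}U_0+\int_0^t e^{(t-s)\semiG}[N(s)]\,ds$ where $N$ collects all the nonlinear terms ($q(E+\xi\times B)\cdot\na_\xi u$, $\frac q2 E\cdot\xi u$, $\Ga(u,u)$, and the $E\cdot\xi\FM^{1/2}q_1$ term), I would define a time-weighted quantity such as
\begin{equation*}
\CH_m(t)\eqdef\sup_{0\leq s\leq t}(1+s)^{\frac32}\CE_{N,m}(s),
\end{equation*}
and close a nonlinear estimate $\CH_m(t)\lesssim \eps_{N+2,m}+\CH_m(t)^{\frac32}+\dots$ via the standard continuity argument. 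The key mechanism is that \eqref{thm.energy.1} gives $\frac{d}{dt}\CE_{N,m}+\la\CD_{N,m}\leq 0$, and since $\CD_{N,m}$ controls $\CE_{N,m}$ up to the "missing" terms $\|\nu^{1/2}\na_x\FP u\|^2$ and $\|\na_x\times B\|^2$ (the same terms on the right of \eqref{thm.energy.2}), I bound those missing terms using the linearized decay applied to $\na_x\FP u$ and $\na_x B$. The interplay is iterative: decay of $\CE_{N,m}$ feeds the nonlinear source bound in Duhamel, which feeds back a sharper decay; one typically first proves a slow rate (e.g. $(1+t)^{-1/2+\de}$), bootstraps it into the nonlinear integral $\int_0^t(1+t-s)^{-3/4}\|N(s)\|_{Z_1\cap L^2}\,ds$ where $\|N(s)\|$ is quadratic in the already-decaying energy, and upgrades to the optimal $(1+t)^{-3/2}$ for $\CE_{N,m}$. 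For \eqref{thm.ns.3}, the high-order functional $\CE_N^{\rm h}(t)$ satisfies \eqref{thm.energy.2}, so I would run the same argument one derivative up: the extra spatial derivative improves the linear rate to $(1+t)^{-5/4}$ pointwise, hence $(1+t)^{-5/2}$ for the squared norm, and the right-hand side $\|\nu^{1/2}\na_x\FP u\|^2+\|\na_x\times B\|^2$ is itself controlled by the already-established $(1+t)^{-5/2}$ decay of the appropriate pieces plus the nonlinear Duhamel term, which is of higher order. This requires $\eps_{N+5,1}$ small to afford the loss of a few derivatives (two for the low-frequency $L^1$-to-$L^2$ gain via interpolation, more to control high frequencies through the energy) and one velocity weight ($m\geq 1$) because the nonlinear transport term $\xi\times B\cdot\na_\xi u$ costs a factor of $|\xi|$.

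The main obstacle I anticipate is the absence of dissipation for the magnetic field $B$: the dissipation rate $\CD_{N,m}$ in \eqref{def.dNm} contains $\|\pa^\al[E,B]\|^2$ only for $1\leq|\al|\leq N-1$ and $\|E\|^2$ but crucially no zeroth-order $\|B\|^2$ term, so $B$ decays only through the coupling with $E$ and $u$ via the Maxwell equations \eqref{VMB.pM1}--\eqref{VMB.pM2}. At the linearized level this manifests as the degeneracy of the dispersion relation for the electromagnetic modes at low frequency, producing a slower decay rate for $B$ (and possibly only $(1+t)^{-3/4}$ rather than faster for certain components), which is why the theorem states explicit rates for $E$ and $B$ separately and why the time-weighted norm must be carefully designed to accommodate the weakest-decaying component while still closing the nonlinear estimates. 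Handling this requires a precise low-frequency spectral analysis of the $10\times10$ (or appropriately reduced) linearized system — tracking how the hyperbolic Maxwell part interacts with the parabolic-like Boltzmann dissipation on the macroscopic variables — and is the technical heart on which the whole bootstrap rests; I expect this to be borrowed or adapted from the linearized VMB analysis rather than reproved from scratch, with the nonlinear part then being a (lengthy but conceptually standard) refinement of \cite{DS-VPB}.
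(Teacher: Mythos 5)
Your overall architecture (nonlinear energy inequalities plus linearized decay plus Duhamel plus a time-weighted sup quantity) is the paper's strategy, but there is a concrete gap at the step where you claim that $\CD_{N,m}$ controls $\CE_{N,m}$ up to the ``missing'' terms $\|\nu^{1/2}\na_x\FP u\|^2$ and $\|\na_x\times B\|^2$. Both of those terms are in fact already contained in the dissipation \eqref{def.dNm}, which includes all $u$-derivatives of order $1\le|\al|\le N$ and all $[E,B]$-derivatives of order $1\le|\al|\le N-1$ (you have conflated them with the right-hand side of the high-order inequality \eqref{thm.energy.2}, which is a different issue). What is genuinely missing from $\CD_{N,m}$ relative to $\CE_{N,m}$ are the zeroth-order quantities $\|\FP u\|^2$ and $\|B\|^2$ together with the top-order electromagnetic derivatives $\sum_{|\al|=N}\|\pa^\al[E,B]\|^2$. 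The zeroth-order part can indeed be recovered by Duhamel and the linearized decay, as you propose; but the $|\al|=N$ terms cannot be produced by any linearized estimate without paying derivatives beyond $N$ (this is exactly the regularity-loss feature of the Maxwell coupling), so your closure $\CH_m(t)\lesssim\eps_{N+2,m}+\CH_m(t)^{3/2}+\dots$ does not follow from \eqref{thm.energy.1} as written. The paper's device for this is the iterated time-weighted estimate: one uses $\CE_{N,m}(t)\le C(\|\FP u(t)\|^2+\|B(t)\|^2+\CD_{N+1,m}(t))$ and multiplies the Lyapunov inequalities at the three levels $N$, $N+1$, $N+2$ by the weights $(1+t)^{\ell}$, $(1+t)^{\ell-1}$, $1$ with $1<\ell<2$ (eventually $\ell=\tfrac32+\eps$), iterating to obtain \eqref{thm.ns.p1}; the two extra derivatives in the hypothesis $\eps_{N+2,m\vee1}$ are consumed precisely here, not in the frequency splitting. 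Nor can you bypass this by proving decay of the full $\CE_{N,m}$ directly from Duhamel: Theorem \ref{thm.ls} only controls unweighted spatial-derivative norms, while $\CE_{N,m}$ contains $\xi$-derivatives and $\nu^{m/2}$ weights, so the energy inequality must do the transfer and the top-order $[E,B]$ terms must be dealt with as above.

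Two further points would need repair. First, in the Duhamel step the paper splits the source as $\sourceG=\sourceG_1+\sourceG_2$ with $\FP\sourceG_1=0$ and $\sourceG_2=\FP(\cdots)$, because the inhomogeneous estimate \eqref{thm.ls.2} requires a microscopic source and, just as importantly, carries the $\nu^{-1/2}$ weight that absorbs the $|\xi|$-growth of $E\cdot\xi u$ and $(E+\xi\times B)\cdot\na_\xi u$ using only one velocity weight (Lemma \ref{lem.h1h2}, hence $m\vee 1$); feeding the whole nonlinearity through the unweighted homogeneous estimate, as your sketch suggests, would require more velocity weights than the hypothesis supplies. Also, the high-frequency part of the linear evolution decays only polynomially, at rate $(1+t)^{-\ell}$ paid for by $\ell$ extra derivatives, not exponentially. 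Second, for \eqref{thm.ns.3} the right-hand side of \eqref{thm.energy.2} is not ``already established'' to decay like $(1+t)^{-5/2}$: part one gives only $(1+t)^{-3/2}$ for $\|\nu^{1/2}\na_x\FP u\|^2+\|\na_x\times B\|^2$. The paper instead proves the fresh bound $\|\na_x U(t)\|_{\CH^{N-1}}^2\le C\eps_{N+5,1}(1+t)^{-5/2}$ by applying the linearized decay at the one-derivative level to the Duhamel representation, with $N+3$ spatial derivatives of the sources (whence $\eps_{N+5,1}$), converts the $|\al|=N$ time derivatives of $[E,B]$ into spatial ones by the induction \eqref{time.d.claim}, and then closes with Gronwall from $\frac{d}{dt}\CE_N^{\rm h}+\la\CE_N^{\rm h}\le C\|\na_x U\|_{\CH^{N-1}}^2$; some version of these steps is needed in your argument as well.
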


Furthermore,
we have the optimal decay rate in the $L^r$ norm for any $2 \le r \le \infty$:

\begin{corollary}\label{cor}
Let $[u(t,x,\xi),E(t,x),B(t,x)]$  be the
solution to   the Cauchy problem \eqref{VMB.pB},
\eqref{VMB.pM1}, \eqref{VMB.pM2}, \eqref{VMB.pM3}, and \eqref{VMB.pID} of
the Vlasov-Maxwell-Boltzmann system obtained in
Proposition \ref{prop.exi}. Suppose that $\eps_{9,1}$ and $\eps_{8,2}$ are sufficiently small, then
 for any $2\leq r\leq \infty$, we have the following estimates
 for any $t\geq 0$:
\begin{align}
&\|u(t)\|_{Z_r}+\|B(t)\|_{L^r_x}\leq C(1+t)^{-\frac{3}{2}+\frac{3}{2r}},\label{cor.1}\\
&\|\{\FI-\FP\}u(t)\|_{Z_r}+\|\langle[1,-1]\FM^{1/2},u(t)\rangle\|_{L^r_x}+\|E(t)\|_{L^r_x}\leq C(1+t)^{-\frac{3}{2}+\frac{1}{2r}}.\label{cor.2}
\end{align}
\end{corollary}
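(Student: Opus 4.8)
The plan is to deduce Corollary \ref{cor} from the energy-decay estimates in Theorem \ref{thm.ns} by an interpolation argument between $L^2_x$ and the (essentially) $L^\infty_x$ bounds that the Sobolev embedding $H^2(\R^3_x)\hookrightarrow L^\infty(\R^3_x)$ provides. First I would record the two endpoint cases. For $r=2$, the estimate $\|u(t)\|_{Z_2}^2\le\CE_{N,0}(t)\le C\eps_{N+2,0}(1+t)^{-3/2}$ from \eqref{thm.ns.2} (with $m=0$) gives $\|u(t)\|_{Z_2}+\|B(t)\|_{L^2_x}\le C(1+t)^{-3/4}$, which is $(1+t)^{-3/2+3/(2r)}$ at $r=2$; similarly $\|\{\FI-\FP\}u(t)\|_{Z_2}$, the macroscopic moment $\|\langle[1,-1]\FM^{1/2},u\rangle\|_{L^2_x}$ and $\|E(t)\|_{L^2_x}$ are all controlled by $\CD_{N,0}(t)$-type quantities, which \eqref{thm.ns.3} (via $\CE_N^{\rm h}(t)$, whose dissipation includes $\|E\|^2$, the high derivatives, and the microscopic part) bounds by $C\eps_{N+5,1}(1+t)^{-5/2}$; taking square roots yields $(1+t)^{-5/4}=(1+t)^{-3/2+1/(2\cdot2)}$ at $r=2$. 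For the $L^\infty_x$ endpoint I would use $\|g\|_{L^\infty_x}^2\le C\|g\|_{L^2_x}\|\na_x^2 g\|_{L^2_x}\le C\sum_{|\al|\le 2}\|\na_x^\al g\|_{L^2_x}^2$ or simply $H^2\hookrightarrow L^\infty$, then integrate in $\xi$ (using Minkowski's inequality to pull the $L^2_\xi$ norm inside) to get $\|u(t)\|_{Z_\infty}^2\le C\sum_{|\al|\le2}\|\pa^\al u(t)\|^2\le C\CE_{N,0}(t)$ since $N\ge4$, hence $\|u(t)\|_{Z_\infty}\le C(1+t)^{-3/4}$; but this is only the rate $(1+t)^{-3/4}$, not the claimed $(1+t)^{-3/2}$. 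So the naive embedding is lossy and one must instead interpolate the low derivatives against the high-order functional.

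The sharp argument is: for $2\le r\le\infty$ write the Gagliardo--Nirenberg/interpolation inequality
\begin{equation*}
\|g\|_{L^r_x}\le C\|g\|_{L^2_x}^{3/r-1/2}\,\|\na_x g\|_{L^2_x}^{3/2-3/r},
\end{equation*}
valid in three dimensions for this range of $r$ (and likewise with $\|g\|_{L^2_x}$, $\|\na_x^2 g\|_{L^2_x}$ if one prefers), apply it with $g=u(t,\cdot,\xi)$, square, integrate in $\xi$ and use Hölder in $\xi$ with the conjugate exponents $\tfrac{1}{3/r-1/2}$ and so on — more cleanly, use Minkowski to get $\|u\|_{Z_r}\le C\|u\|_{Z_2}^{3/r-1/2}\|\na_x u\|_{Z_2}^{3/2-3/r}$. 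Then plug in $\|u(t)\|_{Z_2}\le C(1+t)^{-3/4}$ from $\CE_{N,0}$ and $\|\na_x u(t)\|_{Z_2}\le C(1+t)^{-5/4}$ from $\CE_N^{\rm h}(t)$ (note $\CE_N^{\rm h}$ controls $\sum_{1\le|\al|\le N}\|\pa^\al u\|^2$, hence $\|\na_x u\|^2$, and decays like $(1+t)^{-5/2}$ by \eqref{thm.ns.3}). This gives
\begin{equation*}
\|u(t)\|_{Z_r}\le C(1+t)^{-\frac34(3/r-1/2)-\frac54(3/2-3/r)}=C(1+t)^{-\frac32+\frac{3}{2r}},
\end{equation*}
after simplifying the exponent, which is exactly \eqref{cor.1}; the same interpolation applied to $B(t)$ with $\|B(t)\|_{L^2_x}\le C(1+t)^{-3/4}$ and $\|\na_x B(t)\|_{L^2_x}\le C(1+t)^{-5/4}$ (again from $\CE_{N,0}$ and $\CE_N^{\rm h}$) yields the bound on $\|B(t)\|_{L^r_x}$. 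For \eqref{cor.2} I would run the identical interpolation on $\{\FI-\FP\}u(t)$, on the macroscopic moment $\langle[1,-1]\FM^{1/2},u(t)\rangle$, and on $E(t)$, but now using that \emph{both} the $L^2_x$ norm \emph{and} its gradient of each of these quantities enter the \emph{dissipation} rate and hence decay one half-power faster: from $\CE_N^{\rm h}(t)$ and \eqref{thm.ns.3}, $\|\{\FI-\FP\}u(t)\|_{Z_2}$, $\|E(t)\|_{L^2_x}$, $\|\langle[1,-1]\FM^{1/2},u(t)\rangle\|_{L^2_x}\le C(1+t)^{-5/4}$, while their gradients are controlled by still-higher-order quantities giving $(1+t)^{-5/4}$ or better, so interpolation produces the decay $(1+t)^{-3/2+1/(2r)}$; the smallness hypotheses $\eps_{9,1}$, $\eps_{8,2}$ small are exactly what is needed so that Theorem \ref{thm.ns} applies with $N=4$ (so $N+5=9$, $N+2=\ldots$) for the relevant values of $m$.

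The main obstacle is bookkeeping the precise decay rate available for each first-order spatial derivative of each component ($u$, $\{\FI-\FP\}u$, the conserved moment, $E$, $B$), making sure in each case that the factor raised to the power $3/2-3/r$ genuinely decays like $(1+t)^{-5/4}$ (so that the interpolated exponent comes out to the claimed value rather than something weaker), and checking that the index budget $N\ge4$ together with the stated smallness of $\eps_{9,1}$ and $\eps_{8,2}$ covers all the energy functionals invoked — in particular that $\CE_N^{\rm h}(t)$ really does control $\|\na_x B(t)\|^2$ and $\|\na_x E(t)\|^2$ (it controls $\sum_{1\le|\al|\le N}\|\pa^\al[E,B]\|^2$) and that \eqref{thm.ns.3} is applicable. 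Once these rate tables are assembled, the corollary follows by a one-line interpolation in each case; there is no genuinely new analytic difficulty beyond Theorem \ref{thm.ns} itself.
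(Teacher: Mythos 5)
Your $r=2$ endpoints and your exponent bookkeeping are fine, and for $2\leq r\leq 6$ the one-derivative interpolation you propose does work with the rates $\|u(t)\|\lesssim (1+t)^{-3/4}$ from \eqref{thm.ns.2} and $\|\na_x u(t)\|\lesssim (1+t)^{-5/4}$ from \eqref{thm.ns.3}. But the argument has a genuine gap for $6<r\leq\infty$, which is precisely the hard part of the corollary. The inequality $\|g\|_{L^r_x}\leq C\|g\|_{L^2_x}^{3/r-1/2}\|\na_x g\|_{L^2_x}^{3/2-3/r}$ is a Gagliardo--Nirenberg inequality only for $2\leq r\leq 6$ in three dimensions; at $r=\infty$ the exponent $3/r-1/2$ is negative and no such bound exists ($H^1(\R^3_x)\not\hookrightarrow L^\infty(\R^3_x)$), so you cannot reach the endpoint this way. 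The natural repair, $\|g\|_{L^\infty_x}\leq C\|\na_x g\|^{1/2}\|\na_x^2 g\|^{1/2}$ as in \eqref{lem.semi.inf.p1}, does not save the argument either: Theorem \ref{thm.ns} gives the \emph{nonlinear} solution only $\|\na_x u(t)\|,\|\na_x^2 u(t)\|\lesssim (1+t)^{-5/4}$ (all derivatives of order $\geq 1$ sit in the same functional $\CE_N^{\rm h}$, which decays at the single rate $(1+t)^{-5/2}$), so every Sobolev-type interpolation of the nonlinear energy rates tops out at $(1+t)^{-5/4}$ in $L^\infty_x$, short of the claimed $(1+t)^{-3/2}$. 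You noticed the naive $H^2_x\hookrightarrow L^\infty_x$ route is lossy, but your substitute inherits essentially the same loss once $r>6$. There is no statement in the paper giving $\|\na_x^2 u(t)\|\lesssim(1+t)^{-7/4}$ for the nonlinear solution, so the missing half power cannot be produced from the energy functionals alone.

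What the paper does instead at the $r=\infty$ endpoint is return to the Duhamel representation \eqref{decom.U}: Lemma \ref{lem.semi.inf} proves the semigroup bound $\|\semiG(t)U_0\|_{\CZ_\infty}\leq C(1+t)^{-3/2}(\|U_0\|_{\CZ_1}+\|\na_x^3U_0\|_{\CH^3})$ by applying \eqref{lem.semi.inf.p1} to the \emph{linear} flow, where \eqref{thm.ls.1} really does give the differentiated rates $(1+t)^{-5/4}$ and $(1+t)^{-7/4}$; then the nonlinear source in the Duhamel integral is controlled by $\|\sourceG(s)\|_{L^2_\xi(H^6_x)\cap Z_1}\leq C\CE_{6,2}(s)\leq C\eps_{8,2}(1+s)^{-3/2}$ via Lemma \ref{lem.h1h2} and \eqref{thm.ns.2}. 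This yields \eqref{dr.inf.0}, and \eqref{cor.1}, \eqref{cor.2} then follow by plain $L^2$--$L^\infty$ interpolation against the $r=2$ rates $(1+t)^{-3/4}$ and $(1+t)^{-5/4}$. Note that this is exactly where the hypothesis on $\eps_{8,2}$ enters (the velocity weight $m=2$ is needed to control the $\xi$-growth in $\sourceG$); your proposal never uses $\eps_{8,2}$, which is a symptom of the missing Duhamel/linear-decay step. To complete your proof you would need to add this bootstrap through the mild formulation (or otherwise establish genuinely faster decay of second spatial derivatives of the nonlinear solution), not merely refine the interpolation.
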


\begin{remark} {
From Theorem \ref{thm.ls}, which proves the time decay rates of the linearized system, the solution $[u,E,B]$
to the linearized homogeneous Vlasov-Maxwell-Boltzmann system satisfies
\begin{equation}\label{re.c1}
    \|\na_x^m u(t)\|+\|\na_x^m[E(t),B(t)]\|\leq C(1+t)^{-\frac{3}{4}-\frac{m}{2}},
\end{equation}
for any $t\geq 0$, where $m\geq 0$ is an integer, $C$ depends on the initial data, and the initial data is smooth enough  and decays fast enough in $x,\xi$ at infinity. By applying further the optimal Sobolev inequality as in \eqref{lem.semi.inf.p1} and interpolation, \eqref{re.c1} implies
\begin{equation*}
  \|u(t)\|_{Z_r}+\|[E(t),B(t)]\|_{L^r_x}\leq C(1+t)^{-\frac{3}{2}+\frac{3}{2r}}
\end{equation*}
for any $t\geq 0$, where  $2\leq r\leq \infty$. Thus, for the nonlinear system, the time decay rate in \eqref{cor.1} for $u$ and $B$ is the same as the above one in the linearized case, and the faster time decay rate in \eqref{cor.2} for $\{\FI-\FP\}u$, $\langle[1,-1]\FM^{1/2},u(t)\rangle$ and $E(t)$ is due to the special structure of the Vlasov-Maxwell-Boltzmann system. On the other hand, when the electromagnetic field disappears, the spectral analysis as in \cite{UY-AA} for the one-species Boltzmann equation in the hard sphere case shows
\begin{equation*}
    \|\na_x^m e^{t(-\xi\cdot \na_x +\FL)}u_0\|\leq C (1+t)^{-\frac{3}{2}(\frac{1}{p}-\frac{1}{2})-\frac{m}{2}}(\|u_0\|_{Z_p}+\|\na_x^m u_0\|)
\end{equation*}
for any $t\geq 0$, where $m\geq 0$ is an integer and $1\leq p\leq 2$. Thus, the time decay rate of $u$ in  \eqref{cor.1} given by
\begin{equation*}
  \|u(t)\|_{Z_r}\leq  C(1+t)^{-\frac{3}{2}+\frac{3}{2r}},\ \ 2\leq r\leq \infty,
\end{equation*}
is optimal in the sense that it is the same as in the cases of the Boltzmann equation without any force and of the linearized Vlasov-Maxwell-Boltzmann system.}
\end{remark}

Notice here that the electric field  decays faster than the magnetic field in $L^r_x$  with $2 \le r < \infty$.
Indeed recall that the pure homogeneous Maxwell's equations usually conserves the energy.  Thus the decay of these terms is truly a non-linear effect which results from the coupling with the Boltzmann equation.  {The same feature has been observed in \cite{Du-EM} for a study of the Euler-Maxwell system with relaxation, where the Green's function of the linearized system is presented in detail and thus leads to the optimal large time behavior of each component in the solution.}


\subsection{Literature, new ideas, and future possibilities}
It was pointed out by Villani in
\cite{Vi}, that there exist general structures in which the interaction
between a conservative part and a degenerate dissipative part will lead
to convergence to equilibrium; this property has been called {\it
hypocoercivity}. This paper provides a concrete
example of  hypocoercivity for the nonlinear two-species Vlasov-Maxwell-Boltzmann system in the framework of perturbations. We notice that the general theoretical framework in \cite{Vi} can not be directly applied here. On the other hand, we hope to further develop the hypocoercivity theory in the future to include some degenerately dissipative kinetic equation coupled with a class of hyperbolic systems.

There has been extensive investigations on the rate of convergence
for the nonlinear Boltzmann equation or related spatially
non-homogeneous kinetic equations with relaxations. In what follows
let us mention some of them. In the context of perturbed solutions,
the first result was given by Ukai \cite{Ukai-1974}, where the
spectral gap analysis was used to obtain the exponential rates for the
Boltzmann equation with hard potentials on torus. The results in
\cite{Ukai-1974} were improved by Ukai-Yang \cite{UY-AA} in order to
consider existence of time-periodic states in the presence of
time-periodic sources, which was later extended by
Duan-Ukai-Yang-Zhao \cite{DUYZ} to the case with time-periodic
external forcing by using the  energy-spectrum method; see also \cite{DUY} for a summary of general applications of the  energy-spectrum method when some given small external forcing occurs.
Recently, Strain-Guo \cite{SG} developed a weighted energy method to
get the exponential rate of convergence for the Boltzmann equation
and Landau equation with soft potentials on the torus.  Earlier but
along the same line of research, Strain-Guo  \cite{SG0} developed a
general theory of polynomial decay rates up to any order in a unified
framework and applied it to four kinetic equations, the
Vlasov-Maxwell-Boltzmann System, the relativistic Landau-Maxwell
System, the Boltzmann equation with cutoff soft-potentials and the
Landau equation all on the torus.

Another  tool is entropy method which has  general
applications in the existence theory for nonlinear equations. By
using this method as well as the elaborate analysis of  functional
inequalities, time-derivative estimates  and interpolation,
Desvillettes-Villani \cite{DV} obtained first the almost exponential
rate of convergence of solutions to the Boltzmann equation on torus
with soft potentials for large initial data under the additional
regularity conditions that all the moments of $f$ are uniformly
bounded in time and $f$ is bounded in all Sobolev spaces uniformly
in time. See Villani \cite{Vi} for extension and simplification of
results in \cite{DV} still conditionally to smoothness bounds by
further designing a new auxiliary functional. Notice that \cite{SG0}
provided a very simple proof of \cite{DV} for the unconditional
perturbative regime. Recently, by finding some proper Lyapunov
functional defined over the Hilbert space, Mouhot-Neumann \cite{MN}
obtained the exponential rates of convergence for some kinetic
models with general structures in the case of torus; see also
\cite{Vi} for the  general study. An extension of \cite{MN} to
models with a single mass conservation law in the presence of some confining potential forces
was given by
Dolbeault-Mouhot-Schmeiser \cite{DMS}. Recently, Duan \cite{D-Hypo} developed a general method to deal with a class of linear degenerately dissipative kinetic equations with or without confining forces even when several physical conservation laws are present, specifically the linear Boltzmann equation with a parabolic confining force included. We remark that some ideas in \cite{D-Hypo} for finding the dissipation of all the macroscopic components of the kinetic equation  will be essentially used in the proof of our main results of this paper.

Besides those methods mentioned above for the study of rates of
convergence, the method of Green's functions was also founded by
Liu-Yu \cite{Liu-Yu-1DG} to expose the pointwise large-time behavior
of solutions to the Boltzmann equation in the full space $\R^3$.

In addition, concerning the Vlasov-Poisson-Boltzmann system, Glassey-Strauss \cite{GS-DCDS} studied the
essential spectra of the solution operator and they \cite{GS-TTSP} also used the method of thirteen moments by Kawashima \cite{Ka-BE13} to obtain the exponential time-decay rates of solutions on torus. A slow time-decay rate for the solution in the whole space was obtained by  {Yang-Zhao} \cite{YZ-CMP} on the basis of the pure energy method and a time differential inequality. Notice the series of works \cite{Duan,D-09PD,DY-09VPB}  investigating solution  spaces without initial layer.
Then Duan-Strain \cite{DS-VPB} obtained the optimal time-decay rates of solutions to the one-species Vlasov-Poisson-Boltzmann system in $\R^3$. The key of the method to study
hypocoercivity provided by  \cite{DS-VPB} is to carefully capture the
full dissipation of the perturbed macroscopic system of equations
with the hyperbolic-parabolic structure, which is in the same spirit
of the Kawashima's work \cite{Ka}. For the linearized time-decay  analysis, instead of using the compensation function as in \cite{Ka,Ka-BE13}, the main idea of \cite{DS-VPB,D-Hypo} is to design some interactive functionals in order to take care of the dissipation of the degenerate part in the solution.

Here, we mention that if there is no collisional effects as in
 the Vlasov-Poisson
system, then the so-called Landau damping comes out. This was
recently studied by Mouhot-Villani \cite{MV} on torus, where it was
shown that even though in the absence of kinetic relaxation, in the
analytic regime the solution still converges weakly  to
 large-time states determined by the initial data and the
nonlinear system itself.

For the more intricate Vlasov-Maxwell-Boltzmann system, which includes the hyperbolic coupling, as mentioned before, rapid polynomial time decay up to any order on the torus (for the  solutions from \cite{GuoVMB}) was shown  in Strain-Guo \cite{SG0}. Different from the case of torus, solutions in the whole space are dispersive and hence only the slower polynomial time decay is expected as in \cite{DS-VPB}. Moreover, because of the weaker dissipative property of the Maxwell equations than in the case of the Poisson equation \cite{DS-VPB}, it is difficult to study the large-time behavior of solutions to the  Vlasov-Maxwell-Boltzmann system in the whole space.

Finally, we also mention some of  {the} results on the existence theory of
the simpler Vlasov-Poisson-Boltzmann system and related kinetic equations: global existence of
renormalized weak solutions with large initial data was studied in
\cite{DL.BE,DL-CPAM,Mischler}, global existence of classical
solutions near Maxwellians can be found in \cite{Liu-Yu-Shock,Liu-Yang-Yu,YYZ-ARMA},
\cite{Guo2,Guo-IUMJ,S.VMB} and \cite{Duan,DY-09VPB}, and global
existence of  solutions near vacuum was shown in \cite{Guo3,DYZ-DCDS}.
We anticipate that the time derivatives may be removed from the existence theory \cite{S.VMB}
and our main results herein using methods from \cite{J.DIFF,Duan}.

\medskip

To obtain the results from this paper in Theorem \ref{thm.energy}, Theorem \ref{thm.ns}, and Corollary \ref{cor} the starting point is our first paper \cite{DS-VPB} and the existence theorem from \cite{S.VMB}.  There are additionally several new ingredients which we expect can be useful in a number of other contexts, some of which we list after explaining the key new elements in our proof just now.  First of all, from the hyperbolic nature of Maxwell equations it seems to be necessary to lose a derivative in estimates of the dissipation rate for the electro-magnetic field.  This makes it very difficult to prove time decay without paying the price of extra derivatives.  This problem is also present in the case of the torus \cite{SG0}, but it is much more problematic in the whole space because the dissipation is necessarily
dramatically weaker \cite{S.VMB}.    To surmount this difficulty in the whole space, we first spend some time developing a new pointwise time-frequency Lyapunov functional which measures directly the weak dissipation (and spatial derivative loss) of the full Vlasov-Maxwell-Boltzmann system; this is given in Theorem \ref{thm.tfli}.  Here new estimates are developed to measure the macroscopic dissipation and the very weak electro-magnetic dissipation in the whole space by introducing two interactive time-frequency functionals,
$$
\CE_{\rm int}^{(1)}(t,k), \ \ \CE_{\rm int}^{(2)}(t,k),
$$
which are defined by \eqref{def.int1} and \eqref{def.int2}, respectively. After this we can  make use of the usual frequency splitting method in Section \ref{sec.tf} in order to take advantage of the weak dissipation and prove the linear decay which is stated in Theorem \ref{thm.ls}.
We furthermore discuss a new time-frequency (time-derivative) splitting method in Remark \ref{time.f.m} which is related to the general derivative loss  {phenomena}.
For the low frequency part, the solution decays like the heat kernel, while for the high  frequency part, it decays in time with the algebraic rates of any order as long as initial data is regular enough in space variable. We are optimistic that this approach can be useful in other hyperbolic or mixed hyperbolic-parabolic systems where derivative loss is present.  {For the study of some macroscopic systems with this kind of regularity-loss property, we here mention the recent work \cite{IHK,IK,UWK} by Kawashima and his collaborators.} Another new point is to introduce the time-weighted method to handle the time-decay rates of the full instant energy functional, which seems necessary because of the regularity-loss property of the electromagnetic field.
Furthermore, to obtain the optimal time decay rates in $L^r_x$ with $2\le r \le \infty$ for the separate components of the solution as stated in Corollary \ref{cor}  we use  an optimized Sobolev inequality as in \eqref{lem.semi.inf.p1}.

To finish we mention some expected future applications of these methods.
As a result of scaling and the Lorentz invariant Maxwell equations, a very physically relevant collision kernel to apply when the kinetic equation is coupled with its  internally generated electromagnetic forces is the relativistic Landau-Maxwell system, for which the existence theory is known from \cite{SG1}.  We expect that with a suitable relativistic modification of the moment equations in Section \ref{sec.me},
then the methods in this paper and \cite{DS-VPB} can be used to observe that
 the all of results herein with the same conclusions can be obtained for the relativistic Landau-Maxwell system in the whole space.  The details of this approach  would be quite complicated and interesting to carry out.

Additionally we expect that the methods developed in this paper and \cite{DS-VPB} can be useful in several other physical contexts.  In particular one should be able to combine the methods from \cite{SG0} with our methods here to obtain optimal convergence rates for the relativistic Boltzmann equation with soft potentials \cite{S.rB.soft} in the whole space.  More recently, a global existence theory has been developed for perturbative solutions to the Boltzmann equation without the angular cut-off assumption for the full range of inverse power-law potentials on the torus \cite{GS.B.NCA,GS.B.NC1}.   The methods used there yield exponential decay for hard and moderately soft potentials ($\gamma + 2s \ge 0$), as well as ``almost exponential decay'' for the softest potentials.  The methods developed in this paper and \cite{SG0,DS-VPB} have also been quite useful in obtaining the optimal decay rates in the whole space for the these solutions \cite{GS.B.NCA,GS.B.NC1} for all of the hard and soft-potentials
 {\cite{S.B.NC0}.}   {Finally, we would like to mention that in the direction of the non cut-off Boltzmann equation, a study about the qualitative properties of classical solutions, precisely, the full regularization in all variables, uniqueness, non-negativity and the convergence rates to the equilibrium, has been also provided by
Alexandre, Morimoto, Ukai, Xu, and Yang
 in \cite{AMUXY1} and \cite{AMUXY2} (and the references therein), where the optimal time decay rates for the hard potentials are obtained by using the compensation function method developed in \cite{Ka-BE13}.}

\subsection{Organization of the paper}
In Section \ref{sec.me}, we define the macroscopic projector and derive some moment equations up to third-order. In Section \ref{sec.decayl}, we prove Theorem \ref{thm.ls} about the time-decay property of solutions to the linearized Vlasov-Maxwell-Boltzmann system with microscopic source terms.  This is based on Theorem \ref{thm.tfli} for the construction of a time-frequency Lyapunov functional resulting from estimates on the microscopic dissipation, macroscopic dissipation and electromagnetic dissipation  proved in Section \ref{sec.sub.tfli}. Then, the proof of Theorem \ref{thm.ls} is given in Section \ref{sec.tf}. In Section \ref{sec.energy}, we study some nonlinear energy estimates including the time evolution of the equivalent velocity-weighted instant total energy and the high-order energy to finish the proof of Proposition \ref{prop.exi} and Theorem \ref{thm.energy}.
Then in Section \ref{sec.decayNL}, we use the decay of the linearized solution with nonhomogeneous source terms together with those nonlinear energy estimates to bootstrap and obtain the nonlinear decay. In Section \ref{sec.decayNL.1} and Section \ref{sec.decayNL.2}, we prove Theorem \ref{thm.ns}. Finally in Section \ref{sec.decayNL.3}, we prove Corollary \ref{cor}.

\section{Moment equations}\label{sec.me}

In this section, we begin with the representation of the macroscopic
projector $\FP$ and then derive from the perturbed system some
macroscopic balance laws and high-order moment equations which are
systems
 {of first order} hyperbolic equations of the macroscopic
coefficient functions coupled with the high-order moment functions.

Given any $u(t,x,\xi)$, one can write $\FP$ in \eqref{coerc} as
\begin{multline}
 \FP u=a_+(t,x)[1,0]\FM^{1/2}+a_-(t,x)[0,1]\FM^{1/2}
    \\
    +\sum_{i=1}^3b_i(t,x)[1,1]\xi_i\FM^{1/2}+c(t,x)[1,1](|\xi|^2-3)\FM^{1/2},\label{form.p}
\end{multline}
since $\FP$ is a projection from $L^2_\xi\times L^2_\xi$ to $\CN$, where the coefficient functions $a_\pm(t,x)$, $b(t,x)\equiv [b_1(t,x),b_2(t,x),b_3(t,x)]$ and $c(t,x)$ depend on $u(t,x,\xi)$. The expression \eqref{form.p} can then be rewritten as
$\FP u=[\FP_+ u, \FP_- u]$ with
\begin{equation}  \notag
    \FP_\pm u= \{a_\pm(t,x)+b(t,x)\cdot \xi+c(t,x)(|\xi|^2-3)\}\FM^{1/2}.
\end{equation}
Since the projection $\FP$ is orthogonal we have
\begin{equation*}
     \int_{\R^3} \psi(\xi)\cdot \{\FI-\FP\} u ~ d\xi=0,\ \ \forall\,\psi = [\psi_+, \psi_-]\in
    \CN,
\end{equation*}
which together with the
form \eqref{form.p} of $\FP$ imply
\begin{equation}
\notag
\begin{split}
  \dis   & a_\pm= \langle \FM^{1/2}, u_\pm\rangle= \langle \FM^{1/2}, \FP_\pm u\rangle,
  \\
  \dis  & b_i=\frac{1}{2}\langle \xi_i \FM^{1/2}, u_++u_-\rangle
=\langle \xi_i \FM^{1/2},\FP_\pm u\rangle,
\\
  \dis & c= \frac{1}{12}\langle (|\xi|^2-3) \FM^{1/2}, u_+ + u_-\rangle
= \frac{1}{6}\langle (|\xi|^2-3) \FM^{1/2}, \FP_\pm u\rangle.
    \end{split}
\end{equation}
In the rest of this section we will derive the equations for these macroscopic variables and also the high-order moments as follows.

First consider the following linearized system with a non-homogeneous source $\sourceG=[\sourceG_+(t,x,\xi),\sourceG_-(t,x,\xi)]$:
\begin{equation}\label{lsns}
    \left\{\begin{array}{l}
  \dis     \pa_t u_\pm+\xi\cdot\na_x u_\pm \mp E\cdot \xi \FM^{1/2}=\FL_\pm u +\sourceG_\pm,\\
 \dis \pa_t E-\na_x\times B=-\int_{\R^3} \xi \FM^{1/2}(u_+-u_-)d\xi,\\
\dis \pa_t B+\na_x\times E =0,\\
\dis\na_x\cdot E =\int_{\R^3} \FM^{1/2} (u_+-u_-)d\xi,\ \ \na_x \cdot
B=0.
    \end{array}\right.
\end{equation}
Taking velocity integrations of \eqref{lsns}$_1$ with respect to the velocity moments
\begin{equation}
\notag
    \FM^{1/2},\ \  \xi_i  \FM^{1/2}, i=1,2,3,\ \  \frac{1}{6}(|\xi|^2-3)\FM^{1/2},
\end{equation}
one has
\begin{eqnarray}
&&\quad \pa_t a_\pm +\na_x\cdot b +\na_x \cdot \langle \xi
\FM^{1/2},\{\FI_\pm-\FP_\pm\} u\rangle=\langle\FM^{1/2},\sourceG_\pm\rangle,\label{m0}\\
&& \pa_t [b_i+  \langle \xi_i \FM^{1/2},\{\FI_\pm-\FP_\pm\} u\rangle
]+\pa_i (a_\pm+2c)\mp E_i
\notag
\\
&&\qquad +\na_x\cdot \langle \xi\xi_i
\FM^{1/2},\{\FI_\pm-\FP_\pm\} u\rangle=\langle\xi_i  \FM^{1/2},\sourceG_\pm {+\FL_\pm u}\rangle,\label{m1}\\
&&\pa_t \left[c+ \frac{1}{6}\langle (|\xi|^2-3)\FM^{1/2},\{\FI_\pm-\FP_\pm\}
u\rangle
\right]+  \frac{1}{3} \na_x\cdot b
\notag
\\
&&\qquad +
\frac{1}{6}\na_x\cdot \langle
 (|\xi|^2-3)\xi\FM^{1/2},\{\FI_\pm-\FP_\pm\} u\rangle
 =\frac{1}{6}\langle (|\xi|^2-3)\FM^{1/2},\sourceG_\pm  {+ \FL_\pm u}\rangle,
 \label{m2}
\end{eqnarray}
where we have set $\FI=[\FI_+,\FI_-]$ with $\FI_\pm u=u_\pm$. Define the high-order moment
 functions $\highG(u_\pm)=(\highG_{ij}(u_\pm))_{3\times 3}$ and
$\highB(u_\pm)=(\highB_1(u_\pm),\highB_2(u_\pm),\highB_3(u_\pm))$ by
\begin{equation}
  \highG_{ij}(u_\pm) = \langle(\xi_i\xi_j-1)\FM^{1/2}, u_\pm\rangle,\ \
  \highB_i(u_\pm)=\frac{1}{10}\langle(|\xi|^2-5)\xi_i\FM^{1/2},
  u_\pm\rangle.\label{def.gala}
\end{equation}
Further taking velocity integrations of \eqref{lsns}$_1$ with respect to the above high-order
moments one has
\begin{eqnarray}
&&\pa_t [\highG_{ii}( \{\FI_\pm-\FP_\pm\}u)+2c]+2\pa_i b_i
=\highG_{ii}(l_{\pm}+\sourceG_\pm),\label{m2ii}\\
&&\pa_t \highG_{ij}( \{\FI_\pm-\FP_\pm\}u) +\pa_j b_i+\pa_i b_j +\na_x\cdot
\langle \xi\FM^{1/2},\{\FI_\pm-\FP_\pm\} u\rangle
\notag
\\
&&\hspace{4cm}=\highG_{ij}(l_{\pm}+\sourceG_\pm)+\langle\FM^{1/2},\sourceG_\pm\rangle,\ \ i\neq j,\label{m2ij}\\
&& \pa_t \highB_i( \{\FI_\pm-\FP_\pm\}u)+\pa_i c=\highB_i(l_{\pm}+\sourceG_\pm),\label{m3}
\end{eqnarray}
where
\begin{equation}\label{def.l}
l_\pm =-\xi \cdot \na_x \{\FI_\pm-\FP_\pm\}u+\FL_\pm u.
\end{equation}
Here we  used the moment values of the normalized
global Maxwellian $\FM$:
\begin{eqnarray*}
&&\langle 1, \FM\rangle=1, \ \
\langle |\xi_j|^2, \FM\rangle=1,\ \ \langle |\xi|^2, \FM\rangle=3,\\
&&\langle |\xi_j|^2|\xi_m|^2, \FM\rangle=1, \ \ j\neq m,\\
&&\langle |\xi_j|^4, \FM\rangle=3,\ \ \langle |\xi|^2|\xi_j|^2,
\FM\rangle=5.
\end{eqnarray*}
Additionally to derive \eqref{m2ij} we have used \eqref{m0}.

In particular, for the nonlinear system  {\eqref{VMB.pB}-\eqref{VMB.pM3}}, the non-homogeneous source $\sourceG=[\sourceG_+(t,x,\xi),\sourceG_-(t,x,\xi)]$ takes the form of
\begin{equation}\label{def.g.non}
    \sourceG_\pm=\pm\frac{1}{2}E\cdot \xi u_\pm \mp (E+\xi\times B)\cdot \na_\xi u_\pm +\Ga_\pm(u,u).
\end{equation}
Then, it is straightforward to compute from integration by parts that
\begin{eqnarray*}
 \langle \FM^{1/2},\sourceG_\pm\rangle &=& 0,\\
   \langle \xi\FM^{1/2},\sourceG_\pm\rangle &=&\pm Ea_\pm \pm b\times B  {\pm} \langle\xi\FM^{1/2}, \{\FI_\pm-\FP_\pm\} u\rangle \times B\\
   && {+\langle \xi\FM^{1/2},\Ga_\pm (u,u)\rangle},\\
    \frac{1}{6}\langle (|\xi|^2-3)\FM^{1/2},\sourceG_\pm\rangle &=&\pm \frac{1}{3} b\cdot E \pm \frac{1}{3}
     \langle\xi\FM^{1/2}, \{\FI_\pm-\FP_\pm\} u\rangle\cdot E\\
     && {+\langle \frac{1}{6}(|\xi|^2-3)\FM^{1/2},\Ga_\pm (u,u)\rangle}.
\end{eqnarray*}
Thus, the balance laws \eqref{m0}-\eqref{m2} for the general case can be  re-written as
\begin{eqnarray}
&&\pa_t a_\pm +\na_x\cdot b +\na_x \cdot \langle \xi
\FM^{1/2},\{\FI_\pm-\FP_\pm\} u\rangle=0,\label{m0.non}\\
&& \pa_t [b_i+  \langle \xi_i \FM^{1/2},\{\FI_\pm-\FP_\pm\} u\rangle
]+\pa_i (a_\pm+2c)\mp E_i
\notag
\\
\notag
&&\qquad+\na_x\cdot \langle \xi\xi_i
\FM^{1/2},\{\FI_\pm-\FP_\pm\} u\rangle =\pm E_ia_\pm \pm [b\times B]_i\\
&&\qquad\qquad {\pm}[\langle\xi\FM^{1/2}, \{\FI_\pm-\FP_\pm\} u\rangle \times B]_i
 {+\langle \xi_i\FM^{1/2},\FL_\pm u+\Ga_\pm (u,u)\rangle},\label{m1.non}\\
&&\pa_t \left[c+ \frac{1}{6}\langle (|\xi|^2-3)\FM^{1/2},\{\FI_\pm-\FP_\pm\}
u\rangle
\right]+  \frac{1}{3} \na_x\cdot b
\notag
\\
\notag
&&\qquad+
\frac{1}{6}\na_x\cdot \langle
 (|\xi|^2-3)\xi\FM^{1/2},\{\FI_\pm-\FP_\pm\} u\rangle\\
 \notag
 &&\qquad\qquad=\pm \frac{1}{3} b\cdot E \pm \frac{1}{3}
     \langle\xi\FM^{1/2}, \{\FI_\pm-\FP_\pm\} u\rangle\cdot E\\
 &&\qquad\qquad\qquad     {+\langle \frac{1}{6}(|\xi|^2-3)\FM^{1/2},\FL_\pm u+\Ga_\pm (u,u)\rangle}.
 \label{m2.non}
\end{eqnarray}
Furthermore, the system of the high-order moments \eqref{m2ii}-\eqref{m3} becomes
\begin{eqnarray}
&&\pa_t [\highG_{ii}( \{\FI_\pm-\FP_\pm\}u)+2c]+2\pa_i b_i
=\highG_{ii}(l_{\pm}+\sourceG_\pm),\label{m2ii.non}\\
&&\pa_t \highG_{ij}( \{\FI_\pm-\FP_\pm\}u) +\pa_j b_i+\pa_i b_j +\na_x\cdot
\langle \xi\FM^{1/2},\{\FI_\pm-\FP_\pm\} u\rangle
\notag
\\
&&\hspace{4cm}=\highG_{ij}(l_{\pm}+\sourceG_\pm),\ \ i\neq j,\label{m2ij.non}\\
&& \pa_t \highB_i( \{\FI_\pm-\FP_\pm\}u)+\pa_i c=\highB_i(l_{\pm}+\sourceG_\pm).\label{m3.non}
\end{eqnarray}
The difference between these and \eqref{m2ii}-\eqref{m3} is in \eqref{m2ij.non} since
$
\langle \FM^{1/2},\sourceG_\pm\rangle = 0.
$

We conclude this section with some remarks. The derivation of the
system \eqref{m2ii}-\eqref{m3} or \eqref{m2ii.non}-\eqref{m3.non}
was initiated by \cite{Guo2,Guo-IUMJ}, developed for VMB in \cite{S.VMB}, and refined in \cite{D-09PD}
by firstly introducing the high-order moment function $\highG$ and
$\highB$. These systems play an essential role in the Fourier
analysis of the linearized system with general microscopic sources
as in the case of the Vlasov-Poisson-Boltzmann system \cite{DS-VPB}
and even in the study of the hypocoercivity of some linear
degenerately dissipative kinetic equations  \cite{D-Hypo}. In fact,
they are also inspired by the earlier investigation of solution
spaces without any time derivatives for the well-posedness of the
Cauchy problem on the pure Boltzmann equation \cite{Duan} and the
Vlasov-Poisson-Boltzmann system \cite{DY-09VPB}.



\section{The linearized system with micro sources}
\label{sec.decayl}


In this section, we are concerned with time-decay properties of
solutions to the Cauchy problem on the linearized
Vlasov-Maxwell-Boltzmann system with microscopic sources.
Specifically, we state the main result in the first subsection, and
derive a Lyapunov-type inequality for pointwise time-frequency
variables in the second subsection. Temporal decay rates of the solution and
its derivatives in $L^2$-norms are obtained under some regularity
and integrability conditions on initial data and the source terms in the last
subsection.

\subsection{Time-decay properties of solutions}

Consider the Cauchy problem on the linearized system with a
microscopic source $\sourceG=\sourceG(t,x,\xi) = [\sourceG_+,
\sourceG_-]$:
\begin{equation}\label{ls}
    \left\{\begin{array}{l}
  \dis     \pa_t u+\xi\cdot\na_x u - E\cdot \xi \FM^{1/2}q_1=\FL u +\sourceG,\\
 \dis \pa_t E-\na_x\times B=-\langle \xi \FM^{1/2}, u_+-u_-\rangle,\\
\dis \pa_t B+\na_x\times E =0,\\
\dis\na_x\cdot E =\langle\FM^{1/2}, u_+-u_-\rangle,\ \ \na_x \cdot
B=0,\\
\dis [u,E,B]|_{t=0}=[u_0,E_0,B_0],
    \end{array}\right.
\end{equation}
where $\sourceG=\{\FI-\FP\} \sourceG$ and $[u_0,E_0,B_0]$ satisfies the
compatibility condition
\begin{equation}\label{comp.con}
\na_x\cdot E_0=\int_{\R^3}\FM^{1/2}(u_{0,+}-u_{0,-})d\xi,\ \ \na_x\cdot
B_0=0.
\end{equation}
For simplicity, we write
\begin{equation}
\notag
    U=[u,E,B],\ \ U_0=[u_0,E_0,B_0].
\end{equation}
Formally, the solution to the Cauchy problem \eqref{ls} is denoted by
\begin{eqnarray}
&& U(t)=U^{I}(t)+U^{II}(t),\label{def.lu}\\
&& U^{I}(t)=\semiG(t)U_0,\ \ U^{I}=[u^I,E^I,B^I], \label{def.lu1}\\
&& U^{II}(t)=\int_0^t\semiG(t-s)[\sourceG(s),0,0]ds,\ \ U^{II}=[u^{II},E^{II},B^{II}],\label{def.lu2}
\end{eqnarray}
where $\semiG(t)$ is the linear solution operator for the Cauchy problem
on the linearized homogeneous system corresponding to  \eqref{ls} with $\sourceG=0$.
Notice that $U^{II}(t)$ is well-defined because $[\sourceG(s),0,0]$ for
any $0\leq s\leq t$ satisfies the compatibility condition \eqref{comp.con}
due to the fact that $\FP \sourceG(s)=0$ and hence
\begin{equation}  \notag
    \int_{\R^3}\FM^{1/2}[\sourceG_+(s)-\sourceG_-(s)]d\xi=0.
\end{equation}
For brevity, we introduce the norms
$\|\cdot\|_{\CH^m}$, $\|\cdot\|_{\CZ_r}$ with $m\geq 0$ and $r\geq
1$ given by
\begin{equation}\label{brief.norm}
    \|U\|_{\CH^m}^2=\|u\|_{L^2_\xi(H^m_x)}^2+\|[E,B]\|_{H^m_x}^2,\ \  \|U\|_{\CZ_r}=\|u\|_{Z_r}+\|[E,B]\|_{L^r_x},
\end{equation}
for $U=[u,E,B]$, and we set $\CL^2=\CH^0$ as usual. The main result
of this section is stated as follows.


\begin{theorem}\label{thm.ls}
Let $1\leq r\leq 2$,  $\ell\geq 0$, and let $m\geq 0$ be an integer.
Suppose that \eqref{comp.con} and $\FP \sourceG=0$ hold. Assume that
$U$ is defined in \eqref{def.lu}, \eqref{def.lu1} and
\eqref{def.lu2} as the solution to the Cauchy problem \eqref{ls}.
Then, the first part $U^{I}$ corresponding to the solution of the
linearized homogeneous system satisfies
\begin{equation}
 \|\na_x^m U^I(t)\|_{\CL^2}
\leq C(1+t)^{-\frac{3}{2}(\frac{1}{r}-\frac{1}{2})-\frac{m}{2}}
\|U_0\|_{\CZ_r}
 +C(1+t)^{-\frac{\ell}{2}} \|\na_x^{m+\ell}U_0\|_{\CL^2},
 \label{thm.ls.1}
\end{equation}
for any $t\geq 0$, and the second part $U^{II}$ corresponding to  the solution of the linearized nonhomogeneous system  with vanishing initial data satisfies
\begin{multline}
 \|\na_x^m U^{II}(t)\|_{\CL^2}^2
\leq C\int_0^t (1+t-s)^{-3(\frac{1}{r}-\frac{1}{2})-m}\|\nu^{-1/2}\sourceG(s)\|_{Z_r}^2ds\\
+C \int_0^t (1+t-s)^{-\ell} {\|\nu^{-1/2}\na_x^{m+\ell}\sourceG(s)\|^2}\,ds,
\label{thm.ls.2}
\end{multline}
for any $t\geq 0$. Here, if $\ell$ is not integer, $\na_x^\ell$ is regarded as the fractional spatial derivative in terms of the Fourier transform.
\end{theorem}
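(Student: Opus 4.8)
The plan is to work entirely on the Fourier side in $x$, reducing the whole-space decay to a one-parameter family of pointwise-in-frequency energy estimates, which are then integrated against the natural frequency weights. Applying $\CF_x$ to \eqref{ls}, for each fixed $k\in\R^3$ one obtains an evolution equation in $t$ coupling $\widehat{u}(t,k)\in L^2_\xi$ with $\widehat{E}(t,k),\widehat{B}(t,k)\in\mathbb{C}^3$, and the whole theorem reduces to controlling $\widehat{U}(t,k)$ with a decay rate that degenerates at both ends of the frequency spectrum.

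The main work is to establish a pointwise time-frequency Lyapunov inequality — this is Theorem \ref{thm.tfli}. The basic $L^2$ energy $|\widehat{u}|^2_{L^2_\xi}+|\widehat{E}|^2+|\widehat{B}|^2$ only dissipates the microscopic component $\{\FI-\FP\}\widehat{u}$, via \eqref{coerc}, so one adds lower-order corrections built from the moment system \eqref{m0}--\eqref{m3}: taking appropriate complex bilinear pairings among $ik\widehat{b}$, $\widehat{E}$, $ik\times\widehat{E}$, $\widehat{B}$ and the high-order moments $\highG,\highB$ — with weights like $(1+|k|^2)^{-1}$ inserted to absorb the derivative loss — one forms the two interactive functionals $\CE_{\rm int}^{(1)}(t,k)$ and $\CE_{\rm int}^{(2)}(t,k)$ of \eqref{def.int1}--\eqref{def.int2}, the first producing dissipation of the macroscopic coefficients $\widehat{a}_\pm,\widehat{b},\widehat{c}$ and the second the very weak dissipation of $[\widehat{E},\widehat{B}]$. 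Adding these to the basic energy with suitably small relative constants yields a functional $\CE(t,k)\sim|\widehat{u}(t,k)|^2_{L^2_\xi}+|\widehat{E}(t,k)|^2+|\widehat{B}(t,k)|^2$ satisfying, for some $\la>0$,
$$\frac{d}{dt}\CE(t,k)+\la\,\rho(k)\,\CE(t,k)\leq C\,\|\nu^{-1/2}\widehat{\sourceG}(t,k)\|_{L^2_\xi}^2,\qquad \rho(k)=\frac{|k|^2}{(1+|k|^2)^2}.$$
Here $\rho(k)$ degenerates quadratically at $k=0$ (heat-like dissipation) and like $|k|^{-2}$ as $|k|\to\infty$, the latter being precisely the derivative loss inherited from the hyperbolic Maxwell equations; and the hypothesis $\FP\sourceG=0$ is what forces the source to enter only through $\|\nu^{-1/2}\widehat{\sourceG}\|$, since $\widehat{\sourceG}$ can then be paired against $\{\FI-\FP\}\widehat{u}$ and against the rapidly decaying velocity weights $\xi_i\FM^{1/2}$ occurring in \eqref{m1}--\eqref{m3}, after which Cauchy--Schwarz with the $\nu$-weight absorbs everything into the dissipation.

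Next come Gronwall and the frequency splitting. Integrating the differential inequality gives
$$\CE(t,k)\leq e^{-\la\rho(k)t}\CE(0,k)+C\int_0^t e^{-\la\rho(k)(t-s)}\|\nu^{-1/2}\widehat{\sourceG}(s,k)\|_{L^2_\xi}^2\,ds.$$
For $U^I$ only the first term is present, with $\CE(0,k)\sim|\widehat{U_0}(k)|^2$; I multiply by $|k|^{2m}$, integrate in $k$, and split $\R^3=\{|k|\leq 1\}\cup\{|k|\geq 1\}$. On the low frequencies $\rho(k)\sim|k|^2$, so Hausdorff--Young (giving $\|\widehat{U_0}\|_{L^{r'}_k L^2_\xi}\leq C\|U_0\|_{\CZ_r}$ with $1/r+1/r'=1$), H\"older in $k$ with exponent conjugate to $r'/2$, and $\big(\int_{|k|\leq1}|k|^{2mp}e^{-cp|k|^2t}\,dk\big)^{1/p}\leq C(1+t)^{-m-3(\frac1r-\frac12)}$ give the bound $C(1+t)^{-3(\frac1r-\frac12)-m}\|U_0\|_{\CZ_r}^2$; on the high frequencies $\rho(k)\geq c|k|^{-2}$, so for every $\ell\geq 0$ one has $|k|^{2m}e^{-\la\rho(k)t}\leq C_\ell(1+t)^{-\ell}|k|^{2m+2\ell}$, giving the bound $C_\ell(1+t)^{-\ell}\|\na_x^{m+\ell}U_0\|^2$. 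Adding and taking square roots yields \eqref{thm.ls.1}. For $U^{II}$ the initial datum vanishes, so only the Duhamel term survives; multiplying by $|k|^{2m}$, integrating in $k$, exchanging the $k$- and $s$-integrals, and applying the same low/high splitting to $\widehat{\sourceG}(s,\cdot)$ (with $\|\nu^{-1/2}\widehat{\sourceG}(s,\cdot)\|_{L^{r'}_kL^2_\xi}\leq C\|\nu^{-1/2}\sourceG(s)\|_{Z_r}$) produces \eqref{thm.ls.2} directly, with no square root since that estimate is already stated at the level of the squared norm.

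The genuinely hard step is the construction of the second paragraph: one must choose $\CE_{\rm int}^{(1)},\CE_{\rm int}^{(2)}$ together with all the constants weighting them — against the basic energy and against one another — so that every term produced by $\frac{d}{dt}$ is absorbed either by the microscopic dissipation $\|\nu^{1/2}\{\FI-\FP\}\widehat{u}\|^2$ or by $\rho(k)\CE(t,k)$; in particular one has to extract \emph{some} dissipation for the magnetic field $\widehat{B}$, which appears only through $\partial_t\widehat{B}+ik\times\widehat{E}=0$ and hence can be controlled only after $\widehat{E}$ has relaxed, at the cost of the two-derivative loss recorded in $\rho(k)$. The delicate bookkeeping lies in the coupling terms $\mp\widehat{E}_i$ in \eqref{m1} together with the field equations \eqref{VMB.pM1}--\eqref{VMB.pM2}, whose contributions must be tracked uniformly both as $k\to0$ and as $|k|\to\infty$; once $\rho(k)$ has been pinned down, the frequency-splitting argument is essentially the machinery already developed in \cite{DS-VPB}, and the case of non-integer $\ell$ follows verbatim by interpreting $|k|^\ell$ as a Fourier multiplier.
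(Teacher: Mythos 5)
Your proposal is correct and follows essentially the same route as the paper: the pointwise time-frequency Lyapunov functional of Theorem \ref{thm.tfli} built from the basic energy plus the interactive functionals \eqref{def.int1}--\eqref{def.int2}, yielding $\pa_t\CE+p(k)\CE\leq C\|\nu^{-1/2}\hat{\sourceG}\|_{L^2_\xi}^2$ with $p(k)=\la|k|^2/(1+|k|^2)^2$, followed by Gronwall, the low/high frequency splitting with H\"older--Hausdorff--Young on $|k|\leq 1$ and the $\sup_{|k|\geq1}|k|^{-2\ell}e^{-\la t/(4|k|^2)}\leq C(1+t)^{-\ell}$ bound on $|k|\geq 1$, and the same treatment of the Duhamel term for $U^{II}$. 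No gaps.
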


The above theorem shows that solutions to the linearized homogeneous
Vlasov-Maxwell-Boltzmann system decays in time with explicit rates whenever  {the} initial data has enough integrability and regularity. In
fact, in \eqref{thm.ls.1}, the first term on the r.h.s. is generated
by the lower-frequency part of solutions for which time rates are
consistent with those of the classical heat equation, while the
second term results from the high-frequency part of solutions for
which higher regularity of initial data implies faster time decay rates.
The Maxwell equations of the electromagnetic field are essentially
responsible for temporal  {rates with extra} initial regularity. This kind
of  phenomenon does not happen to the case of the
Vlasov-Poisson-Boltzmann system \cite{DS-VPB}, where the second term
on the r.h.s. of  \eqref{thm.ls.1} disappears. On the other hand, this behavior is consistent with the Vlasov-Maxwell-Boltzmann system on the torus
\cite{SG0}. In fact, from the proof of Theorem \ref{thm.ls} later, it is easy to see that the first term
on the r.h.s. of  \eqref{thm.ls.1} should disappear if the spatial domain is  {the} torus, because this term results from the estimates on the low frequency part of solutions and it is also noticed that the frequency variable $k$ takes the discrete values for the torus case. Thus, our proof for Theorem \ref{thm.ls} can provide a better understanding of the time-decay property of the Vlasov-Maxwell-Boltzmann system at the linearized level for both the whole space $\R^3$ and the torus $\T^3$ cases.

The  strategy  for the proof of Theorem  \ref{thm.ls} is   to
construct a time-frequency Lyapunov functional $\CE(t,k)$
corresponding to the Fourier transform of the system \eqref{ls} such
that the functional is not only equivalent with  {the} energy-type norm
$$
\|\hat{u}(t,k)\|_{L^2_\xi}^2+|\hat{E}(t,k)|^2+|\hat{B}(t,k)|^2,
$$
but also its dissipation rate can be characterized by the functional
itself; see Theorem \ref{thm.tfli}. Indeed, the dissipation rate is proportional to the above energy-type norm with coefficient
$$
p(k)=\frac{\la |k|^2}{(1+|k|^2)^2}.
$$
Once the Lyapunov-type inequality of $\CE(t,k)$ is obtained for pointwise  time-frequency
variables, time-decay of solutions in the physical space follows
from the analysis of the frequency integration over  low and
high frequency domains for which $p(k)$ has the different pointwise behaviors.

\subsection{A time-frequency Lyapunov inequality}\label{sec.sub.tfli}

In this subsection, we shall construct the desired time-frequency
Lyapunov functional as motioned before. The proof will be carried
out along the similar line as in \cite{DS-VPB}, but additional
efforts need to be made to take care of the weak dissipation of the
electromagnetic field.

\subsubsection{Estimate on the micro dissipation}

The first step for the construction of the time-frequency Lyapunov
functional is to obtain the micro dissipation on the basis of the
coercivity property \eqref{coerc} of $-\FL$. For simplicity, here
and in the sequel, write $ G=\langle \xi \FM^{1/2}, u_+-u_-\rangle$.
Then,
\begin{equation}
\notag
   G=\langle [\xi,-\xi]\FM^{1/2}, \{\FI-\FP\} u\rangle=\langle \xi\FM^{1/2}, \{\FI-\FP\} u\cdot q_1\rangle.
\end{equation}
Thus, \eqref{ls}$_1$-\eqref{ls}$_4$ also reads
\begin{equation}
\notag
    \left\{\begin{array}{l}
  \dis     \pa_t u+\xi\cdot\na_x u - E\cdot \xi \FM^{1/2}q_1=\FL u +\sourceG,\\
 \dis \pa_t E-\na_x\times B=-G,\\
\dis \pa_t B+\na_x\times E =0,\\
\dis\na_x\cdot E =a_+-a_-,\ \ \na_x \cdot
B=0.
    \end{array}\right.
\end{equation}
Taking the Fourier transform in $x$  gives
\begin{equation}\label{ls-1f}
    \left\{\begin{array}{l}
  \dis     \pa_t \hat{u}+i\xi\cdot k \hat{u} - \hat{E}\cdot \xi \FM^{1/2}q_1=\FL \hat{u} +\hat{\sourceG},\\
 \dis \pa_t \hat{E}-i k\times \hat{B}=-\hat{G},\\
\dis \pa_t \hat{B}+i k\times \hat{E} =0,\\
\dis ik\cdot \hat{E} =\widehat{a_+-a_-},\ \ k\cdot
\hat{B}=0.
    \end{array}\right.
\end{equation}
Then equation \eqref{ls-1f}$_1$  implies
\begin{equation}
\notag
    \frac{1}{2}\pa_t \|\hat{u}\|_{L^2_\xi}^2-\rmre \int_{\R^3}(\FL \hat{u}\mid \hat{u})d\xi
    -\rmre (\hat{E}\mid \hat{G})=\rmre  {\int_{\R^3}(\hat{\sourceG}\mid \hat{u})d\xi}.
\end{equation}
We now use the vector identity
\begin{equation}
\notag
 {   (-i k\times \hat{B}\mid \hat{E})+(ik\times \hat{E}\mid \hat{B})=2 i\rmre\, (k\times \hat{E}\mid \hat{B}),}
\end{equation}
to observe from \eqref{ls-1f}$_2$-\eqref{ls-1f}$_3$ that
\begin{equation}
\notag
   \frac{1}{2}\pa_t (|\hat{E}|^2+|\hat{B}|^2)+\rmre (\hat{G}\mid \hat{E})=0.
\end{equation}
Since $(\hat{E}\mid \hat{G})$ and $(\hat{G}\mid \hat{E})$ have the
same real part, taking a summation of these two equalities gives
\begin{equation}
\notag
    \frac{1}{2}\pa_t \left(\|\hat{u}\|_{L^2_\xi}^2+|\hat{E}|^2+|\hat{B}|^2\right)-\rmre \int_{\R^3}(\FL \hat{u}\mid \hat{u})d\xi=\rmre  {\int_{\R^3}(\hat{\sourceG}\mid \hat{u})d\xi}.
\end{equation}
From \eqref{coerc} and $\sourceG=\{\FI-\FP\} \sourceG$,  one has
\begin{equation}\label{diss-micr}
\pa_t \left(\|\hat{u}\|_{L^2_\xi}^2+|\hat{E}|^2+|\hat{B}|^2\right)+\la \int_{\R^3}\nu(\xi)|\{\FI-\FP\}\hat{u}|^2d\xi\leq C\|\nu^{-1/2}\sourceG\|_{ {L^2_\xi}}^2,
\end{equation}
where we have used Cauchy's inequality in the form
$$
\rmre  {\int_{\R^3}(\hat{\sourceG}\mid \hat{u})d\xi}
=
\rmre  {\int_{\R^3}(\hat{\sourceG}\mid \{\FI-\FP\}\hat{u})d\xi}
\leq \frac{C}{\la}\|\nu^{-1/2}\hat{\sourceG}\|_{ {L^2_\xi}}^2
+
\lambda\|\nu^{1/2} \{\FI-\FP\}\hat{u}\|_{ {L^2_\xi}}^2,
$$
for a properly small constant $0<\la<\la_0$.

Here, we remark that equation \eqref{diss-micr} is the main estimate
for the construction of the time-frequency Lyapunov functional
$\CE(t,k)$. However, notice that for this time, the macroscopic part
$\FP\hat{u}$ and $\hat{E},\hat{B}$ are not included in the
dissipation rate of \eqref{diss-micr}. Next, based on the
macroscopic balance laws and high-order moment equation obtained in
Section \ref{sec.me}, we shall introduce some interactive functional
to capture the rest of the dissipation rate related to $\FP\hat{u}$ and
$\hat{E},\hat{B}$.


\subsubsection{Estimate on the macro dissipation}

Let us apply those computations in Section \ref{sec.me} to the
system \eqref{ls}$_1$-\eqref{ls}$_4$.
Taking the mean value of every
two equations with $\pm$ sign for \eqref{m0}, \eqref{m1}, \eqref{m2}
and noticing $\FP \sourceG=0$, one has
\begin{equation}\label{macro.1}
    \left\{
    \begin{array}{l}
      \dis \pa_t\left(\frac{a_++a_-}{2}\right)+\na_x\cdot b=0,\\
      \dis \pa_t b_i+\pa_i\left(\frac{a_++a_-}{2}+2c\right)+\frac{1}{2}\sum_{j=1}^3\pa_j\highG_{ij}(\{\FI-\FP\}u\cdot [1,1])=0,\\
      \dis \pa_t c+ \frac{1}{3}\na_x\cdot b +\frac{5}{6}\sum_{i=1}^3\partial_i \highB_i(\{\FI-\FP\}u\cdot [1,1])=0,
    \end{array}\right.
\end{equation}
for $1\leq i\leq 3$, where moment functions $\highG(\cdot)$ and
$\highB(\cdot)$ are defined in \eqref{def.gala},  {and we used the following facts
\begin{eqnarray*}
&&\langle \FM^{1/2},g_\pm \rangle =\langle ([1,0]+[0,1])\FM^{1/2}, g\rangle=0,\\
&&\langle \xi_i\FM^{1/2},g_+ + g_- \rangle =\langle {[\xi_i,\xi_i]}\FM^{1/2}, g\rangle=0,\ \ 1\leq i\leq 3,\\
&&\langle \frac{1}{6}(|\xi|^2-3)\FM^{1/2},g_+ + g_- \rangle =\langle [|\xi|^2,|\xi|^2]\FM^{1/2}, g\rangle=0,
\end{eqnarray*}
due to $\FP g=0$ and likewise for $\FL u=[\FL_+u, \FL_- u]$ due to $\FP \FL u=0$.} Similarly, it
follows from \eqref{m2ii}, \eqref{m2ij} and \eqref{m3} that
\begin{equation}\label{macro.2}
    \left\{
    \begin{array}{l}
      \dis \pa_t \left[\frac{1}{2}\highG_{ij}(\{\FI-\FP\}u\cdot [1,1]) +2c\de_{ij}\right]\\
      \dis \hspace{1.5cm}+\pa_i b_j+\pa_j b_i
      =
      \frac{1}{2}\highG_{ij}((l_++l_-)+(\sourceG_++\sourceG_-)),\\
      \dis \frac{1}{2}\pa_t \highB_i(\{\FI-\FP\}u\cdot [1,1])+\pa_i c=\frac{1}{2}\highB_i((l_++l_-)+(\sourceG_++\sourceG_-)),
    \end{array}\right.
\end{equation}
for $ 1\leq i,j\leq 3$, where $l_\pm$ is still defined in
\eqref{def.l}, and $\de_{ij}$ denotes as usual the Kronecker delta.

\begin{lemma}
There is a time-frequency functional  {$\CE_{\rm int}^{(1)}(t,k)$} defined by
\begin{eqnarray}
\notag
  \CE_{\rm int}^{(1)}(t,k) &=& \frac{1}{1+|k|^2}\sum_{i=1}^3\frac{1}{2} (i k_i \hat{c}\mid \highB_i(\{\FI-\FP\}\hat{u}\cdot [1,1]))\\
  \notag
  &&+\frac{\kappa_1}{1+|k|^2}\sum_{ {i,j}=1}^3(ik_i \hat{b}_j+ik_j\hat{b}_i\mid \frac{1}{2} \highG_{ij}(\{\FI-\FP\}\hat{u}\cdot [1,1])+2 {\hat{c}}\de_{ij})\\
  && +\frac{\kappa_2}{1+|k|^2}\sum_{i=1}^3\left(ik_i\frac{\hat{a}_++\hat{a}_-}{2}\mid \hat{b}_i\right),
  \label{def.int1}
\end{eqnarray}
with two properly chosen constants $0<\kappa_2\ll\kappa_1\ll 1$ such that
\begin{multline}
\dis \pa_t \rmre \CE_{\rm int}^{(1)}(t,k)+\frac{\la |k|^2}{1+|k|^2} \left(|\widehat{a_++a_-}|^2+|\hat{b}|^2+|\hat{c}|^2\right)\\
\dis \leq C(\|\{\FI-\FP\}\hat{u}\|_{L^2_\xi}^2+\|\nu^{-1/2}\hat{\sourceG}\|_{L^2_\xi}^2),
\label{diss-macro+}
\end{multline}
holds for any $t\geq 0$ and $k\in \R^3$.
\end{lemma}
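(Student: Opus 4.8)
The plan is to construct $\CE_{\rm int}^{(1)}(t,k)$ as the linear combination in \eqref{def.int1} and then compute its time derivative using the Fourier-transformed macroscopic balance laws \eqref{macro.1} and high-order moment equations \eqref{macro.2}. The guiding principle is the standard one (as in \cite{DS-VPB,D-Hypo}): each term $\rmre(ik_i \hat{c}\mid \highB_i(\cdots))$, $\rmre(ik_i\hat{b}_j + ik_j\hat{b}_i\mid \tfrac{1}{2}\highG_{ij}(\cdots) + 2\hat{c}\de_{ij})$, and $\rmre(ik_i\tfrac{\hat{a}_++\hat{a}_-}{2}\mid \hat{b}_i)$ is an "interactive" product chosen so that, when one differentiates in $t$ and substitutes the evolution equations, the leading contribution produces a \emph{negative definite} term in the corresponding macroscopic gradient — namely $|k|^2|\hat{c}|^2$ from the first, $|k|^2|\hat{b}|^2$ (after summing over $i,j$ and using that $\sum_{ij}|ik_ib_j+ik_jb_i|^2 \gtrsim |k|^2|\hat b|^2$) from the second, and $|k|^2|\widehat{a_++a_-}|^2$ from the third. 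First I would differentiate term by term: in each term, one factor is a macroscopic coefficient satisfying a first-order equation whose right side contributes the good term $-|k|^2(\cdot)$ plus cross terms, and the other factor is a high-order moment of $\{\FI-\FP\}\hat u$ satisfying \eqref{macro.2}, whose $\pa_t$ brings in $\na_x b$, $\na_x c$ (controllable, lower order in the hierarchy) and the collisional/source pieces $\highG_{ij}(l_\pm + \sourceG_\pm)$, $\highB_i(l_\pm+\sourceG_\pm)$.

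The second step is to absorb all the error terms. The terms $l_\pm = -\xi\cdot\na_x\{\FI-\FP\}u + \FL u$ produce, after the $\tfrac{1}{1+|k|^2}$ prefactor, contributions of size $\tfrac{|k|}{1+|k|^2}\|\{\FI-\FP\}\hat u\|_{L^2_\xi}\cdot(\text{macroscopic gradient})$ and $\tfrac{1}{1+|k|^2}\|\nu^{1/2}\{\FI-\FP\}\hat u\|_{L^2_\xi}\cdot(\cdots)$; since $\tfrac{|k|}{1+|k|^2}\leq \tfrac12$ is bounded, Cauchy-Schwarz with a small weight $\eta$ lets me send the macroscopic-gradient factors into the good negative terms (at the cost of shrinking $\la$) and keep $\|\{\FI-\FP\}\hat u\|_{L^2_\xi}^2$ and $\|\nu^{-1/2}\hat\sourceG\|_{L^2_\xi}^2$ on the right, as in \eqref{diss-macro+}. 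The hierarchical choice $0<\kappa_2 \ll \kappa_1 \ll 1$ is what makes the absorption consistent: the $\hat c$--$\highB$ term is differentiated first and creates $-|k|^2|\hat c|^2$ but also a cross term with $\na_x b$; this cross term is controlled by the $|k|^2|\hat b|^2$ generated at order $\kappa_1$, whose own cross term with $\na_x a_\pm$ and the $\hat c\de_{ij}$ contribution are in turn controlled by the $|k|^2|\widehat{a_++a_-}|^2$ generated at order $\kappa_2$; finally the remaining cross term $\kappa_2 \rmre(ik_i\widehat{a_++a_-}\mid \na_x\cdot b$-type$)$ from \eqref{macro.1}$_1$ is dominated by choosing $\kappa_2$ small against the coefficient of $|\hat b|^2$. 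One also uses that $\tfrac{|k|^2}{1+|k|^2}\leq 1$ so that after multiplying the whole identity through one may replace the prefactor-weighted good terms by $\tfrac{\la|k|^2}{1+|k|^2}(|\widehat{a_++a_-}|^2+|\hat b|^2+|\hat c|^2)$ as claimed.

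A technical point worth isolating: when I differentiate $\rmre(ik_i \hat c \mid \highB_i(\cdots))$, the term where $\pa_t$ hits $\hat c$ uses \eqref{macro.1}$_3$, and $\na_x\cdot b$ there pairs against $\highB_i(\{\FI-\FP\}\hat u)$, giving $\tfrac{|k|}{1+|k|^2}|\hat b|\,\|\{\FI-\FP\}\hat u\|_{L^2_\xi}$ — again harmless since the prefactor is bounded and $|\hat b|^2$ is a good term at a higher $\kappa$-level. Likewise the $\highB_i$ piece of \eqref{macro.2}$_2$ hitting the other factor contributes $\tfrac{|k|}{1+|k|^2}|\hat c|$ against moments of $l_\pm+\sourceG_\pm$; the $\FL\hat u$ part of $l_\pm$ is bounded by $\|\nu^{1/2}\{\FI-\FP\}\hat u\|_{L^2_\xi}$ (the high-order moments are $\nu$-bounded functionals of $\{\FI-\FP\}\hat u$ by the structure of $\highG,\highB$), and the transport part $\xi\cdot\na_x\{\FI-\FP\}\hat u$ by $|k|\,\|\{\FI-\FP\}\hat u\|_{L^2_\xi}$; both are absorbed. \textbf{The main obstacle} I anticipate is purely bookkeeping rather than conceptual: making the triple hierarchy of Cauchy-Schwarz absorptions mutually consistent — i.e., verifying that every cross term produced at $\kappa$-level $j$ is genuinely dominated either by a lower-level good term or by a fixed small fraction of the micro-dissipation $\|\{\FI-\FP\}\hat u\|_{L^2_\xi}^2$ — without the required smallness of $\kappa_1,\kappa_2$ becoming circular. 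This is handled, as in \cite{DS-VPB}, by fixing $\kappa_1$ first (small enough that its cross terms are controlled by the order-$1$ good term $|\hat c|^2$ and by micro-dissipation), then fixing $\kappa_2 \ll \kappa_1$, and only at the very end choosing the absorption weights $\eta$; I do not expect any genuinely new difficulty here beyond the presence of the extra $\tfrac{1}{1+|k|^2}$ weight, which is uniformly bounded and hence benign.
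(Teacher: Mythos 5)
Your proposal is correct and takes essentially the same route as the paper, whose own proof simply cites \cite[Lemma 4.1]{D-Hypo} and observes that the lemma follows from \eqref{macro.1} and \eqref{macro.2} — precisely the computation you sketch (differentiate the three interactive terms, substitute the Fourier-transformed moment equations to extract the good terms in $|k|^2|\hat{c}|^2$, $|k|^2|\hat{b}|^2$, $|k|^2|\widehat{a_++a_-}|^2$, and absorb the cross and source contributions by Cauchy-Schwarz using the hierarchy $0<\kappa_2\ll\kappa_1\ll1$ and the bounded weight $1/(1+|k|^2)$). The one bookkeeping point to watch, which falls under the caveat you yourself flag, is that the $\kappa_1$-level cross term of size $\kappa_1|k|^2|\widehat{a_++a_-}||\hat{c}|/(1+|k|^2)$ must be split between the order-one $|\hat{c}|^2$ dissipation and the $\kappa_2$-level $|\widehat{a_++a_-}|^2$ dissipation, so $\kappa_2$ is small relative to $\kappa_1$ but not arbitrarily small (e.g.\ $\kappa_1^2\ll\kappa_2\ll\kappa_1$), which is fully compatible with the statement's ``properly chosen'' constants.
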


\begin{proof}
The proof can be found in \cite[Lemma 4.1]{D-Hypo}.  Although
 \cite[Lemma 4.1]{D-Hypo} studies the pure Boltzmann equation without force terms our case is directly similar.  It follows from  \eqref{macro.1} and \eqref{macro.2}, which are otherwise not used in the sequel.
\end{proof}

In order to further obtain the dissipation rate related to
$\hat{a}_\pm$ from the formula
$$
|\hat{a}_+|^2+|\hat{a}_-|^2=\frac{|\widehat{a_++a_-}|^2}{2}+\frac{|\widehat{a_+-a_-}|^2}{2},
$$
we need to consider the dissipation of $\widehat{a_+-a_-}$. For
that, taking difference of two equations with $\pm$ sign for
\eqref{m0}, \eqref{m1} and also noticing  {$\FP \sourceG=0$ which implies $\langle \FM^{1/2}, g_\pm \rangle =0$}, one has
\begin{eqnarray}
&&\pa_t (a_+-a_-)+\na_x\cdot G=0,\label{m0-}\\
&&\pa_t G + \na_x (a_+-a_-)-2E+ \na_x\cdot \highG (\{\FI-\FP\}u\cdot q_1)\label{m1-}\\
\notag
&&\qquad\qquad\qquad\qquad\qquad\qquad\qquad= {\langle [\xi,-\xi] \FM^{1/2}, g+\FL \{\FI-\FP\} u \rangle}.
\end{eqnarray}
Note that here and hereafter
$
\left(\na_x\cdot \highG\right)_j (\cdot)
=
\partial_i \highG_{ij} (\cdot).
$
Together with
\begin{equation}\label{divE}
    \na_x\cdot E=a_+-a_-,
\end{equation}
one has the following lemma.


\begin{lemma}\label{lem.a}
For any $t\geq 0$ and $k\in \R^3$,
it holds that
\begin{equation}
\dis \frac{\pa_t \rmre (\hat{G}\mid ik \widehat{({a}_+-{a}_-)})}{(1+|k|^2)} +\la |\widehat{a_+-a_-}|^2\\
\dis \leq  {C(\|\{\FI-\FP\}\hat{u}\|_{L^2_\xi}^2+\|\nu^{-1/2}\hat{\sourceG}\|_{L^2_\xi}^2)}.  \label{lem.a-1}
\end{equation}
\end{lemma}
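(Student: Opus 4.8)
Write $w=\widehat{a_+-a_-}$. The plan is to differentiate the pointwise time--frequency quantity $\rmre(\hat{G}\mid ikw)$ in $t$, substitute the Fourier transforms of the balance laws \eqref{m0-}, \eqref{m1-} and the divergence constraint \eqref{divE}, and extract $-|w|^2$ as the dominant term after dividing by $1+|k|^2$. By the product rule,
\begin{equation*}
\pa_t\rmre(\hat{G}\mid ikw)=\rmre(\pa_t\hat{G}\mid ikw)+\rmre(\hat{G}\mid ik\,\pa_t w).
\end{equation*}
Since the Fourier transform of \eqref{m0-} reads $\pa_t w=-ik\cdot\hat{G}$, the second term equals $|k\cdot\hat{G}|^2\le|k|^2|\hat{G}|^2$; using $\hat{G}=\langle\xi\FM^{1/2},\{\FI-\FP\}\hat{u}\cdot q_1\rangle$ and Cauchy--Schwarz, $|\hat{G}|^2\le C\|\{\FI-\FP\}\hat{u}\|_{L^2_\xi}^2$, so upon dividing by $1+|k|^2$ this piece is harmless.

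For the first term I would substitute the Fourier form of \eqref{m1-},
\begin{equation*}
\pa_t\hat{G}=-ikw+2\hat{E}-ik\cdot\highG(\{\FI-\FP\}\hat{u}\cdot q_1)+\langle[\xi,-\xi]\FM^{1/2},\hat{\sourceG}+\FL\{\FI-\FP\}\hat{u}\rangle,
\end{equation*}
and pair each piece against $ikw$. The leading piece yields the coercive term $\rmre(-ikw\mid ikw)=-|k|^2|w|^2$. The electric--field piece $2\hat{E}$ carries no smallness, and this is the one step needing the right idea: using \eqref{divE} in Fourier, $ik\cdot\hat{E}=w$, hence $k\cdot\hat{E}=-iw$, so $\rmre(2\hat{E}\mid ikw)=-2|w|^2$ --- a $k$--independent dissipative contribution, which is exactly what makes the final estimate uniform in $k$. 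Together these two pieces give $-(|k|^2+2)|w|^2$, and since $\frac{|k|^2+2}{1+|k|^2}\ge1$ this leaves $-|w|^2$ after dividing by $1+|k|^2$. The remaining two pieces are error terms: $|\highG(\{\FI-\FP\}\hat{u}\cdot q_1)|\le C\|\{\FI-\FP\}\hat{u}\|_{L^2_\xi}$ by \eqref{def.gala}, and, using self--adjointness of $\FL$ to write $\langle[\xi,-\xi]\FM^{1/2},\FL\{\FI-\FP\}\hat{u}\rangle=\langle\FL([\xi,-\xi]\FM^{1/2}),\{\FI-\FP\}\hat{u}\rangle$ with the fixed function $\FL([\xi,-\xi]\FM^{1/2})\in L^2_\xi$, together with $\langle[\xi,-\xi]\FM^{1/2},\hat{\sourceG}\rangle=\langle\nu^{1/2}[\xi,-\xi]\FM^{1/2},\nu^{-1/2}\hat{\sourceG}\rangle$, one finds $|\langle[\xi,-\xi]\FM^{1/2},\hat{\sourceG}+\FL\{\FI-\FP\}\hat{u}\rangle|\le C(\|\nu^{-1/2}\hat{\sourceG}\|_{L^2_\xi}+\|\{\FI-\FP\}\hat{u}\|_{L^2_\xi})$. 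Pairing these against $ikw$ and using Cauchy's inequality with a small $\eps>0$, their contribution is bounded by $\eps|k|^2|w|^2+C_\eps(1+|k|^2)(\|\{\FI-\FP\}\hat{u}\|_{L^2_\xi}^2+\|\nu^{-1/2}\hat{\sourceG}\|_{L^2_\xi}^2)$.

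Collecting the above together with the harmless term from the product rule, before dividing one has
\begin{equation*}
\pa_t\rmre(\hat{G}\mid ikw)\le-((1-2\eps)|k|^2+2)|w|^2+C_\eps(1+|k|^2)(\|\{\FI-\FP\}\hat{u}\|_{L^2_\xi}^2+\|\nu^{-1/2}\hat{\sourceG}\|_{L^2_\xi}^2),
\end{equation*}
and since $(1-2\eps)|k|^2+2\ge(1-2\eps)(1+|k|^2)$, dividing by $1+|k|^2$ and fixing $\eps=\tfrac14$ yields \eqref{lem.a-1} with $\la=\tfrac12$. I do not expect a genuine obstacle: once \eqref{m0-}, \eqref{m1-} and \eqref{divE} are in hand the computation is short, and beyond the electric--field trick above the only thing requiring care is the bookkeeping of frequency weights, since each error term carries a factor $|k|^2$ (directly or after Young's inequality) which the normalization $1/(1+|k|^2)$ exactly absorbs, while the genuinely $k$--independent coercive term $-2|w|^2$ produced by the coupling $\na_x\cdot E=a_+-a_-$ is what gives coercivity uniform in $k$.
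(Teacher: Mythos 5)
Your proof is correct and takes essentially the same route as the paper: both differentiate $\rmre(\hat{G}\mid ik\,\widehat{(a_+-a_-)})$, substitute the Fourier forms of \eqref{m0-}, \eqref{m1-} and the constraint \eqref{divE} (the latter supplying the $k$-independent coercive contribution $-2|\widehat{a_+-a_-}|^2$ through $ik\cdot\hat{E}=\widehat{a_+-a_-}$), and absorb the remaining terms by Cauchy's inequality before dividing by $1+|k|^2$. The only cosmetic difference is that the paper evaluates $(ik\,\widehat{(a_+-a_-)}-2\hat{E}\mid ik\,\widehat{(a_+-a_-)})$ in two ways rather than applying the product rule term by term, which is the same computation.
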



\begin{proof}
In fact, taking the
Fourier transform in $x$ for \eqref{m0-}, \eqref{m1-} and \eqref{divE} gives
\begin{equation}\label{lem.a-1.p1}
\left\{\begin{array}{l}
 \dis \pa_t \widehat{(a_+-a_-)}+ik\cdot \hat{G}=0,\\[3mm]
\dis \pa_t \hat{G}+ik \widehat{(a_+-a_-)}-2\widehat{E}+ik\cdot \highG(\{\FI-\FP\}\hat{u}\cdot q_1)\\[3mm]
\dis \qquad\qquad\qquad\qquad\qquad\qquad\qquad= {\langle [\xi,-\xi] \FM^{1/2}, \hat{g}+\FL \{\FI-\FP\} \hat{u} \rangle},\\[3mm]
\dis  ik\cdot \hat{E}=\widehat{a_+-a_-}.
\end{array}\right.
\end{equation}
On one hand, notice from \eqref{lem.a-1.p1}$_3$ that
\begin{equation}
\notag
    (ik \widehat{(a_+-a_-)}-2\widehat{E}\mid ik \widehat{(a_+-a_-)})=(|k|^2+2)|\widehat{a_+-a_-}|^2.
\end{equation}
On the other hand, it follows from \eqref{lem.a-1.p1}$_2$ that
\begin{eqnarray*}
&&  (ik \widehat{(a_+-a_-)}-2\widehat{E}\mid ik \widehat{(a_+-a_-)})\\
&& =(-\pa_t \hat{G} -ik\cdot \highG (\{\FI-\FP\}\hat{u}\cdot q_1) {+\langle [\xi,-\xi] \FM^{1/2}, \hat{g}+\FL \{\FI-\FP\} \hat{u} \rangle}\mid ik \widehat{(a_+-a_-)})\\
&&=-\pa_t (\hat{G}\mid ik \widehat{(a_+-a_-)})+(\hat{G}\mid ik \pa_t\widehat{(a_+-a_-)})\\
&&\ \ \ -(ik\cdot \highG(\{\FI-\FP\}\hat{u}\cdot q_1) {-\langle [\xi,-\xi] \FM^{1/2}, \hat{g}+\FL \{\FI-\FP\} \hat{u} \rangle}\mid ik \widehat{(a_+-a_-)}).
\end{eqnarray*}
Combining the above two equations
and using \eqref{lem.a-1.p1}$_1$, one has
\begin{multline*}
\pa_t (\hat{G}\mid ik \widehat{(a_+-a_-)})
+
(2+ |k|^2) ~ |\widehat{a_+-a_-}|^2
\\
=(\hat{G}\mid k ~k \cdot \hat{G})
 -(ik\cdot \highG(\{\FI-\FP\}\hat{u}\cdot q_1) {-\langle [\xi,-\xi] \FM^{1/2}, \hat{g}+\FL \{\FI-\FP\} \hat{u} \rangle}\mid ik \widehat{(a_+-a_-)}).
\end{multline*}
It follows using Cauchy's inequality that
\begin{multline*}
\pa_t\rmre (\hat{G}\mid ik \widehat{(a_+-a_-)})+\la (1+|k|^2)|\widehat{a_+-a_-}|^2\\
\leq  |k\cdot \hat{G}|^2+C|k\cdot \highG(\{\FI-\FP\}\hat{u}\cdot q_1)|^2
 {+C|\langle [\xi,-\xi]\FM^{1/2},\hat{g}\rangle|^2}\\
 {+C|\langle [\xi,-\xi]\FM^{1/2},\FL \{\FI-\FP\}\hat{u}\rangle|^2}\\
\leq  C {(1+|k|^2)}\|\{\FI-\FP\}\hat{u}\|_{L^2_\xi}^2 {+C\|\nu^{-1/2}\hat{g}\|_{L^2_\xi}^2}.
\end{multline*}
Therefore, \eqref{lem.a-1} holds by further dividing the above
estimate by $1+|k|^2$. Lemma \ref{lem.a} is proved.
\end{proof}


Notice that from \eqref{divE}, the dissipation rate in
\eqref{lem.a-1} can be rewritten as
$$
|\widehat{a_+-a_-}|^2=|k\cdot \hat{E}|^2=|k|^2|\tilde{k}\cdot
\hat{E}|.
$$
Here and in the sequel, we always denote $\tilde{k}=k/|k|$ for
$|k|\neq 0$. Therefore, for this time, the dissipation rate for
$\tilde{k}\times \hat{E}$ and $\tilde{k}\times \hat{B}$ is still not
included. Actually, they can be recovered from the Maxwell equations
of the electromagnetic field as well as the evolution equation
\eqref{m1-} of the linear coupling term $G$.

\subsubsection{Estimate on the electromagnetic dissipation}


As mentioned before, we now devote ourselves to obtaining the
dissipation rate related to $\tilde{k}\times \hat{E}$ and
$\tilde{k}\times \hat{B}$ in the following

\begin{lemma}\label{lem.eb}
For any $t\geq 0$ and $k\in \R^3$
it holds that
\begin{multline}
\dis \pa_t \left(\frac{\rmre (-ik\times \hat{B}\mid \hat{E})-|k|^2\rmre (\hat{G}\mid \hat{E})}{(1+|k|^2)^2} \right) +\la\frac{|k\times \hat{B}|^2}{(1+|k|^2)^2}\\
\dis \ \ \ \ \ +\frac{|k|^2}{(1+|k|^2)^2}|k\cdot \hat{E}|^2+\frac{\la |k|^2}{(1+|k|^2)^2}|\hat{E}|^2
\leq  {C(\|\{\FI-\FP\}\hat{u}\|_{L^2_\xi}^2+\|\nu^{-1/2}\hat{\sourceG}\|_{L^2_\xi}^2)}.
\label{lem.eb.1}
\end{multline}
\end{lemma}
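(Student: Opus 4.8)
The plan is to extend the construction of Lemma \ref{lem.a}: introduce one more pointwise-in-$k$ interactive functional whose time derivative produces the only components of the energy still left undissipated by \eqref{diss-micr}, \eqref{diss-macro+} and Lemma \ref{lem.a}, namely $k\times\hat B$ and the full field $\hat E$, modulo quantities already controlled by $\|\{\FI-\FP\}\hat u\|_{L^2_\xi}$ and $\|\nu^{-1/2}\hat\sourceG\|_{L^2_\xi}$; assembling all four estimates then closes the full time-frequency Lyapunov functional of Theorem \ref{thm.tfli}. The functional is forced by the left side of \eqref{lem.eb.1}: I take $(1+|k|^2)^{-2}\bigl[\rmre(-ik\times\hat B\mid\hat E)-|k|^2\rmre(\hat G\mid\hat E)\bigr]$, and the only inputs are the Fourier Maxwell equations \eqref{ls-1f}$_2$--\eqref{ls-1f}$_4$ and the $\hat G$-equation \eqref{lem.a-1.p1}$_2$.

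First I would differentiate $\rmre(-ik\times\hat B\mid\hat E)$ in $t$. Inserting $\pa_t\hat B=-ik\times\hat E$ in the $\hat B$-term and using the vector triple product identity together with $ik\cdot\hat E=\widehat{a_+-a_-}$ gives $|k|^2|\hat E|^2-|\widehat{a_+-a_-}|^2$; inserting $\pa_t\hat E=ik\times\hat B-\hat G$ in the $\hat E$-term gives $-|k\times\hat B|^2+\rmre(ik\times\hat B\mid\hat G)$. The $+|k|^2|\hat E|^2$ has the wrong sign, which is the whole reason for the corrector $-|k|^2\rmre(\hat G\mid\hat E)$: differentiating it and inserting $\pa_t\hat E=ik\times\hat B-\hat G$ together with \eqref{lem.a-1.p1}$_2$, whose right side contains $+2\hat E$, produces $-2|k|^2|\hat E|^2$, which beats $+|k|^2|\hat E|^2$ and leaves $-|k|^2|\hat E|^2$; it also produces $-|k|^2|\widehat{a_+-a_-}|^2$, $+|k|^2|\hat G|^2$, $-|k|^2\rmre(\hat G\mid ik\times\hat B)$, and the pairings of $-|k|^2\hat E$ with the microscopic pieces $-ik\cdot\highG(\{\FI-\FP\}\hat u\cdot q_1)$ and $\langle[\xi,-\xi]\FM^{1/2},\hat\sourceG+\FL\{\FI-\FP\}\hat u\rangle$ of the $\hat G$-equation. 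After dividing by $(1+|k|^2)^2$ the net good terms are
$$
\frac{|k\times\hat B|^2+|k|^2|\hat E|^2+(1+|k|^2)|\widehat{a_+-a_-}|^2}{(1+|k|^2)^2},
$$
and this dominates the dissipation claimed in \eqref{lem.eb.1}, since $|\widehat{a_+-a_-}|=|k\cdot\hat E|$ and $(1+|k|^2)^{-1}\ge|k|^2(1+|k|^2)^{-2}$.

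It remains to absorb the error terms. The two cross terms $\rmre(ik\times\hat B\mid\hat G)$ and $-|k|^2\rmre(\hat G\mid ik\times\hat B)$ go by Cauchy's inequality into $|k\times\hat B|^2/(1+|k|^2)^2$ with a small weight plus $C\|\{\FI-\FP\}\hat u\|_{L^2_\xi}^2$, using $|\hat G|\le C\|\{\FI-\FP\}\hat u\|_{L^2_\xi}$ and $|k|^4/(1+|k|^2)^2\le1$; the term $|k|^2|\hat G|^2/(1+|k|^2)^2$ is bounded directly by $\tfrac14\|\{\FI-\FP\}\hat u\|_{L^2_\xi}^2$. The microscopic pairings with $|k|^2\hat E$ go into $|k|^2|\hat E|^2/(1+|k|^2)^2$ with a small weight, using $|\highG(\cdot)|\le C\|\{\FI-\FP\}\hat u\|_{L^2_\xi}$, $|\langle[\xi,-\xi]\FM^{1/2},\hat\sourceG\rangle|\le C\|\nu^{-1/2}\hat\sourceG\|_{L^2_\xi}$, and — the single delicate point — $\langle[\xi,-\xi]\FM^{1/2},\FL\{\FI-\FP\}\hat u\rangle=\langle\FL([\xi,-\xi]\FM^{1/2}),\{\FI-\FP\}\hat u\rangle$ by self-adjointness of $\FL$; since $\FL([\xi,-\xi]\FM^{1/2})$ is a fixed rapidly decaying function this is controlled by $C\|\{\FI-\FP\}\hat u\|_{L^2_\xi}$, not merely by $\|\nu^{1/2}\{\FI-\FP\}\hat u\|_{L^2_\xi}$, which is essential since the latter does not appear on the right of \eqref{lem.eb.1}.

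I expect the main obstacle to be bookkeeping rather than anything conceptual: one must check that after the division by $(1+|k|^2)^2$ every error term carries a uniformly bounded power of $|k|$ — this is exactly why no power of $|k|$ above four may occur — and choose the Cauchy parameters so that the reabsorbed fractions of $|k\times\hat B|^2/(1+|k|^2)^2$ and $|k|^2|\hat E|^2/(1+|k|^2)^2$ leave positive residuals. The one place where I would be careful is the sign in the triple product computation: the $\hat B$-contribution really does yield $+|k|^2|\hat E|^2$, and it is precisely this sign that forces the coefficient $-|k|^2$ in front of $\rmre(\hat G\mid\hat E)$ in the functional.
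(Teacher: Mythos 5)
Your proposal is correct and takes essentially the same route as the paper: it builds the identical interactive functional $(1+|k|^2)^{-2}\bigl[\rmre(-ik\times\hat B\mid\hat E)-|k|^2\rmre(\hat G\mid\hat E)\bigr]$, uses the same Fourier Maxwell equations and the $\hat G$-equation whose $+2\hat E$ term yields $-2|k|^2|\hat E|^2$ to dominate the bad $+|k|^2|\hat E|^2$, and absorbs the cross and microscopic terms by the same Cauchy argument with the bound $|\langle[\xi,-\xi]\FM^{1/2},\FL\{\FI-\FP\}\hat u\rangle|\leq C\|\{\FI-\FP\}\hat u\|_{L^2_\xi}$. The only differences are cosmetic: the paper derives the two halves as separate inequalities \eqref{lem.eb.p2}--\eqref{lem.eb.p3} and sums them with a small parameter $\eps$, and it drops the extra $|k\cdot\hat E|^2$ contribution from $|k\times\hat E|^2$ that you retain.
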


\begin{proof}
Recall the Fourier transform in $x$ for the Maxwell system
\begin{equation}\label{lem.eb.p1}
    \left\{\begin{array}{l}
      \dis\pa_t \hat{E}-ik\times \hat{B}=-\hat{G},\\
       \dis \pa_t \hat{B}+ i k\times \hat{E}=0,\\
       \dis ik\cdot \hat{E}=\widehat{a_+-a_-},\ \ k\cdot \hat{B}=0.
    \end{array}\right.
\end{equation}
It follows that
\begin{eqnarray*}
|k\times \hat{B}|^2&=& (ik\times \hat{B}\mid \pa_t \hat{E}+\hat{G})\\
&=&\pa_t (ik\times \hat{B}\mid \hat{E})-(ik\times \pa_t\hat{B}\mid \hat{E}) +(ik\times \hat{B}\mid \hat{G}),
\end{eqnarray*}
where it further holds that
\begin{equation}
\notag
    -(ik\times \pa_t\hat{B}\mid \hat{E})=-( {k\times (k\times \hat{E})}\mid \hat{E})=|k\times \hat{E}|^2.
\end{equation}
Hence, one has the identity
\begin{equation}
\notag
    \pa_t (-ik\times \hat{B}\mid \hat{E})+|k\times \hat{B}|^2=|k\times \hat{E}|^2+(ik\times \hat{B}\mid \hat{G}),
\end{equation}
which using Cauchy's inequality, implies that
\begin{equation}\label{lem.eb.p2}
    \pa_t\left(\frac{\rmre (-ik\times \hat{B}\mid \hat{E})}{(1+|k|^2)^2}\right)+\la \frac{|k\times \hat{B}|^2}{(1+|k|^2)^2}\leq \frac{|k|^2}{(1+|k|^2)^2}|\hat{E}|^2+C|\hat{G}|^2.
\end{equation}
To control the first term on the r.h.s. of \eqref{lem.eb.p2}, one can again use \eqref{lem.a-1.p1}$_2$ together with
\eqref{lem.eb.p1}$_3$ and then \eqref{lem.eb.p1}$_1$ to get
\begin{multline*}
 -\pa_t(\hat{G}\mid \hat{E})+|k\cdot \hat{E}|^2+2|\hat{E}|^2\\
 =-(\hat{G}\mid \pa_t\hat{E})+(ik\cdot \highG(\{\FI-\FP\}\hat{u}\cdot q_1) {-\langle [\xi,-\xi] \FM^{1/2}, \hat{g}+\FL \{\FI-\FP\} \hat{u} \rangle}\mid  \hat{E})\\
 =|\hat{G}|^2-(\hat{G}\mid ik\times \hat{B})+(ik\cdot \highG(\{\FI-\FP\}\hat{u}\cdot q_1)\mid  \hat{E})\\
  {-(\langle [\xi,-\xi] \FM^{1/2}, \hat{g}+\FL \{\FI-\FP\} \hat{u} \rangle\mid  \hat{E})},
\end{multline*}
which implies
\begin{multline*}
\dis
- \pa_t\rmre (\hat{G}\mid \hat{E})+|k\cdot \hat{E}|^2+2|\hat{E}|^2\\
\dis \leq \eps (|\hat{B}|^2+|\hat{E}|^2) {+\frac{C}{\eps} \left[(1+|k|^2)|\hat{G}|^2+|k|^2|\highG(\{\FI-\FP\}\hat{u}\cdot q_1)|^2\right]}\\
 {+\frac{C}{\eps} \left(|\langle [\xi,-\xi]\FM^{1/2},\hat{g}\rangle|^2
+C|\langle [\xi,-\xi]\FM^{1/2},\FL \{\FI-\FP\}\hat{u}\rangle|^2\right)}\\
 {\leq \eps (|\hat{B}|^2+|\hat{E}|^2)
+\frac{C}{\eps}\left[(1+|k|^2)\|\{\FI-\FP\}\hat{u}\|_{L^2_\xi}^2+\|\nu^{-1/2}\hat{\sourceG}\|_{L^2_\xi}^2\right]},
\end{multline*}
for a constant $0<\eps\leq 1$ to be chosen later. Notice $|k|^2|\hat{B}|^2=|k\times \hat{B}|^2$ due to $k\cdot \hat{B}=0$. Then, further multiplying the above inequality by $|k|^2/(1+|k|^2)^2$ gives
\begin{multline}
\dis- \pa_t\frac{|k|^2\rmre (\hat{G}\mid \hat{E})}{(1+|k|^2)^2}+\frac{|k|^2}{(1+|k|^2)^2}|k\cdot \hat{E}|^2+\frac{(2-\eps)|k|^2}{(1+|k|^2)^2}|\hat{E}|^2\\
\dis \leq \eps \frac{|k\times \hat{B}|^2}{(1+|k|^2)^2}+\frac{C}{\eps} \frac{|k|^2}{1+|k|^2}( {\|\{\FI-\FP\}\hat{u}\|_{L^2_\xi}^2+\|\nu^{-1/2}\hat{\sourceG}\|_{L^2_\xi}^2}).\label{lem.eb.p3}
\end{multline}
Therefore, \eqref{lem.eb.1} follows from taking summation of \eqref{lem.eb.p2}, \eqref{lem.eb.p3} and then choosing $0<\eps<1$ small enough.  {Then} Lemma \ref{lem.eb} is proved.
\end{proof}


Here, we remark that although the pure homogeneous Maxwell system
usually preserves the energy, the electromagnetic field $[E,B]$ in
the Vlasov-Maxwell-Boltzmann system indeed has some kind of weak
dissipation which results essentially from the coupling of $[E,B]$
with the microscopic moment function $G$.

\subsubsection{Derivation of the  time-frequency Lyapunov inequality}

Now, we are in a position to prove

\begin{theorem}\label{thm.tfli}
Let $U=[u,E,B]$ be the solution to the Cauchy problem \eqref{ls}
with $\FP g=0$. Then there is a time-frequency functional $\CE(t,k)$
such that
\begin{equation}\label{thm.tfli.1}
    \CE(t,k)\sim \|\hat{u}\|_{L^2_\xi}^2+|[\hat{E},\hat{B}]|^2,
\end{equation}
where
$
|[\hat{E},\hat{B}]|^2
 \eqdef
 |\hat{E}|^2
 +
  |\hat{B}|^2
$
and for any $t\geq 0$ and $k\in \R^3$ we have
\begin{equation}\label{thm.tfli.2}
    \pa_t \CE(t,k)+\frac{\la |k|^2}{(1+|k|^2)^2} \CE(t,k)\leq C\|\nu^{-1/2}\hat{\sourceG}\|_{L^2_\xi}^2.
\end{equation}
\end{theorem}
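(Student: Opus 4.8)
The plan is to construct $\CE(t,k)$ as a carefully weighted linear combination of the five estimates established above, namely the micro-dissipation inequality \eqref{diss-micr}, the macroscopic estimate \eqref{diss-macro+} on $\CE_{\rm int}^{(1)}(t,k)$, the $\widehat{a_+-a_-}$ estimate \eqref{lem.a-1} from Lemma \ref{lem.a}, and the electromagnetic estimate \eqref{lem.eb.1} from Lemma \ref{lem.eb}. Concretely, I would set
\begin{equation*}
\CE(t,k) = \|\hat u\|_{L^2_\xi}^2 + |[\hat E,\hat B]|^2 + \kappa_3\, \rmre\,\CE_{\rm int}^{(1)}(t,k) + \kappa_4\,\frac{\rmre(\hat G\mid ik\,\widehat{(a_+-a_-)})}{1+|k|^2} + \kappa_5\, \CE_{\rm int}^{(2)}(t,k),
\end{equation*}
where $\CE_{\rm int}^{(2)}(t,k)$ is the electromagnetic interactive functional appearing inside $\pa_t(\cdot)$ in \eqref{lem.eb.1} (i.e.\ $[\rmre(-ik\times\hat B\mid\hat E)-|k|^2\rmre(\hat G\mid\hat E)]/(1+|k|^2)^2$, the object referenced as \eqref{def.int2} in the introduction), and $0<\kappa_5\ll\kappa_4\ll\kappa_3\ll 1$ are small constants to be fixed in order.

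First I would verify the equivalence \eqref{thm.tfli.1}. Each interactive functional is bounded, uniformly in $k$, by $C\big(\|\{\FI-\FP\}\hat u\|_{L^2_\xi}^2 + |[\hat E,\hat B]|^2\big)$ — this uses $|k|/(1+|k|^2)\le 1/2$ and $|k|^2/(1+|k|^2)^2\le 1/4$ together with the explicit boundedness of the moment functionals $\highG,\highB,\hat G$ in terms of $\|\{\FI-\FP\}\hat u\|_{L^2_\xi}$, plus Cauchy--Schwarz. Hence for $\kappa_3,\kappa_4,\kappa_5$ small enough the corrections are absorbed into $\|\hat u\|_{L^2_\xi}^2 + |[\hat E,\hat B]|^2$, giving $\CE(t,k)\sim \|\hat u\|_{L^2_\xi}^2 + |[\hat E,\hat B]|^2$ with constants independent of $k$.

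Next I would add up the time-derivative inequalities with these weights. The point is that \eqref{diss-micr} supplies $\la\|\nu^{1/2}\{\FI-\FP\}\hat u\|_{L^2_\xi}^2$; then $\kappa_3\times$\eqref{diss-macro+} supplies $\la\frac{|k|^2}{1+|k|^2}(|\widehat{a_++a_-}|^2+|\hat b|^2+|\hat c|^2)$ at the cost of a $C\|\{\FI-\FP\}\hat u\|_{L^2_\xi}^2$ term absorbed by the micro dissipation once $\kappa_3$ is small; then $\kappa_4\times$\eqref{lem.a-1} supplies $\la|\widehat{a_+-a_-}|^2$, and combined with the $\widehat{a_++a_-}$ term this yields control of $|\hat a_+|^2+|\hat a_-|^2$ up to the $|k|^2/(1+|k|^2)$ weight; finally $\kappa_5\times$\eqref{lem.eb.1} supplies $\la\frac{|k|^2}{(1+|k|^2)^2}|\hat E|^2$ plus $\la\frac{|k\times\hat B|^2}{(1+|k|^2)^2}$, and since $k\cdot\hat B=0$ the latter equals $\la\frac{|k|^2}{(1+|k|^2)^2}|\hat B|^2$. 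Collecting everything and bounding each accumulated dissipation term from below by $p(k)=\frac{\la|k|^2}{(1+|k|^2)^2}$ times the corresponding square (using $\nu(\xi)\ge\nu_0>0$ and $\frac{1}{1+|k|^2}\ge\frac{1}{(1+|k|^2)^2}$), the total dissipation dominates $p(k)\big(\|\hat u\|_{L^2_\xi}^2 + |[\hat E,\hat B]|^2\big) \gtrsim p(k)\,\CE(t,k)$; the source terms all appear only as $C\|\nu^{-1/2}\hat\sourceG\|_{L^2_\xi}^2$, which is exactly the right-hand side of \eqref{thm.tfli.2}.

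The main obstacle is the bookkeeping of the weight powers of $(1+|k|^2)$: the electromagnetic dissipation is only of strength $|k|^2/(1+|k|^2)^2$, which is weaker by one factor of $(1+|k|^2)$ than the macroscopic dissipation $|k|^2/(1+|k|^2)$, so the functional must be normalized so that \emph{every} term in the final dissipation carries at least the factor $p(k)=\la|k|^2/(1+|k|^2)^2$, and the lower-order error terms produced when differentiating the interactive functionals (which come \emph{without} any smallness in $|k|$, only a gain from $\kappa_i$) must be absorbed in the right order $\kappa_3,\kappa_4,\kappa_5$ chosen successively small. A secondary subtlety is that $\CE_{\rm int}^{(1)}$ also produces the $\hat c$-cross term $2\hat c\de_{ij}$ inside $\highG_{ij}(\cdots)+2\hat c\de_{ij}$, so one must check that the $c$-dissipation coming from \eqref{diss-macro+} is genuinely positive and not cancelled; this is handled inside \cite[Lemma 4.1]{D-Hypo} and I would simply invoke it. Once the constants are fixed in this order, \eqref{thm.tfli.2} follows and the theorem is proved.
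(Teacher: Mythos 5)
Your construction is correct and essentially the same as the paper's: the paper simply groups your $\kappa_4$- and $\kappa_5$-pieces into the single functional $\CE_{\rm int}^{(2)}(t,k)$ of \eqref{def.int2} (which contains \emph{both} the $(\hat{G}\mid ik\,\widehat{(a_+-a_-)})/(1+|k|^2)$ term and the electromagnetic term, so your citation of \eqref{def.int2} for only the latter is slightly off) and uses one common small constant $\kappa_3$, which suffices because all three differentiated estimates \eqref{diss-macro+}, \eqref{lem.a-1}, \eqref{lem.eb.1} produce errors only of the form $C(\|\{\FI-\FP\}\hat{u}\|_{L^2_\xi}^2+\|\nu^{-1/2}\hat{\sourceG}\|_{L^2_\xi}^2)$, so no hierarchical ordering of the constants is actually needed. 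One small correction: the interactive functionals involve the macroscopic components $\hat{a}_\pm,\hat{b},\hat{c}$ (and $\hat G$), so they are bounded by $C(\|\hat{u}\|_{L^2_\xi}^2+|[\hat{E},\hat{B}]|^2)$ rather than by $C(\|\{\FI-\FP\}\hat{u}\|_{L^2_\xi}^2+|[\hat{E},\hat{B}]|^2)$ as you state, but this weaker bound is all that the equivalence \eqref{thm.tfli.1} requires.
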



\begin{proof}
Let
\begin{equation}\label{thm.tfli.p1}
    \CE(t,k)\eqdef \|\hat{u}\|_{L^2_\xi}^2+|[\hat{E},\hat{B}]|^2+\kappa_3\rmre (\CE_{\rm int}^{(1)}(t,k)+\CE_{\rm int}^{(2)}(t,k)),
\end{equation}
for a constant $\kappa_3>0$ to be determined later, where $\CE_{\rm int}^{(1)}(t,k)$ is given by \eqref{def.int1} and $\CE_{\rm int}^{(2)}(t,k)$ is denoted by
\begin{equation}\label{def.int2}
  \CE_{\rm int}^{(2)}(t,k) \eqdef \frac{( {\hat{G}}\mid ik \widehat{({a}_+-{a}_-)})}{(1+|k|^2)}+\frac{(-ik\times \hat{B}\mid \hat{E})-|k|^2(\hat{G}\mid \hat{E})}{(1+|k|^2)^2}.
\end{equation}
One can fix $\kappa_3>0$ small enough such that \eqref{thm.tfli.1} holds true. The rest is to check \eqref{thm.tfli.2}. In fact, the linear combination of \eqref{diss-micr}, \eqref{diss-macro+}, \eqref{lem.a-1} and \eqref{lem.eb.1} according to the definition \eqref{thm.tfli.p1}
implies
\begin{multline*}
\dis\pa_t \CE(t,k)+\la \|\nu^{1/2}\{\FI-\FP\}\hat{u}\|_{L^2_\xi}^2
+\frac{\la |k|^2}{1+|k|^2}(|\hat{a}_\pm|^2
+|\hat{b}|^2+|\hat{c}|^2)+\la  |k\cdot \hat{E}|^2
\\
\dis +\frac{\la |k|^2}{(1+|k|^2)^2} \left(|
\hat{E}|^2+|\tilde{k}\times \hat{B}|^2 \right) \leq
C\|\nu^{-1/2}\hat{\sourceG}\|_{L^2_\xi}^2,
\end{multline*}
that is
\begin{multline*}
\dis\pa_t \CE(t,k)+\la \|\nu^{1/2}\{\FI-\FP\}\hat{u}\|_{L^2_\xi}^2+\frac{\la |k|^2}{1+|k|^2}\|\FP \hat{u}\|_{L^2_\xi}^2+\la |k|^2 |\tilde{k}\cdot \hat{E}|^2
\\
\dis +\frac{\la |k|^2}{(1+|k|^2)^2} (|\tilde{k}\times \hat{E}|^2+|\tilde{k}\times \hat{B}|^2)\leq C\|\nu^{-1/2}\hat{\sourceG}\|_{L^2_\xi}^2,
\end{multline*}
since one has
$$
|\hat{a}_\pm|^2+|\hat{b}|^2+|\hat{c}|^2\sim \|\FP
\hat{u}\|_{L^2_\xi}^2.
$$
Noticing further that $|\hat{B}|^2=|\tilde{k}\times \hat{B}|^2$ due
to the fact that $B$ is divergence free,
\eqref{thm.tfli.2} follows.
\end{proof}


\subsection{Proof of time-decay of linear solutions}\label{sec.tf}
Our proof of Theorem \ref{thm.ls} is based on Theorem \ref{thm.tfli} and  {the further analysis of \eqref{thm.tfli.2} over the low and high frequency domains as follows. An alternative new time-frequency splitting method will be discussed at the end.}


\begin{proof}[Proof of Theorem \ref{thm.ls}]
Define the notations
\begin{equation}
\notag
    p(k)\eqdef
    \frac{\la |k|^2}{(1+|k|^2)^2},\quad
    \CE_{\sourceG}(t,k)\eqdef C\|\nu^{-1/2}\hat{\sourceG}\|_{L^2_\xi}^2.
\end{equation}
We rewrite \eqref{thm.tfli.2}, for any $t\geq 0$ and $k\in \R^3$, as
\begin{equation*}
    \pa_t \CE(t,k)+p(k) \CE(t,k)\leq \CE_{\sourceG}(t,k).
\end{equation*}
This implies from the Gronwall inequality, for any $t\geq 0$ and $k\in \R^3$, that
\begin{equation}\label{thm.ls.p02}
   \CE(t,k)\leq e^{-p(k)t}\CE(0,k)+\int_0^t e^{-p(k)(t-s)}  \CE_{\sourceG}(s,k)ds.
\end{equation}
Due to \eqref{thm.tfli.1}, notice that for any fixed $m\geq 0$ we have
\begin{equation}\label{thm.ls.p08}
    \|\na_x^m u(t)\|^2+\|\na_x^m[E(t),B(t)]\|^2\sim \int_{\R^3}|k|^{2m}\CE(t,k)dk.
\end{equation}
Now, to  first prove \eqref{thm.ls.1}, one can apply \eqref{thm.ls.p02} with $\sourceG=0$ and hence $\CE_{\sourceG}(t,k)=0$ to bound $\CE(t,k)$ as follows
\begin{equation}
\notag
 \int_{\R^3}|k|^{2m}\CE(t,k)dk\leq \left(\int_{|k|\leq 1}+\int_{|k|>1}\right)|k|^{2m}e^{-p(k)t}\CE(0,k)dk.
\end{equation}
Here, notice that for $|k|\leq 1$,
\begin{equation}
\notag
    p(k)=\frac{\la |k|^2}{(1+|k|^2)^2}\geq \frac{\la}{4}|k|^2,
\end{equation}
and for $|k|\geq 1$,
\begin{equation}  \notag
    p(k)\geq \frac{\la}{4|k|^2}.
\end{equation}
With these two estimates we have the upper bound
\begin{multline*}
 \int_{\R^3}|k|^{2m}\CE(t,k)dk \leq   \int_{|k|\leq 1}|k|^{2m}e^{-\frac{\la}{4}|k|^2t}\CE(0,k)dk\\
 +\int_{|k|\geq 1}|k|^{2m}e^{-\frac{\la t}{4|k|^2}}\CE(0,k)dk.
\end{multline*}
Here, as in \cite{Ka,Ka-BE13}, from the H\"{o}lder and Hausdorff-Young inequalities, the integration over $|k|\leq 1$ is bounded as
$$
 \int_{|k|\leq 1}|k|^{2m}e^{-\frac{\la}{4}|k|^2t}\CE(0,k)dk
 \leq  C(1+t)^{-3(\frac{1}{r}-\frac{1}{2})-m}\left(
\|u_0\|_{Z_r}^2+\|[E_0,B_0]\|_{L^r_x}^2\right),
$$
for $1\leq r \leq 2$.
See also, for instance, \cite{DS-VPB}.

The other integration over $|k|\geq 1$ is estimated as
\begin{equation}
\notag
 \int_{|k|\geq 1}|k|^{2m}e^{-\frac{\la t}{4|k|^2}}\CE(0,k)dk\leq  \int_{|k|\geq 1} |k|^{ {2m+2\ell}} \CE(0,k)dk\,
\sup_{|k|\geq 1} \frac{1}{|k|^{2\ell}}e^{-\frac{\la t}{4|k|^2}}.
\end{equation}
Since
\begin{equation}
\notag
    \sup_{|k|\geq 1} \frac{1}{|k|^{2\ell}}e^{-\frac{\la t}{4|k|^2}}\leq C_{\la,\ell}(1+t)^{-\ell},
\end{equation}
it follows that
\begin{equation}  \notag
 \int_{|k|\geq 1}|k|^{ {2m}}e^{-\frac{\la t}{4|k|^2}}\CE(0,k)dk\leq  C(1+t)^{-\ell}(\|\na_x^{m+\ell}u_0\|^2+\|\na_x^{m+\ell} [E_0,B_0]\|^2).
\end{equation}
Collecting the above estimates as well as \eqref{thm.ls.p08} gives \eqref{thm.ls.1}.

Similarly, to prove \eqref{thm.ls.2}, one can apply \eqref{thm.ls.p02} with $U_0=0$ and hence $\CE(0,k)=0$ to bound $\CE(t,k)$ as in the following
\begin{multline}
 \nonumber
  \int_{\R^3}|k|^{2m}\CE(t,k)dk
 \leq   \int_{\R^3}|k|^{2m}\left[\int_0^t e^{-p(k)(t-s)}  \CE_{\sourceG}(s,k)ds\right]dk
 \\
 =\int_0^t\left[\int_{\R^3}|k|^{2m} e^{-p(k)(t-s)}\CE_{\sourceG}(s,k)dk\right]ds.
\end{multline}
Recall $\CE_{\sourceG}(s,k)= C\|\nu^{-1/2}\hat{\sourceG}(s,k)\|_{L^2_\xi}^2$. Then, similarly as before, it follows that
\begin{multline*}
\int_{\R^3}|k|^{2m} e^{-p(k)(t-s)}\CE_{\sourceG}(s,k)dk
 \leq C (1+t-s)^{-3(\frac{1}{r}-\frac{1}{2})-m}\|\nu^{-1/2}\sourceG(s)\|_{Z_r}^2\\
 +C (1+t-s)^{-\ell} \|\nu^{-1/2} \na_x^{m+\ell}\sourceG(s)\|^2.
\end{multline*}
The above two estimates together with \eqref{thm.ls.p08} give \eqref{thm.ls.2}.
\end{proof}

\begin{remark}\label{time.f.m}
Our initial proof of the time-decay for linear solutions involved a new time-frequency splitting method.  The key point was to consider the sets
$$
\{ t \ge \lambda |k| \},
\quad
\{ t < \lambda |k| \},
$$
separately in the energy estimates.  In this approach, when $t$ is smaller than $|k|$ one can directly pay for the time decay by placing additional regularity on the initial data.  And when $t$ is larger than $|k|$ there is a gain to be exploited using the time weighted estimates.  This approach costs a few extra pages,  it could however be more robust for other systems in which derivative loss is present.
\end{remark}

\section{Energy estimates on the nonlinear system}\label{sec.energy}


The goal of this section is to prove some nonlinear energy estimates, specifically \eqref{prop.exi.3} in Proposition \ref{prop.exi} and
\eqref{thm.energy.1}, \eqref{thm.energy.2} in Theorem
\ref{thm.energy}. Combining the time-decay property of the
linearized system studied in Section \ref{sec.decayl}, these
nonlinear energy estimates can be used to deduce the time rates of
the corresponding energy functionals in \eqref{thm.energy.1} and
\eqref{thm.energy.2}, which will be shown in the next section.

\subsection{Dissipation on the electromagnetic field}

In this subsection, we prove Proposition \ref{prop.exi}. It suffices
to prove \eqref{prop.exi.3} due to \cite[Theorem 1]{S.VMB}. In fact,
from the proof of Theorem 1 in \cite{S.VMB}, there is $\CE_N(t)$
such that as long as $\CE_N(0)$ is small enough, one has
\begin{eqnarray}
\notag
\frac{d}{dt}\CE_N(t)&+&\sum_{1\leq |\al|\leq N}\|\nu^{\frac{1}{2}}\pa^\al u(t)\|^2\\
&+&\sum_{|\al|+|\be|\leq N}\|\nu^{\frac{1}{2}}\pa^\al_\be \{\FI-\FP\}u(t)\|^2+ \la \|E(t)\|^2\leq 0,\label{app.p1}
\end{eqnarray}
for any $t\geq 0$. Recall the definition \eqref{def.dNm} of $\CD_N(t)=\CD_{N,0}(t)$, the rest is to prove that the additional term
\begin{equation}  \notag
    \sum_{1\leq |\al|\leq N-1} \|\pa^\al [E(t),B(t)]\|^2,
\end{equation}
can be included in the dissipation rate in \eqref{app.p1}. Due to \cite[Lemma 6]{S.VMB} and the uniform-in-time smallness of $\CE_N(t)$ for  solutions,
\begin{equation}\label{app.p2}
    \sum_{|\al|\leq N-1}\|\pa^\al E\|^2\leq C\|\{\FI-\FP\} u\|^2+C\sum_{1\leq |\al|\leq N}\|\pa^\al u\|^2.
\end{equation}
 {This estimate can also be obtained from the following macroscopic balance law deduced from the difference of \eqref{m1.non} with $\pm$ sign:
\begin{multline*}
\pa_t \langle[\xi,-\xi]\FM^{1/2}, \{\FI-\FP\}u\rangle + {\na_x}  (a_+ - a_ -) -2E + {\na_x \cdot \Theta} (\{\FI-\FP\}u\cdot q_1)\\
=E (a_+ + a_-)+2b\times B +\langle [\xi,-\xi]\FM^{1/2},\FL u +\Ga(u,u)\rangle,
\end{multline*}
that is,
\begin{multline*}
2E=\pa_t \langle[\xi,-\xi]\FM^{1/2}, \{\FI-\FP\}u\rangle + {\na_x}  (a_+ - a_ -)  + {\na_x \cdot  \Theta } (\{\FI-\FP\}u\cdot q_1)\\
-E (a_+ + a_-)-2b\times B -\langle [\xi,-\xi]\FM^{1/2},\FL \{\FI-\FP\}u +\Ga(u,u)\rangle.
\end{multline*}
Since $E$ is the sum of some zero-order microscopic term of $\{\FI-\FP\}u$, first-order derivatives and quadratically nonlinear terms,  \eqref{app.p2} follows from the above representation of $E$, and it will be also refined in Lemma \ref{lem.bdd.EB} later on.} The dissipation estimate on $B(t)$ follows from that of $E(t)$ and the Maxwell system \eqref{VMB.pM1}-\eqref{VMB.pM2}. Recall
\begin{equation}\label{maxwell.u}
\begin{split}
&\pa_t E-\na_x\times B=-\langle[\xi,-\xi]\FM^{1/2}, \{\FI-\FP\}u\rangle,\\
&\pa_t B+\na_x\times E=0.
\end{split}
\end{equation}
Take $\al=(\al_0,\al_1,\al_2,\al_3)$ with $1\leq |\al|\leq N-1$. If
$\al_0>0$, one can use
\begin{equation}\label{max.B.est}
    \pa^\al B=\pa^{\al'}\pa_tB=-\pa^{\al'}\na_x\times E,
\end{equation}
for some $\al'$ with $|\al'|=|\al|-1$ to obtain
\begin{equation}  \notag
    \|\pa^\al B\|^2\leq \sum_{1\leq |\al|\leq N-1}\|\pa^\al E\|^2.
\end{equation}
Otherwise, when $\al_0=0$,
\begin{eqnarray*}
 \pa^\al B=\pa^{\al'}\pa_iB&=&-\pa^{\al'}\pa_i\De_x^{-1} {\big(\na_x\times(\na_x\times B)\big)}\\
 &=&-\pa^{\al'}\pa_i\De_x^{-1}\na_x\times [\pa_t E+\langle[\xi,-\xi]\FM^{1/2}, \{\FI-\FP\}u\rangle],
\end{eqnarray*}
for some $\al'$ and $1\leq i\leq 3$ with $|\al'|=|\al|-1$, where we used the vector identity
 {
$\De_x B=-\na_x\times \big(\na_x\times B\big)$}
since $B$ is divergence free. Notice that $\pa_i\De_x^{-1}\na_x$ is bounded from $L^p_x$ to itself for any $1<p<\infty$. Thus, in the case when $\al_0=0$, one has
\begin{equation} \notag
     \|\pa^\al B\|^2\leq C\sum_{1\leq |\al|\leq N-1}\|\pa^\al E\|^2 +C\sum_{|\al|\leq N-2}\| {\pa^\al}\{\FI-\FP\}u\|^2.
\end{equation}
Therefore, taking summation of estimates on $B$ over $1\leq |\al|\leq N-1$ gives
\begin{equation}\label{app.p3}
   \sum_{1\leq |\al|\leq N-1}  \|\pa^\al B\|^2\leq C\sum_{1\leq |\al|\leq N-1}\|\pa^\al E\|^2+C\sum_{|\al|\leq N-2}\| {\pa^\al}\{\FI-\FP\}u\|^2.
\end{equation}
Then \eqref{prop.exi.3} follows from the proper linear combination of \eqref{app.p1} together with \eqref{app.p2} and \eqref{app.p3}. \qed

\subsection{Velocity-weighted energy estimates} In this subsection, we are concerned with velocity-weighted energy estimates in the form of \eqref{thm.energy.1} for solutions to the nonlinear system \eqref{VMB.pB}-\eqref{VMB.pM3}. Recall the first part of Theorem \ref{thm.energy} in the following:

\medskip

\noindent{\bf Claim:} {\it  For any $m=0,1,2,\ldots$, there are
$\CE_{N,m}(t)$ and $\CD_{N,m}(t)$ such that if $\CE_{N,m-1}(0)$ is sufficiently small and $\CE_{N,m}(0)$ is finite then
\begin{eqnarray}
\frac{d}{dt}\CE_{N,m}(t)+\la \CD_{N,m}(t)\leq 0,
\label{app.vw.p01}
\end{eqnarray}
holds for any $t\geq 0$, where $\CE_{N,-1}(0)=\CE_{N}(0)$ is set and
$\la$ may depend on $m$.}

\medskip

The claim above can be proved by induction on $m\geq 0$. First, it
is obvious that due to Proposition \ref{prop.exi},  there is
$\CE_{N,0}(t)$ such that \eqref{app.vw.p01} holds for $m=0$ if
$\CE_{N,0}(0)$ is sufficiently small. Now, supposing that the claim
is true for some $m\geq 0$, we shall prove that it also holds for
$m+1$. For that, assume that $\CE_{N,m}(0)$ is sufficiently small.
Since $\CE_{N,m-1}(0)\leq C \CE_{N,m}(0)$, then $\CE_{N,m-1}(0)$ is
also sufficiently small. It then follows from the induction
assumption that
\begin{equation}\label{app.vw.p02}
    \frac{d}{dt}\CE_{N,m}(t)+\la \CD_{N,m}(t)\leq 0.
\end{equation}
This implies that $\CE_{N,m}(t)$ is non-increasing in $t$, and hence $\CE_{N,m}(t)$ is  sufficiently small uniformly in time. Now, our goal is to prove that there are $\CE_{N,m+1}(t)$, $\CD_{N,m+1}(t)$ such that if $\CE_{N,m+1}(0)$ is finite, then
\begin{equation}\label{app.vw.p03}
    \frac{d}{dt}\CE_{N,m+1}(t)+\la \CD_{N,m+1}(t)\leq 0,
\end{equation}
for any $t\geq 0$. We directly define $\CD_{N,m+1}(t)$ by \eqref{def.dNm}. Then, it remains to construct $\CE_{N,m+1}(t)$. In fact, we carry it out  along the lines of \cite[Subsection 4.1]{DS-VPB}
and \cite{S.VMB}.

\medskip

\noindent{\it Step 1.} We split the solution $u$ to equation \eqref{VMB.pB} into $u=\FP u + \{\FI-\FP\}u$ and take
$\{\FI-\FP\}$ of the resulting equation to obtain
\begin{multline}
\pa_t \{\FI-\FP\}u + \xi \cdot \na_x \{\FI-\FP\}u + q(E+\xi\times B)\cdot \na_\xi \{\FI-\FP\}u \label{app.vw.p04}
\\
=
\FL\{\FI-\FP\} u +\Ga(u,u)+\frac{q}{2}E\cdot \xi \{\FI-\FP\}u+\{\FI-\FP\}(E\cdot \xi\FM^{1/2}q_1)
 \\
-\{\FI-\FP\}(\xi \cdot \na_x \FP u + q(E+\xi\times B)\cdot \na_\xi \FP u-\frac{q}{2}E\cdot \xi \FP u)
\\
 +\FP (\xi \cdot \na_x \{\FI-\FP\}u + q(E+\xi\times B)\cdot \na_\xi \{\FI-\FP\}u-\frac{q}{2}E\cdot \xi \{\FI-\FP\}u).
\end{multline}
Multiplying the above equation by $\nu^{m+1}\{\FI-\FP\}u$ and  {integrating} in $x,\xi$:
\begin{equation}
\label{app.vw.p05.int1}
\frac{1}{2}\frac{d}{dt}\|\nu^{\frac{m+1}{2}}\{\FI-\FP\}u\|^2 -
 \langle \nu^{m+1}\FL\{\FI-\FP\} u,\{\FI-\FP\} u\rangle
 =
 \Ga_1+ \Ga_2 {+\Ga_3},
\end{equation}
where $ \Ga_1 =  \langle \nu^{m+1} \Ga(u,u),\{\FI-\FP\} u\rangle$,
and
\begin{multline*}
 \Ga_2 =
\left\langle \frac{q}{2}E\cdot \xi \{\FI-\FP\}u+\{\FI-\FP\}(E\cdot \xi\FM^{1/2}q_1),
\nu^{m+1}  \{\FI-\FP\}u
\right\rangle
 \\
-\left\langle
\{\FI-\FP\}(\xi \cdot \na_x \FP u + q(E+\xi\times B)\cdot \na_\xi \FP u-\frac{q}{2}E\cdot \xi \FP u),
\nu^{m+1}  \{\FI-\FP\}u
\right\rangle
\\
 +\left\langle
 \FP (\xi \cdot \na_x \{\FI-\FP\}u + q(E+\xi\times B)\cdot \na_\xi \{\FI-\FP\}u),
 \nu^{m+1}  \{\FI-\FP\}u  \right\rangle
 \\
 -\left\langle
 \FP \left(\frac{q}{2}E\cdot \xi \{\FI-\FP\}u\right),
 \nu^{m+1}  \{\FI-\FP\}u
\right\rangle,
\end{multline*}
 {and}
\begin{equation*}
     {\Ga_3=\langle -q(E+\xi\times B)\cdot \na_\xi \{\FI-\FP\}u),\nu^{m+1}  \{\FI-\FP\}u  \rangle.}
\end{equation*}
Here and in the sequel, for simplicity of notations, we also use
$\langle\cdot,\cdot\rangle$ to denote the inner product over
$L^2_{x,\xi}$ without any confusion. We will estimate each of the
three terms in \eqref{app.vw.p05.int1}.

These estimates rely on three important observations. The first observation is that from \cite[Lemma 7]{S.VMB} (except for the presence of the momentum weight)
and also \cite[Lemma 3.3]{DUYZ} (to handle the momentum weight) we have
$$
\left| \Ga_1 \right| \le
C\sqrt{\CE_{N,m}(t)}\CD_{N,m+1}(t).
$$
The second key observation is that
$$
 - \langle \nu^{m+1}\FL\{\FI-\FP\} u,\{\FI-\FP\} u\rangle
 \ge \la \|\nu^{\frac{m+2}{2}}\{\FI-\FP\}u\|^2
 -
 C_\lambda  \|\{\FI-\FP\}u\|^2.
$$
This follows from $\FL = - \nu + K$, and the standard compact interpolation estimate for $K$:
$
\left| \langle \nu^{m+1}K f, f\rangle \right|
 \le \eta \|\nu^{\frac{m+2}{2}}f\|^2
 +
 C_\eta  \|f\|^2,
$
which holds for any small $\eta >0$.

The third useful observation is the Sobolev embedding trick used for instance in \cite{S.VMB}.
Specifically
we combine the $L^6({\mathbb R}^3)$ Sobolev inequality for gradients
with the embedding $W^{1,6}({\mathbb R}^3)\subset L^\infty ({\mathbb R}^3)$
 to obtain
\begin{multline}
\esssup_{x\in\mathbb{R}^3} \int_{{\mathbb R}^3} |u(x,\xi)|^2d\xi \le
C\int_{{\mathbb R}^3} \|u \|^2_{L^\infty({\mathbb R}^3_x)}d\xi
\\
\le C\int_{{\mathbb R}^3} \|\nabla_x u\|^2_{H^1({\mathbb
R}^3_x)}d\xi = C \|\nabla_x u\|^2_{\mathcal{H}^{1}}.
\label{sobolev.trick}
\end{multline}
This allows us to control all of the cubic terms in $\Ga_2$ without derivatives.
 {
We will also use this estimate when only the spatial variables $\R^3_x$ are present.
}

Using   {Cauchy-Schwarz}, we easily obtain the bound
 {
\begin{multline*}
\left|  \Ga_2  \right|
\le
C\|E \|_{L^\infty({\mathbb R}^3_{x})}\|\nu^{\frac{m+2}{2}}\{\FI-\FP\}u\|^2
+
C\|E \| \|\nu^{\frac{1}{2}}\{\FI-\FP\}u\|
\\
+
C\| \na_x \FP u \| \|\nu^{\frac{1}{2}}\{\FI-\FP\}u\|
+
C\|[E,B] \|_{L^\infty({\mathbb R}^3_{x})}
\|\FP u\|
\|\nu^{\frac{1}{2}}\{\FI-\FP\}u\|
\\
+C \|\nu^{\frac{1}{2}}\na_x \{\FI-\FP\}u\| \|\nu^{\frac{1}{2}}\{\FI-\FP\}u\|\\
+C\|[E,B] \|_{L^\infty({\mathbb R}^3_{x})}
\|\nu^{\frac{m+2}{2}}\na_\xi \{\FI-\FP\}u\|  \|\nu^{\frac{m+2}{2}}\{\FI-\FP\}u\|
\\
 +
 \|E \|_{L^\infty({\mathbb R}^3_{x})}
 \|\nu^{\frac{1}{2}}\{\FI-\FP\}u\|^2.
\end{multline*}
In several places in the estimate above, the velocity growth, $\xi$, is absorbed by the projections $\FP$.  Then it follows from this last estimate and \eqref{sobolev.trick} that
}
$$
\left| \Ga_2 \right|
\le
 C\CD_{N,m}(t)+ C\sqrt{\CE_{N,m}(t)}\CD_{N,m+1}(t).
$$
 {To estimate $\Ga_3$, notice}
\begin{equation*}
     {\na_\xi\{\FI-\FP\}u=(\na_\xi \nu^{-\frac{m+1}{2}}) \nu^{\frac{m+1}{2}}\{\FI-\FP\}u
    +\nu^{-\frac{m+1}{2}}\na_\xi( \nu^{\frac{m+1}{2}}\{\FI-\FP\}u),}
\end{equation*}
 {where the {second term on the right contributes} nothing into $\Ga_3$. Then, one has}
\begin{multline*}
 { \Ga_3 =  \langle -q(E+\xi\times B)\cdot (\na_\xi \nu^{-\frac{m+1}{2}})\nu^{\frac{m+1}{2}}\{\FI-\FP\}u,\nu^{m+1}  \{\FI-\FP\}u  \rangle}\\
  {  =\langle \frac{q(m+1)(E+\xi\times B)}{2\nu}\cdot (\na_\xi \nu)\nu^{\frac{m+1}{2}}\{\FI-\FP\}u,\nu^{\frac{m+1}{2}} \{\FI-\FP\}u  \rangle}\\
  {  \leq  C\|[E,B]\|_{L^\infty(\R^3_x)}\|\nu^{\frac{m+1}{2}} \{\FI-\FP\}u\|^2\leq C\sqrt{\CE_{N,m}(t)}\CD_{N,m}(t).}
\end{multline*}
 {Note that we have used  $\nu(\xi)\sim (1+|\xi|^2)^{1/2}$ and
$\left|\na_\xi \nu(\xi)
\right|
\le C$.}
We collect the estimates in this section to achieve the final estimate of
\begin{eqnarray}
\frac{1}{2}\frac{d}{dt}\|\nu^{\frac{m+1}{2}}\{\FI-\FP\}u\|^2&+&\la
\|\nu^{\frac{m+2}{2}}\{\FI-\FP\}u\|^2
\label{app.vw.p05}\\
&\leq& C\CD_{N,m}(t)+ C\sqrt{\CE_{N,m}(t)}\CD_{N,m+1}(t).
\notag
\end{eqnarray}
This completes the first step in our proof of \eqref{app.vw.p01}.
\medskip

\noindent{\it Step 2.} Let $1\leq |\al|\leq N$. Applying $\pa^\al$ to \eqref{VMB.pB} and writing $\FL=-\nu+K$, one has
\begin{multline*}
\pa_t\pa^\al u+\xi\cdot \na_x\pa^\al u +q(E+\xi\times B)\cdot \na_\xi \pa^\al u+\nu\pa^\al u\\
=K\pa^\al u+\pa^\al\Ga(u,u)+\frac{q}{2}\pa^\al(E\cdot\xi u)+\pa^\al E\cdot \xi \FM^{1/2}q_1\\
 - q\comml \pa^\al, (E+\xi\times B)\cdot \na_\xi \commr u,
\end{multline*}
where $\comml\,\cdot , \, \cdot \commr$ denotes the usual  commutator.  Multiplying the above equation by $\nu^{m+1}\pa^\al u$, and taking integrations in $x,\xi$ one has
$$
\frac{1}{2}\frac{d}{dt}\|\nu^{\frac{m+1}{2}}\pa^\al u\|^2+\|\nu^{\frac{m+2}{2}}\pa^\al u\|^2
= J_1 + J_2.
$$
In this expression we  have used
\begin{eqnarray*}
 J_1 &\eqdef&
 \left\langle K\pa^\al u+\pa^\al\Ga(u,u), \nu^{m+1}\pa^\al u \right\rangle
 +
 \left\langle  (E+\xi\times B)\cdot \na_\xi \nu^{m+1}, |q\pa^\al u|^2 \right\rangle,
 \\
 J_2  &\eqdef&
  \left\langle \frac{q}{2}\pa^\al(E\cdot\xi u)+\pa^\al E\cdot \xi \FM^{1/2}q_1  - q\comml \pa^\al, (E+\xi\times B)\cdot \na_\xi \commr u, \nu^{m+1}\pa^\al u \right\rangle.
\end{eqnarray*}
Now using the estimates from Step 1,  {Cauchy-Schwarz}, and the Sobolev embedding \eqref{sobolev.trick}, since $|\alpha| \ge 1$, one has the estimates
$$
\left| J_1 \right| + \left| J_2 \right|
\le
C\CD_{N,m}(t)+ C\sqrt{\CE_{N,m}(t)}\CD_{N,m+1}(t).
$$
We collect these estimates, and take summation over $1\leq |\al|\leq N$, to obtain
\begin{eqnarray}
\frac{1}{2}\frac{d}{dt}\sum_{1\leq |\al|\leq N}\|\nu^{\frac{m+1}{2}}\pa^\al u\|^2&+&\la \sum_{1\leq |\al|\leq N}\|\nu^{\frac{m+2}{2}}\pa^\al u\|^2\label{app.vw.p06}\\
&\leq& C\CD_{N,m}(t)+ C\sqrt{\CE_{N,m}(t)}\CD_{N,m+1}(t).
\notag
\end{eqnarray}
This is the main energy inequality in the second step in our proof of \eqref{app.vw.p01}.

\medskip

\noindent{\it Step 3.}  Let $|\al|+|\be|\leq N$ with $|\be|\geq 1$. Applying $\pa^\al_\be$ to \eqref{app.vw.p04} and writing $\FL=-\nu+K$, we observe that $v=\pa^\al_\be \{\FI-\FP\} u$ satisfies
\begin{equation}
\pa_t v+\xi\cdot \na_x v +q(E+\xi\cdot B)\cdot \na_\xi v + \nu v=I_1+I_2+I_3,\label{app.vw.p07}
\end{equation}
where $I_i$, $i=1,2,3$,  {are defined by}
\begin{eqnarray*}
I_1&=&\pa_\be K\left(\pa^\al \{\FI-\FP\}u\right)
+
\pa_\be^\al \Ga(u,u)
\\
&&+\frac{q}{2}\pa_\be^\al(E\cdot \xi \{\FI-\FP\}u)+\pa_\be\{\FI-\FP\}(\pa^\al E\cdot \xi \FM^{1/2}q_1),\\
I_2&=&-\pa_\be^\al\{\FI-\FP\}(\xi \cdot \na_x \FP u + q(E+\xi\times B)\cdot \na_\xi \FP u-\frac{q}{2}E\cdot \xi \FP u)\nonumber \\
&&+\pa_\be^\al\FP (\xi \cdot \na_x \{\FI-\FP\}u + q(E+\xi\times B)\cdot \na_\xi \{\FI-\FP\}u
-\frac{q}{2}E\cdot \xi \{\FI-\FP\}u),
\end{eqnarray*}
and
\begin{multline*}
  I_3 = -\comml \pa_\be,\xi\cdot \na_x \commr \pa^\al \{\FI-\FP\} u -\comml \pa_\be, \nu(\xi) \commr \pa^\al\{\FI-\FP\} u  \\
  - q\comml \pa_\be^\al, (E+\xi\times B)\cdot \na_\xi \commr \{\FI-\FP\} u.
\end{multline*}
Multiplying \eqref{app.vw.p07} by $\nu^{m+1} v$, and integrating over $x,\xi$ we have
$$
\frac{1}{2}\frac{d}{dt}\|\nu^{\frac{m+1}{2}}\pa^\al_\be \{\FI-\FP\} u\|^2+\|\nu^{\frac{m+2}{2}}\pa^\al_\be \{\FI-\FP\} u\|^2
=
\sum_{j=1}^4\tilde{I}_j.
$$
Above $\tilde{I}_j \eqdef \langle I_j, \nu^{m+1} v\rangle$ for $j=1,2,3$ and
$
\tilde{I}_4 \eqdef  \left\langle  (E+\xi\times B)\cdot \na_\xi \nu^{m+1}, |q v|^2 \right\rangle.
$
Exactly as in the analogous estimate in Step 2, we have
$$
 {
\left| \tilde{I}_4 \right|
\le
 C\sqrt{\CE_{N,m}(t)}\CD_{N,m+1}(t).
 }
$$
From, for instance,
\cite[Lemma 2.1]{Guo2}, we see that
$
\left|
\pa_\be \nu(v)
\right|
\le C
$
so that
$$
\left| \tilde{I}_3 \right|
\le
C\CD_{N,m}(t)+ C\sqrt{\CE_{N,m}(t)}\CD_{N,m+1}(t).
$$
For this we used  {Cauchy-Schwarz} and the Sobolev embedding, as usual.

We may use the estimate such as \cite[Lemma 2.2]{Guo2} to see that $\forall \eta >0$:
\begin{multline*}
\left|
\left\langle  \pa_\be K\left(\pa^\al \{\FI-\FP\}u\right), \nu^{m+1} \pa^\al_\be \{\FI-\FP\} u \right\rangle
\right|
\le
\eta \sum_{|\be'|= |\beta|}\|\nu^{\frac{ {m+2}}{2}}\pa^\al_{\be'} \{\FI-\FP\} u\|^2
\\
+C_\eta \|\nu^{\frac{m+1}{2}}\pa^\al \{\FI-\FP\} u\|^2.
\end{multline*}
Strictly speaking, in \cite[Lemma 2.2]{Guo2} there is no velocity weight: $\nu^{m+1}$.  However this can be added to the proof directly without difficulty.  Furthermore,
$$
\left|
\left\langle  \pa_\be^\al \Ga(u,u), \nu^{m+1} \pa^\al_\be \{\FI-\FP\} u \right\rangle
\right|
\le
C\sqrt{\CE_{N,m}(t)}\CD_{N,m+1}(t).
$$
This is the content of \cite[Lemma 7]{S.VMB}, the same comment applies for the weight.  With these estimates, similar to the estimates in Step 2, we have that
$$
\left| \tilde{I}_1 \right| + {\left| \tilde{I}_2 \right|}
\le
C\CD_{N,m}(t)+ C\sqrt{\CE_{N,m}(t)}\CD_{N,m+1}(t)
+
\eta \sum_{|\be'|= |\beta|}\|\nu^{\frac{ {m+2}}{2}}\pa^\al_{\be'} \{\FI-\FP\} u\|^2.
$$
Since $\eta >0$ can be taken arbitrarily small, we add together each of these estimates
and further take a summation over $|\al|+|\be|\leq N$ with $|\be|\geq 1$ to obtain
\begin{multline}
\frac{1}{2}\frac{d}{dt}\sum_{\substack{|\al|+|\be|\leq N\\ |\be|\geq 1}} {C_{\al,\be}}\|\nu^{\frac{m+1}{2}}\pa^\al_\be \{\FI-\FP\} u\|^2+\la \sum_{\substack{|\al|+|\be|\leq N\\ |\be|\geq 1}}\|\nu^{\frac{m+2}{2}}\pa^\al_\be \{\FI-\FP\} u\|^2\label{app.vw.p08}\\
\leq
C\CD_{N,m}(t)+ C\sqrt{\CE_{N,m}(t)}\CD_{N,m+1}(t),
\end{multline}
 {where $C_{\al,\be}$ are some positive constants.} This is the third and final estimate which we need to prove \eqref{app.vw.p01}.

\medskip

\noindent Now, let us define
\begin{eqnarray*}
\CE_{N,m+1}(t)&=& \CE_{N,m}(t)+\kappa_1 \|\nu^{\frac{m+1}{2}}\{\FI-\FP\}u\|^2+\kappa_1 \sum_{1\leq |\al|\leq N}\|\nu^{\frac{m+1}{2}}\pa^\al u\|^2\\
&&+\kappa_2 \sum_{\substack{|\al|+|\be|\leq N\\ |\be|\geq 1}} {C_{\al,\be}}\|\nu^{\frac{m+1}{2}}\pa^\al_\be \{\FI-\FP\} u\|^2,
\end{eqnarray*}
for properly small constants  {$0< \kappa_2\ll \kappa_1\ll 1$} to be chosen later.  Notice that for $0<\kappa_1, \kappa_2<1$, \eqref{def.eNm} holds true for $m+1$. By letting
 {$0< \kappa_2\ll \kappa_1\ll 1$} be  small enough, the sum of  \eqref{app.vw.p02} and \eqref{app.vw.p05}$\times \kappa_1$, \eqref{app.vw.p06}$\times \kappa_1$, \eqref{app.vw.p08}$\times \kappa_2$ implies that there is a small enough constant $\la>0$
such that
\begin{equation}  \notag
    \frac{d}{dt}\CE_{N,m+1}(t)+\la \CD_{N,m+1}(t)\leq C\sqrt{\CE_{N,m}(t)}\CD_{N,m+1}(t).
\end{equation}
Recall that $\CE_{N,m}(t)$ is sufficiently small uniformly in time and $\CE_{N,m+1}(0)$ is finite. Then, it follows that for any $t\geq 0$, $\CE_{N,m+1}(t)$ is finite and satisfies \eqref{app.vw.p03}. Hence, the claim is true for all $m\geq 0$.
We have shown \eqref{app.vw.p01} and  \eqref{thm.energy.1}.   \qed

\subsection{High-order energy estimates}

In this subsection, we consider the proof of the second part of Theorem \ref{thm.energy}. The goal is to construct a high-order instant energy functional $\CE_{N}^{\rm h}(t)$ satisfying the energy inequality \eqref{thm.energy.2} if $\CE_{N}(0)$ is sufficiently small. For that, we suppose that $\CE_{N}(0)$ is sufficiently small through this subsection. Due to \eqref{prop.exi.3}, $\CE_{N}(t)$ is also sufficiently small uniformly in time. Recall also the definition \eqref{def.dNm} of $\CD_N(t)$.

\medskip

\noindent{\it Step 1.} From the system \eqref{VMB.pB}-\eqref{VMB.pM3}, the usual energy as in \cite{S.VMB} gives
\begin{eqnarray}
&\dis \frac{1}{2}\frac{d}{dt}\sum_{1\leq |\al|\leq N}(\|\pa^\al u\|^2+\|\pa^\al [E,B]\|^2)+ \la \sum_{1\leq |\al|\leq N}
\|\nu^{\frac{1}{2}}\pa^\al\{\FI-\FP\} u\|^2\label{app.ho.p01}\\
&\dis \leq C \left(\CE_{N}(t)+\sqrt{\CE_{N}(t)}\right)
\CD_N(t).
\notag
\end{eqnarray}
Multiply equation \eqref{app.vw.p04} by $\{\FI-\FP\}u$,  {integrate it in $x,\xi$ and then use \eqref{VMB.pM1}} to additionally obtain
\begin{multline}\label{app.ho.p02}
\frac{1}{2}\frac{d}{dt}(\|\{\FI-\FP\}u\|^2 {+\|E\|^2})+\la \|\nu^{1/2}\{\FI-\FP\} u\|^2\\
\leq   {\int_{\R^3}E\cdot \na_x\times B dx} +C\|\nu^{1/2}\na_x\FP u\|^2+C\sqrt{\CE_{N}(t)}\CD_N(t).
\end{multline}
 {Furthermore, similar to \eqref{app.vw.p08} from the energy estimate on \eqref{app.vw.p04}, one has}
\begin{multline}
\frac{1}{2}\frac{d}{dt}\sum_{\substack{|\al|+|\be|\leq N\\ |\be|\geq 1}}C_{\al,\be}\|\pa^\al_\be \{\FI-\FP\} u\|^2+\la \sum_{\substack{|\al|+|\be|\leq N\\ |\be|\geq 1}}\|\nu^{\frac{1}{2}}\pa^\al_\be \{\FI-\FP\} u\|^2\label{app.ho.p03}\\
\leq C\sum_{|\al|\leq N}\|\nu^{\frac{1}{2}}\pa^\al \{\FI-\FP\} u\|^2+ C \sum_{|\al|\leq N-1}(\|\pa^\al \na_x\FP u\|^2+\|\pa^\al E\|^2)\\
+ C (\CE_{N}(t)+\sqrt{\CE_{N}(t)})\CD_N(t),
\end{multline}
where $C_{\al,\be}$ are some positive constants.  {For completeness, we give the proof of the above two energy inequalities as follows.}

\medskip

\noindent{\it Proof of \eqref{app.ho.p02} and \eqref{app.ho.p03}:} We first prove \eqref{app.ho.p02}. In fact, \eqref{app.vw.p04} implies
\begin{multline}\label{app.ho.p02.p1}
\frac{1}{2}\frac{d}{dt}\|\{\FI-\FP\}u\|^2 + \langle -\FL\{\FI-\FP\} u,\{\FI-\FP\}u\rangle\\
=\langle \{\FI-\FP\}(E\cdot \xi \FM^{1/2}q_1),\{\FI-\FP\} u\rangle
+\langle -\{\FI-\FP\}(\xi \cdot \na_x \FP u),  \{\FI-\FP\} u\rangle\\
+\langle \Ga(u,u)+\frac{q}{2}E\cdot \xi \{\FI-\FP\}u,\{\FI-\FP\} u\rangle\\
+\langle-\{\FI-\FP\}(q(E+\xi\times B)\cdot \na_\xi \FP u-\frac{q}{2}E\cdot \xi \FP u),\{\FI-\FP\} u\rangle,
\end{multline}
where both the third and fourth terms on the right-hand side are bounded by $C\sqrt{\CE_{N}(t)}\CD_N(t)$, and the  {second term on the right} is bounded by
\begin{equation*}
  \eta\|\nu^{1/2}\{\FI-\FP\} u\|^2+\frac{C}{\eta}\|\nu^{1/2}\na_x\FP u\|^2,
\end{equation*}
for {any small constant $\eta>0$}. To estimate the {first term on the right-hand side}, we use  equation \eqref{VMB.pM1} of $E$ so as to re-write it by
\begin{multline*}
\langle \{\FI-\FP\}(E\cdot \xi \FM^{1/2}q_1),\{\FI-\FP\} u\rangle = \int_{\R^3} E\cdot \langle[\xi,-\xi]\FM^{1/2},\{\FI-\FP\} u\rangle dx\\
= \int_{\R^3} E\cdot (-\pa_t E +\na_x\times B)dx=-\frac{1}{2}\frac{d}{dt}\|E\|^2+\int_{\R^3}E\cdot \na_x\times B dx.
\end{multline*}
Therefore, \eqref{app.ho.p02} follows by plugging the above estimates into \eqref{app.ho.p02.p1}. We now turn to the proof of  \eqref{app.ho.p03}. As in the proof of \eqref{app.vw.p08}, take $\al,\be$ with $|\al|+|\be|\leq N$ and $|\be|\geq 1$. Applying $\pa^\al_\be$ to \eqref{app.vw.p04} and integrating it in $x,\xi$ gives
\begin{equation}\label{app.ho.p03.p1}
    \frac{1}{2}\frac{d}{dt}\|\pa^\al_\be\{\FI-\FP\}u\|^2+\|\nu^{1/2}\pa^\al_\be\{\FI-\FP\}u\|^2=\sum_{i=1}^3\langle I_i,\pa^\al_\be\{\FI-\FP\}u\rangle,
\end{equation}
where $I_i$, $i=1,2,3$, are the same as those three terms on the right-hand side of \eqref{app.vw.p07}. The right-hand side terms of \eqref{app.ho.p03.p1} can be estimated as follows:
\begin{multline*}
  \langle I_1,\pa^\al_\be\{\FI-\FP\}u\rangle \leq  \eta\sum_{|\be'|=|\be|}\| {\nu^{1/2}} \pa^\al_{\be'}\{\FI-\FP\}u\|^2\\
  +C_\eta (\|\pa^\al\{\FI-\FP\}u\|^2+\|\pa^\al E\|^2)
+C\sqrt{\CE_N(t)}\CD_N(t),
\end{multline*}
\begin{multline*}
  \langle I_2,\pa^\al_\be\{\FI-\FP\}u\rangle \leq  \eta\|\pa^\al_{\be}\{\FI-\FP\}u\|^2\\
  +C_\eta (\|{\nu^{1/2}}\pa^\al \na_x\FP u\|^2+\|\pa^\al\na_x\{\FI-\FP\}u\|^2)
+C\sqrt{\CE_N(t)}\CD_N(t),
\end{multline*}
and
\begin{multline*}
  \langle I_3,\pa^\al_\be\{\FI-\FP\}u\rangle \leq  \eta\|\pa^\al_{\be}\{\FI-\FP\}u\|^2\\
  +C_\eta\sum_{\substack{|\al'|+|\be'|\leq N\\ |\be'|\leq  |\be|-1}}\|\nu^{1/2}\pa^{\al'}_{\be'} \{\FI-\FP\} u\|^2
+C\sqrt{\CE_N(t)}\CD_N(t),
\end{multline*}
for a constant $\eta>0$ small enough. Therefore, \eqref{app.ho.p03} follows from multiplying  \eqref{app.ho.p03.p1} by properly chosen large constants $C_{\al,\be}>0$ and taking summation over $\{|\al|+|\be|\leq N,|\be|\geq 1\}$. This completes the proof of \eqref{app.ho.p02} and \eqref{app.ho.p03}. \qed
\medskip

\noindent{\it Step 2.} As in the linearized case, for simplicity, we still denote
\begin{equation}
\notag
   G=\langle [\xi,-\xi]\FM^{1/2}, \{\FI-\FP\} u\rangle=\langle \xi\FM^{1/2}, \{\FI-\FP\} u\cdot q_1\rangle.
\end{equation}
Corresponding to \eqref{macro.1} and \eqref{macro.2} for the linearized version, one has from the balance laws \eqref{m0.non}-\eqref{m2.non}
and the high-order moment equations \eqref{m2ii.non}-\eqref{m3.non} in the nonlinear case that
\begin{equation}\label{macro.non1}
    \left\{
    \begin{array}{l}
      \dis \pa_t\left(\frac{a_++a_-}{2}\right)+\na_x\cdot b=0,\\
      \dis \pa_t b_i+\pa_i\left(\frac{a_++a_-}{2}+2c\right)+\frac{1}{2}\sum_{j=1}^3\pa_j\highG_{ij}(\{\FI-\FP\}u\cdot [1,1])\\
      \dis \hspace{3.5cm}
      = {E_i\frac{a_+-a_-}{2} +[G\times B]_i},\\
      \dis \pa_t c+ \frac{1}{3}\na_x\cdot b +\frac{5}{6}\sum_{i=1}^3\partial_i \highB_i(\{\FI-\FP\}u\cdot [1,1])=\frac{1}{6}G\cdot E,
    \end{array}\right.
\end{equation}
and
\begin{equation}\label{macro.non2}
    \left\{
    \begin{array}{l}
      \dis \pa_t \left[\frac{1}{2}\highG_{ij}(\{\FI-\FP\}u\cdot [1,1]) +2c\de_{ij}\right]\\
      \dis \hspace{1.5cm}+\pa_i b_j+\pa_j b_i
      =
      \frac{1}{2}\highG_{ij}((l_++l_-)+(\sourceG_++\sourceG_-)),\\
      \dis \frac{1}{2}\pa_t \highB_i(\{\FI-\FP\}u\cdot [1,1])+\pa_i c=\frac{1}{2}\highB_i((l_++l_-)+(\sourceG_++\sourceG_-)),
    \end{array}\right.
\end{equation}
for $ 1\leq i,j\leq 3$, where $l_\pm$ is still defined in \eqref{def.l} and $g$ is defined in \eqref{def.g.non}.  Furthermore, by using the third equation of \eqref{macro.non1} to replace $\pa_tc$ in the first equation of  \eqref{macro.non2}, one has
\begin{multline}\label{macro.non.bij}
\frac{1}{2}\pa_t\highG_{ij}(\{\FI-\FP\}u\cdot [1,1])+\pa_i b_j+\pa_j b_i-\frac{2}{3}\de_{ij}\na_x \cdot b \\
-\frac{5}{3}\de_{ij}\na_x\cdot \highB(\{\FI-\FP\}u\cdot [1,1])\\
      =\frac{1}{2}\highG_{ij}((l_++l_-)+(\sourceG_++\sourceG_-)) {-\frac{1}{3}\de_{ij}G\cdot E}.
\end{multline}
Similarly, corresponding to \eqref{m0-} and \eqref{m1-},  it follows from the balance laws \eqref{m0.non} and \eqref{m1.non} that
\begin{eqnarray}
&&\pa_t (a_+-a_-)+\na_x\cdot G=0,\label{m0-.non}\\
\notag
&&\pa_t G + \na_x (a_+-a_-)-2E+ \na_x\cdot \highG (\{\FI-\FP\}u\cdot q_1)\\
&&\qquad\qquad=E(a_++a_-)+2b\times B {+\langle [\xi,-\xi]\FM^{1/2},\FL u +\Ga(u,u)\rangle}.\label{m1-.non}
\end{eqnarray}
We also recall
\begin{equation}\label{m.div.e}
    \na_x\cdot E=a_+-a_-.
\end{equation}


\begin{lemma}\label{lem.non.ma.diss}
One has the following four estimates
\begin{equation}\label{lem.non.ma.diss.1}
    \sum_{|\al|\leq N-1}\|\pa^\al \na_x [b,c]\|^2\leq C\sum_{|\al|\leq N}\|\pa^\al \{\FI-\FP\}u\|^2+C\CE_{N}(t)\CD_N(t),
\end{equation}
\begin{multline}
\label{lem.non.ma.diss.3}
\frac{d}{dt}\sum_{1\leq |\al|\leq N-1}\int_{\R^3}\pa^\al b\cdot \pa^\al \na_x (a_++a_-)dx
\\
+\la \sum_{1\leq |\al|\leq N-1}\|\pa^\al \na_x (a_++a_-)\|^2
\\
\leq C \sum_{|\al|\leq N}\|\pa^\al \{\FI-\FP\}u\|^2+C\CE_{N}(t)\CD_N(t),
\end{multline}
\begin{multline}
\label{lem.non.ma.diss.2}
\frac{d}{dt}\sum_{|\al|\leq N-1}\int_{\R^3}\pa^\al G\cdot \pa^\al \na_x (a_+-a_-)dx\\
+\la \sum_{|\al|\leq N-1}(\|\pa^\al \na_x (a_+-a_-)\|^2+\|\pa^\al(a_+-a_-)\|^2)\\
\leq C \sum_{|\al|\leq N}\|\pa^\al \{\FI-\FP\}u\|^2+C\CE_{N}(t)\CD_N(t),
\end{multline}
and
\begin{multline}
\label{lem.non.ma.diss.4}
 \sum_{|\al|\leq N-1}\|\pa^\al \pa_t[a_+\pm a_-,b,c]\|^2\leq C\sum_{|\al|\leq N-1}\|\pa^\al \na_x (a_+ + a_-)\|^2\\
 +C \sum_{|\al|\leq N}\|\pa^\al \{\FI-\FP\}u\|^2+C\CE_{N}(t)\CD_N(t),
\end{multline}
for any $t\geq 0$.
\end{lemma}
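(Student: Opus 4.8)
The plan is to treat the nonlinear moment system \eqref{macro.non1}, \eqref{macro.non2}, \eqref{macro.non.bij}, \eqref{m0-.non}, \eqref{m1-.non} together with the Maxwell relation \eqref{m.div.e} as a first-order hyperbolic system for the macroscopic coefficients $a_\pm$, $b$, $c$ coupled to the high-order moments $\highG(\{\FI-\FP\}u)$, $\highB(\{\FI-\FP\}u)$ and to $[E,B]$, and to recover the missing macroscopic dissipation along the lines of \cite{DS-VPB} and \cite{D-Hypo}; the only genuinely new feature is the electromagnetic coupling, which must be absorbed separately. Throughout, every quadratic term that appears — the products $E\,a_\pm$, $b\times B$, $G\cdot E$, $G\times B$, and the contributions of $\Ga(u,u)$ to $\highG$, $\highB$ and to $\langle[\xi,-\xi]\FM^{1/2},\Ga(u,u)\rangle$ — is bounded by $\|[E,B]\|_{L^\infty_x}(\cdots)$ or $\|u\|_{L^\infty_x}(\cdots)$ and then, using the Sobolev inequality \eqref{sobolev.trick} and the uniform-in-time smallness of $\CE_N(t)$, by $C\CE_N(t)\CD_N(t)$; I will not keep track of such terms below. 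Every remaining term is a velocity moment of $\{\FI-\FP\}u$, of $\na_x\{\FI-\FP\}u$, or of $\FL\{\FI-\FP\}u$, and each of these is dominated by $C\sum_{|\al|\le N}\|\pa^\al\{\FI-\FP\}u\|^2$ once $\pa^\al$ with $|\al|\le N-1$ is applied.

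For \eqref{lem.non.ma.diss.1} I would read off $\na_x c$ from the $\highB_i$ equation in \eqref{macro.non2} and the traceless symmetric part of $\na_x b$ from \eqref{macro.non.bij}. The obstruction is the time-derivative terms $\pa_t\highB_i(\{\FI-\FP\}u)$ and $\pa_t\highG_{ij}(\{\FI-\FP\}u)$; I dispose of them by inserting the equation \eqref{app.vw.p04} for $\pa_t\{\FI-\FP\}u$ into these moments and noting that the only dangerous contribution, the one coming from the term $-\{\FI-\FP\}(\xi\cdot\na_x\FP u)$ in \eqref{app.vw.p04}, reproduces exactly $\pa_i c$ (respectively $\pa_i b_j+\pa_j b_i$) up to a fixed fraction, which can therefore be absorbed into the left-hand side; the forcing $E\cdot\xi\FM^{1/2}q_1$ drops out since its $\highG_{ij}$- and $\highB_i$-moments vanish by parity. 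The terms that survive are of the type described in the previous paragraph. Finally, the full gradient of $b$ is recovered from the whole-space Korn-type inequality which bounds $\|\pa^\al\na_x b\|^2$ by twice the $L^2$ norm squared of the traceless symmetric part of $\pa^\al\na_x b$ — a consequence of the identity $\int_{\R^3}\pa_i b_j\,\pa_j b_i\,dx=\|\na_x\cdot b\|^2$ — applied for $|\al|\le N-1$.

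For \eqref{lem.non.ma.diss.3} and \eqref{lem.non.ma.diss.2} I would differentiate in $t$ the interaction functionals that already appear on their left-hand sides. For $\sum_{1\le|\al|\le N-1}\int_{\R^3}\pa^\al b\cdot\pa^\al\na_x(a_++a_-)\,dx$, substituting the $b$-equation of \eqref{macro.non1} for $\pa_t b$ turns the first piece into $-\tfrac12\sum\|\pa^\al\na_x(a_++a_-)\|^2$ plus terms carrying $\pa^\al\na_x c$ and $\pa^\al\na_x\cdot\highG(\{\FI-\FP\}u)$, absorbed by a small multiple of the leading term and by \eqref{lem.non.ma.diss.1}; substituting the continuity law \eqref{macro.non1}$_1$ for $\pa_t(a_++a_-)$ turns the second piece into $+2\sum\|\pa^\al\na_x\cdot b\|^2$, again controlled by \eqref{lem.non.ma.diss.1}. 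For \eqref{lem.non.ma.diss.2} I differentiate $\sum_{|\al|\le N-1}\int_{\R^3}\pa^\al G\cdot\pa^\al\na_x(a_+-a_-)\,dx$ and use \eqref{m1-.non} and \eqref{m0-.non}: here the $-2E$ term in \eqref{m1-.non}, combined after an integration by parts with $\na_x\cdot E=a_+-a_-$ from \eqref{m.div.e}, produces $-2\sum\|\pa^\al(a_+-a_-)\|^2$, which is precisely the additional dissipation on the left of \eqref{lem.non.ma.diss.2}, while the remaining linear terms are moments of $\na_x\{\FI-\FP\}u$ and $\na_x\cdot G$, hence again $\le C\sum_{|\al|\le N}\|\pa^\al\{\FI-\FP\}u\|^2$.

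Estimate \eqref{lem.non.ma.diss.4} then follows directly: applying $\pa^\al$ with $|\al|\le N-1$ to \eqref{macro.non1} and \eqref{m0-.non} expresses $\pa_t(a_++a_-)$ and $\pa_t(a_+-a_-)$ through $\na_x\cdot b$ and $\na_x\cdot G$, $\pa_t b$ through $\na_x(a_++a_-)$, $\na_x c$ and $\na_x\cdot\highG(\{\FI-\FP\}u)$, and $\pa_t c$ through $\na_x\cdot b$ and $\na_x\cdot\highB(\{\FI-\FP\}u)$; inserting \eqref{lem.non.ma.diss.1} to bound $\na_x[b,c]$ gives the claim — and this insertion is the reason $\sum\|\pa^\al\na_x(a_++a_-)\|^2$ must be allowed on the right of \eqref{lem.non.ma.diss.4}. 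I expect the main difficulty to be bookkeeping rather than conceptual: the $\pa_t$-absorption in \eqref{lem.non.ma.diss.1} must be arranged so the reproduced macroscopic terms return with a coefficient strictly less than one; the derivative counting must be tracked, since recovering $\na_x[b,c]$ at order $|\al|\le N-1$ costs one further spatial derivative on $\{\FI-\FP\}u$ and so lands exactly at order $N$, and this recovered gradient must then feed consistently into \eqref{lem.non.ma.diss.3} and \eqref{lem.non.ma.diss.4}; and one must confirm that each electromagnetic quadratic product genuinely closes at size $C\CE_N(t)\CD_N(t)$ under only the smallness of $\CE_N(t)$, which is exactly where \eqref{sobolev.trick} enters.
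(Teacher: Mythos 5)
Your treatment of the time-derivative terms in \eqref{lem.non.ma.diss.1} contains a genuine gap. You propose to dispose of $\pa_t\highB_i(\{\FI-\FP\}u\cdot[1,1])$ (and of $\pa_t\highG_{ij}$) by inserting \eqref{app.vw.p04} for $\pa_t\{\FI-\FP\}u$ and absorbing the reproduced $\pa_i c$ (resp.\ $\pa_i b_j+\pa_j b_i$) on the grounds that it returns only ``up to a fixed fraction''. It does not: the moment equation \eqref{macro.non2}$_2$ \emph{is} the $\highB_i$-moment of \eqref{app.vw.p04} (note $\highB_i(\FP_\pm v)=0$ for every $v$, while $\highB_i(\xi\cdot\na_x\FP_\pm u)=\pa_i c$), so substituting \eqref{app.vw.p04} into $\pa_t\highB_i(\{\FI-\FP\}u)$ reproduces $\pa_i c$ with coefficient exactly one; the quantity you are trying to estimate cancels identically and you are left with a tautology, not a bound on $\|\pa^\al\na_x c\|$. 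The same circularity afflicts the recovery of the traceless symmetric part of $\na_x b$ from \eqref{macro.non.bij}. Since your arguments for \eqref{lem.non.ma.diss.3} and \eqref{lem.non.ma.diss.4} (like the paper's) feed on \eqref{lem.non.ma.diss.1}, the gap propagates to them.

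The fix, which is the paper's actual argument, is much simpler: here $\al=[\al_0,\al_1,\al_2,\al_3]$ includes the time derivative, and the right-hand side $\sum_{|\al|\le N}\|\pa^\al\{\FI-\FP\}u\|^2$ of the lemma runs over space--time derivatives. Hence for $|\al|\le N-1$ one bounds directly $\|\pa^\al\pa_t\highB_i(\{\FI-\FP\}u\cdot[1,1])\|^2\le C\|\pa^\al\pa_t\{\FI-\FP\}u\|^2\le C\sum_{|\al'|\le N}\|\pa^{\al'}\{\FI-\FP\}u\|^2$, and likewise for $\pa_t\highG_{ij}$; no substitution of the kinetic equation is needed. (If the norms contained only spatial derivatives, one would instead have to design additional interactive functionals with a $\frac{d}{dt}$ on the left-hand side, as you did for \eqref{lem.non.ma.diss.3} and \eqref{lem.non.ma.diss.2}; the static form of \eqref{lem.non.ma.diss.1} as stated relies on the presence of time derivatives in the norms.) Your remaining steps --- the identity $\int_{\R^3}|\pa^\al(\pa_ib_j+\pa_jb_i-\frac{2}{3}\de_{ij}\na_x\cdot b)|^2dx=2\|\pa^\al\na_xb\|^2+\frac{2}{3}\|\pa^\al\na_x\cdot b\|^2$ to recover the full gradient of $b$, the two interactive functionals for \eqref{lem.non.ma.diss.3} and \eqref{lem.non.ma.diss.2} (where the $-2E$ term combined with $\na_x\cdot E=a_+-a_-$ yields the extra $\|\pa^\al(a_+-a_-)\|^2$), the direct derivation of \eqref{lem.non.ma.diss.4} from \eqref{macro.non1} and \eqref{m0-.non}, and the control of all quadratic terms through \eqref{sobolev.trick} and the uniform smallness of $\CE_N(t)$ --- do coincide with the paper's proof once \eqref{lem.non.ma.diss.1} is repaired as above.
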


\begin{proof}
First consider \eqref{lem.non.ma.diss.1}. For $\al$ with $|\al|\leq N-1$, the bound of $c$ follows from
\begin{eqnarray*}
\|\pa^\al \pa_i c\|^2&\leq& C\|\pa^\al \pa_t \highB_i(\{\FI-\FP\}u\cdot [1,1])\|^2+C\|\pa^\al \highB_i((l_++l_-)+(\sourceG_++\sourceG_-))\|\\
&\leq& C\|\pa^\al\pa_t\{\FI-\FP\}u\|^2+C\|\pa^\al\na_x\{\FI-\FP\}u\|^2+C\|\pa^\al\{\FI-\FP\}u\|^2\\
&&
+C\|\pa^\al\highB_i(g_\pm)\|^2\\
&\leq &C\sum_{|\al|\leq N}\|\pa^\al\{\FI-\FP\}u\|^2+C\CE_N(t)\CD_N(t)
\end{eqnarray*}
due to the second equation of \eqref{macro.non2}, and in the same way, the bound of $b$ follows from equation \eqref{macro.non.bij} by noticing the identity
\begin{equation}  \notag
    \int_{\R^3}|\pa^\al (\pa_i b_j+\pa_j b_i-\frac{2}{3}\de_{ij}\na_x \cdot b)|^2dx=2\|\pa^\al \na_x b\|^2+\frac{2}{3}\|\pa^\al \na_x\cdot b\|^2.
\end{equation}
Next, \eqref{lem.non.ma.diss.3} follows from the first two equations of \eqref{macro.non1}. In fact, for $\al$ with $1\leq |\al|\leq N-1$, the second equation of \eqref{macro.non1} implies
\begin{multline*}
  \frac{1}{2}\|\pa^\al \pa_i(a_+ + a_-)\|^2 \\
  =-\frac{d}{dt}\int_{\R^3}\pa^\al \pa_i (a_+ + a_-)\pa^\al b_i dx
  +2\int_{\R^3}\pa^\al \pa_i\pa_t\frac{a_++a_-}{2}\pa^\al b_i dx\\
\quad\quad\quad+\int_{\R^3} \pa^\al \pa_i (a_+ + a_-) \pa^\al \Big\{-2\pa_ic-\frac{1}{2}\sum_{j=1}^3\pa_j\highG_{ij}(\{\FI-\FP\}u\cdot [1,1])   \\
  +E_i\frac{a_+-a_-}{2} + {[G\times B]_i} \Big\} dx,
\end{multline*}
and further using the first equation of \eqref{macro.non1} and Cauchy-Schwarz gives
\begin{eqnarray*}
&&\frac{d}{dt}\sum_{1\leq |\al|\leq N-1}\int_{\R^3}\pa^\al b\cdot \pa^\al \na_x (a_++a_-)dx
+\la \sum_{1\leq |\al|\leq N-1}\|\pa^\al \na_x (a_++a_-)\|^2
\\
&&\leq C\sum_{1\leq |\al|\leq N-1}\|\pa^\al \na_x[b,c]\|^2+ C \sum_{1\leq |\al|\leq N-1}\|\pa^\al \na_x\{\FI-\FP\}u\|^2+C\CE_{N}(t)\CD_N(t)\\
&&\leq C \sum_{|\al|\leq N}\|\pa^\al \{\FI-\FP\}u\|^2+C\CE_{N}(t)\CD_N(t),
\end{eqnarray*}
where \eqref{lem.non.ma.diss.1} was used in the last inequality. Hence, \eqref{lem.non.ma.diss.3}  holds. To prove \eqref{lem.non.ma.diss.2}, for $\al$ with $|\al|\leq N-1$, \eqref{m1-.non} together with \eqref{m0-.non} and \eqref{m.div.e} yield
\begin{eqnarray*}
&&\|\pa^\al \na_x(a_+ -a_-)\|^2 + 2\|\pa^\al (a_+ - a_-)\|^2\\
&&=\int_{\R^3} \pa^\al \na_x(a_+ -a_-)\cdot \pa^\al [\na_x(a_+ -a_-)-2E]dx\\
&&=-\frac{d}{dt}\int_{\R^3}\pa^\al G\cdot \pa^\al \na_x (a_+-a_-)dx+\|\pa^\al \na_x\cdot G\|^2\\
&&\quad+\int_{\R^3} \pa^\al \na_x(a_+ -a_-)\cdot\pa^\al \big[-\na_x\cdot \highG (\{\FI-\FP\}u\cdot q_1)+E(a_++a_-)\\
&&\quad\quad\quad\quad\quad\quad+2b\times B+\langle [\xi,-\xi]\FM^{1/2},\FL \{\FI-\FP\}u +\Ga(u,u)\rangle \big]dx,
\end{eqnarray*}
which implies \eqref{lem.non.ma.diss.2} after using Cauchy-Schwarz and taking summation over $|\al|\leq N-1$. Finally, it is straightforward to verify \eqref{lem.non.ma.diss.4} from \eqref{macro.non1} and \eqref{m0-.non} as well as \eqref{lem.non.ma.diss.1}. The proof of Lemma \ref{lem.non.ma.diss} is complete.
\end{proof}

Furthermore, \eqref{app.p2} and \eqref{app.p3} for the upper bounds of $E,B$ can be refined as

\begin{lemma}\label{lem.bdd.EB}
It holds that
\begin{multline}
\label{lem.bdd.EB.1}
\sum_{|\al|\leq N-1}\|\pa^\al E\|^2+\sum_{1\leq |\al|\leq N-1}\|\pa^\al B\|^2\\
\leq C\sum_{|\al|\leq N-1}\|\pa^\al \na_x\FP u\|^2
+C\sum_{|\al|\leq N}\|\pa^\al \{\FI-\FP\}u\|^2+C\CE_{N}(t)\CD_N(t),
\end{multline}
for any $t\geq 0$.
\end{lemma}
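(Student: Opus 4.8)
The plan is to bound $E$ pointwise in time through the representation obtained just before the statement by taking the difference of \eqref{m1.non} with the $\pm$ sign (equivalently, by solving \eqref{m1-.non} together with \eqref{m.div.e} for $E$), namely
\begin{multline*}
2E=\pa_t \langle[\xi,-\xi]\FM^{1/2}, \{\FI-\FP\}u\rangle + \na_x (a_+ - a_-)  + \na_x \cdot \highG (\{\FI-\FP\}u\cdot q_1)\\
-E (a_+ + a_-)-2b\times B -\langle [\xi,-\xi]\FM^{1/2},\FL \{\FI-\FP\}u +\Ga(u,u)\rangle,
\end{multline*}
and then to recover $B$ from the Maxwell system together with $\na_x\cdot B=0$ exactly as in the derivation of \eqref{app.p3}. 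First I would apply $\pa^\al$ with $|\al|\le N-1$ to the identity above, take the $L^2_x$-norm, and estimate term by term. The key point is that the multi-index $\al$ records time derivatives, so $\pa^\al\pa_t\langle[\xi,-\xi]\FM^{1/2}, \{\FI-\FP\}u\rangle=\langle[\xi,-\xi]\FM^{1/2},\pa^{\al'}\{\FI-\FP\}u\rangle$ for a multi-index $\al'$ with $|\al'|=|\al|+1\le N$; this term, as well as $\pa^\al\na_x\cdot\highG(\{\FI-\FP\}u\cdot q_1)$ and $\pa^\al\langle[\xi,-\xi]\FM^{1/2},\FL \{\FI-\FP\}u\rangle=\langle \FL([\xi,-\xi]\FM^{1/2}),\pa^\al\{\FI-\FP\}u\rangle$ (using that $\FL$ is self-adjoint and $[\xi,-\xi]\FM^{1/2}$ is a fixed rapidly decaying velocity function orthogonal to $\CN$), is controlled in $L^2_x$ by $\sum_{|\al|\le N}\|\pa^\al\{\FI-\FP\}u\|^2$. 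Meanwhile $\pa^\al\na_x(a_+-a_-)$ is part of $\pa^\al\na_x\FP u$ with $|\al|\le N-1$, which is the first term on the right of \eqref{lem.bdd.EB.1}.

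For the remaining quadratic terms $E(a_++a_-)$, $b\times B$ and $\langle[\xi,-\xi]\FM^{1/2},\Ga(u,u)\rangle$ I would use Cauchy--Schwarz, the Sobolev embedding trick \eqref{sobolev.trick} (only the version in the $x$-variables is needed here) and the uniform-in-time smallness of $\CE_N(t)$ guaranteed by \eqref{prop.exi.3}. After distributing $\pa^\al$ by the Leibniz rule, the pieces in which all derivatives fall on $E$ contribute $C\sqrt{\CE_N(t)}\,\|\pa^\al E\|^2$ once the remaining macroscopic factor is placed in $L^\infty$; since $\CE_N(t)$ is small these can be absorbed into the left side of \eqref{lem.bdd.EB.1}. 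Every other piece carries at least one spatial derivative on each factor, or contains $E$, $B$ or $\{\FI-\FP\}u$ as a ``good'' factor, so it is bounded by $C\CE_N(t)\CD_N(t)$: here one uses $\na_x\cdot E=a_+-a_-$ from \eqref{m.div.e}, so that an undifferentiated $a_+-a_-$ is controlled by $\|\na_x E\|^2\le C\CD_N(t)$, together with Gagliardo--Nirenberg interpolation to split a product into an $L^2$-factor bounded by $\CD_N(t)$ and an $L^\infty$- or $L^6$-factor bounded by $\CE_N(t)$; the term $\langle[\xi,-\xi]\FM^{1/2},\Ga(u,u)\rangle$ is handled by the $\highG$-moment estimate in \cite[Lemma 7]{S.VMB}. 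Summing over $|\al|\le N-1$ and absorbing yields
\[
\sum_{|\al|\le N-1}\|\pa^\al E\|^2\le C\sum_{|\al|\le N-1}\|\pa^\al\na_x\FP u\|^2+C\sum_{|\al|\le N}\|\pa^\al\{\FI-\FP\}u\|^2+C\CE_N(t)\CD_N(t).
\]

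Finally, for $B$ I would repeat the argument used to prove \eqref{app.p3}: when $\al$ contains a time derivative, $\pa^\al B=-\pa^{\al'}\na_x\times E$ with $|\al'|=|\al|-1$, giving $\|\pa^\al B\|^2\le C\sum_{1\le|\al|\le N-1}\|\pa^\al E\|^2$; when $\al$ is purely spatial with $|\al|\ge 1$, the identity $\De_x B=-\na_x\times(\na_x\times B)$ (valid since $\na_x\cdot B=0$), the Maxwell relation $\na_x\times B=\pa_t E+\langle[\xi,-\xi]\FM^{1/2},\{\FI-\FP\}u\rangle$, and the $L^2_x$-boundedness of the zeroth-order operator $\pa_i\De_x^{-1}\na_x$ reduce $\|\pa^\al B\|^2$ to $C\sum_{1\le|\al|\le N-1}\|\pa^\al E\|^2+C\sum_{|\al|\le N-2}\|\pa^\al\{\FI-\FP\}u\|^2$. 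Adding this to the $E$-estimate proves \eqref{lem.bdd.EB.1}. I expect the main obstacle to be the bookkeeping in the quadratic terms: $\CD_N(t)$ controls $\{\FI-\FP\}u$ and all spatial derivatives of $[E,B]$ (plus $\|E\|$ itself) but neither $\|\FP u\|$ nor $\|B\|$ at zeroth order, so one must route every undifferentiated macroscopic or magnetic factor either into an $L^\infty$/$L^6$ norm bounded by $\CE_N(t)$ or, via \eqref{m.div.e}, into a genuinely dissipative quantity --- this is precisely where the identity $\na_x\cdot E=a_+-a_-$ and the Sobolev inequality \eqref{sobolev.trick} are essential.
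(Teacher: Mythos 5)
Your proposal is correct and follows essentially the same route as the paper: the paper's proof simply invokes the representation $2E=\pa_t G+\na_x(a_+-a_-)+\na_x\cdot\highG(\{\FI-\FP\}u\cdot q_1)-E(a_++a_-)-2b\times B-\langle[\xi,-\xi]\FM^{1/2},\FL\{\FI-\FP\}u+\Ga(u,u)\rangle$ from \eqref{m1-.non}, states that the bound for $E$ follows directly, and then obtains the bound for $B$ from \eqref{app.p3} together with the estimate on $E$. Your term-by-term estimates (counting the time derivative in $\pa^\al\pa_t G$ toward the order-$N$ microscopic norm, absorbing $\na_x(a_+-a_-)$ into $\pa^\al\na_x\FP u$, and routing the quadratic terms into $C\CE_N(t)\CD_N(t)$ via \eqref{sobolev.trick}) are exactly the details the paper leaves implicit.
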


\begin{proof}
One again, we use \eqref{m1-.non}, that is,
\begin{multline}
2E=\pa_t G + \na_x (a_+-a_-)+ \na_x\cdot \highG (\{\FI-\FP\}u\cdot q_1)\\
-E(a_++a_-)-2b\times B-\langle [\xi,-\xi]\FM^{1/2},\FL  \{\FI-\FP\}u +\Ga(u,u)\rangle.
\end{multline}
Then, the upper bound for $E$ in \eqref{lem.bdd.EB.1} follows directly from the above equation, and thus the upper bound of $B$ also holds by \eqref{app.p3} and  the estimate on $E$.
\end{proof}

\medskip

\noindent{\it Step 3.} Let us define the interactive high-order instant energy functional $ \CE_{N,{\rm int}}^{{\rm h}}(t)$ as in \cite{DY-09VPB} by
\begin{multline}\label{def.int.tf}
  \CE_{N,{\rm int}}^{{\rm h}}(t) = \sum_{1\leq |\al|\leq N-1}\int_{\R^3}\pa^\al b\cdot \pa^\al \na_x (a_++a_-)dx\\
  +\sum_{|\al|\leq N-1}\int_{\R^3}\pa^\al G\cdot \pa^\al \na_x (a_+-a_-)dx.
\end{multline}
Notice that
\begin{equation}\label{he.int.bdd}
    | \CE_{N,{\rm int}}^{{\rm h}}(t)|\leq C\sum_{1\leq |\al|\leq N}\|\pa^\al \FP u\|^2+C\sum_{|\al|\leq N-1}\| {\pa^\al}\{\FI-\FP\}u\|^2.
\end{equation}
In addition,  {the proper linear combination of} \eqref{lem.non.ma.diss.1}, \eqref{lem.non.ma.diss.3},  \eqref{lem.non.ma.diss.2} and \eqref{lem.non.ma.diss.4} in Lemma \ref{lem.non.ma.diss} implies
\begin{multline}
 \nonumber
\frac{d}{dt}\CE_{N,{\rm int}}^{{\rm h}}(t) {+\la \sum_{1\leq |\al|\leq N}\|\pa^\al [a_+\pm a_-,b,c]\|^2+\la \|a_+-a_-\|^2}\\
\leq C\|\na_x (a_+ + a_-)\|^2+ C\sum_{|\al|\leq N}\|\pa^\al \{\FI-\FP\}u\|^2+C\CE_{N}(t)\CD_N(t).
\end{multline}
Note that to derive the above inequality  {we can add}
$\|\nabla_x(a_++a_-)\|^2$ to both sides.
After further plugging in \eqref{lem.bdd.EB.1}, one has
\begin{multline}
 \nonumber
\frac{d}{dt}\CE_{N,{\rm int}}^{{\rm h}}(t) {+\la \sum_{1\leq |\al|\leq N}\|\pa^\al [a_+\pm a_-,b,c]\|^2+\la \|a_+-a_-\|^2}\\
+\la \sum_{|\al|\leq N-1}\|\pa^\al E\|^2+\la \sum_{1\leq |\al|\leq N-1}\|\pa^\al B\|^2\\
\leq C\|\na_x\FP u\|^2+ C\sum_{|\al|\leq N}\|\pa^\al \{\FI-\FP\}u\|^2+C\CE_{N}(t)\CD_N(t).
\end{multline}
Hence, it further holds that
\begin{multline}\label{macro.total.diss}
\frac{d}{dt}\CE_{N,{\rm int}}^{{\rm h}}(t) {+\la \sum_{1\leq |\al|\leq N}\|\pa^\al \FP u\|^2}
+\la \sum_{1\leq |\al|\leq N-1}\|\pa^\al [E,B]\|^2+\la \|E\|^2\\
\leq C\|\na_x\FP u\|^2+ C\sum_{|\al|\leq N}\|\pa^\al \{\FI-\FP\}u\|^2+C\CE_{N}(t)\CD_N(t).
\end{multline}
This is the main energy estimate for $\CE_{N,{\rm int}}^{{\rm h}}(t)$.

\medskip

\noindent{\it Step 4.} Notice that \eqref{app.ho.p02} implies
\begin{multline}\label{app.ho.p02.re}
     \frac{1}{2}\frac{d}{dt}(\|\{\FI-\FP\}u\|^2 {+\|E\|^2})+\la \|\nu^{\frac{1}{2}}\{\FI-\FP\}u\|^2\\
     \leq  {\eta \|E\|^2 +\frac{1}{4\eta}\|\na_x\times B\|^2+
     C \|
     \nu^{1/2}
     \na_x\FP u\|^2}\\
    + C\left(\CE_{N}(t)+\sqrt{\CE_{N}(t)}\right)\CD_N(t)
\end{multline}
for a constant $\eta>0$ to be chosen small enough.
Now, we are ready to construct $\CE^{\rm h}_{N}(t)$. In fact, let us define
\begin{eqnarray*}
 \CE^{\rm h}_{N}(t) &=& \sum_{1\leq |\al|\leq N}(\|\pa^\al u\|^2+\|\pa^\al [E,B]\|^2)+\kappa_1(\|\{\FI-\FP\}u\|^2 {+\|E\|^2})\\
 &&+\kappa_2\CE_{N,{\rm int}}^{{\rm h}}(t)
 +\kappa_3\sum_{\substack{|\al|+|\be|\leq N\\ |\be|\geq 1}}C_{\al,\be}\|\pa^\al_\be \{\FI-\FP\} u\|^2,
\end{eqnarray*}
for suitable constants $0<\kappa_3\ll \kappa_2\ll \kappa_1\ll 1$ to
be determined now, where $\CE_{N,{\rm int}}^{{\rm h}}(t)$ is given
by \eqref{def.int.tf}. Due to \eqref{he.int.bdd}, one can let
$0<\kappa_3\ll \kappa_2\ll \kappa_1\ll 1$ be small enough such that
\eqref{def.eNm.h} holds with $m=0$ and hence $\CE^{\rm h}_{N}(t)$ is
indeed a well-defined high-order instant energy functional. In
addition, by choosing $0<\kappa_3\ll \kappa_2\ll \kappa_1\ll 1$
further small enough, the sum of \eqref{app.ho.p01},
\eqref{app.ho.p02.re}$\times \kappa_1$ for $\eta>0$ small enough,
\eqref{macro.total.diss}$\times \kappa_2$ and
\eqref{app.ho.p03}$\times \kappa_3$ yields
\begin{multline}  \notag
 \frac{d}{dt}\CE_N^{\rm h}(t)+\la \CD_N(t)\leq
 C {(\|\nu ^{1/2}\na_x\FP u\|^2+\|\na_x\times B\|^2)}\\
 +C\left(\sqrt{\CE_{N}(t)}+ \CE_{N}(t) \right)
 \CD_N(t),
\end{multline}
which implies the desired estimate \eqref{thm.energy.2} since $\CE_{N}(t)$ is small enough uniformly in all $t\geq 0$. This completes the proof of the second part of Theorem \ref{thm.energy}. \qed

\section{Time decay for the nonlinear system}\label{sec.decayNL}

This section is devoted to the proof of Theorem \ref{thm.ns} and
Corollary \ref{cor}. As mentioned at the beginning of Section
\ref{sec.energy}, the time-decay rates of solutions in some energy
spaces are obtained by combining the corresponding nonlinear energy
estimates and the linearized time-decay property applied to the
lower-order terms. In addition, we also will use the time-weighted
estimates, as in \eqref{def.einfty}, and a bootstrap argument on the derivatives to handle the
regularity-loss phenomenon in the energy dissipation rate due to the
degenerately dissipative (regularity loss) property from the Maxwell system. Decay
rates of solutions in the $L^\infty$-norm are deduced by using some
optimal Sobolev embedding theorem, and hence time rates for
$L^r$-norm with $2\leq r\leq \infty$ will follow from the normal
$L^2$-$L^\infty$ interpolation inequality.

\subsection{Decay rates of the full instant energy functional}\label{sec.decayNL.1}

In what follows, we fix an integer $m\geq 0$ and suppose that $\eps_{N+2,m\vee 1}$ defined in \eqref{def.id.jm} from initial data $[u_0,E_0,B_0]$ is sufficiently small. The goal is to prove \eqref{thm.ns.2}. Since $\CE_{N,m}(0)\leq \eps_{N+2,m\vee 1}$ is sufficiently small, the Lyapunov inequality \eqref{thm.energy.1} for  $\CE_{N,m}(t)$ and  $\CD_{N,m}(t)$ holds for any $t\geq 0$. Let $1<\ell<2$. Multiplying \eqref{thm.energy.1} by $(1+t)^\ell$ and then taking time integration over $[0,t]$ gives
\begin{equation}  \notag
    (1+t)^\ell\CE_{N,m}(t)+\la \int_0^t (1+s)^\ell \CD_{N,m}(s)ds\leq \CE_{N,m}(0)+\ell\int_0^t {(1+s)}^{\ell-1}\CE_{N,m}(s)ds.
\end{equation}
Recall \eqref{def.eNm} and \eqref{def.dNm} for definitions of $\CE_{N,m}(t)$ and  $\CD_{N,m}(t)$. It follows from \eqref{def.eNm} and \eqref{def.dNm} that
\begin{equation}  \notag
    \CE_{N,m}(t)\leq C(\|\FP u {(t)}\|^2+\|B {(t)}\|^2+\CD_{N+1,m}(t)).
\end{equation}
Note that we could take $\CD_{N+1,m-1}(t)$ above.
Then, it follows that
\begin{multline*}
    (1+t)^\ell\CE_{N,m}(t)+ \la \int_0^t (1+s)^\ell \CD_{N,m}(s)ds\\
    \leq \,\CE_{N,m}(0)+C\ell\int_0^t(1+s)^{\ell-1}\left(\|\FP u(s)\|^2+\|B(s)\|^2\right)ds\\
    +C\ell\int_0^t(1+s)^{\ell-1}\CD_{N+1,m}(s)ds.
\end{multline*}
We use similar estimates from \eqref{thm.energy.1} for $N+1$ and $N+2$ as above to obtain
\begin{multline*}
    (1+t)^{\ell-1}\CE_{N+1,m}(t)+ \la \int_0^t (1+s)^{\ell-1} \CD_{N+1,m}(s)ds\\
    \leq \,\CE_{N+1,m}(0)+C(\ell-1)\int_0^t(1+s)^{\ell-2}(\|\FP u(s)\|^2+\|B(s)\|^2)ds\nonumber\\
    +C(\ell-1)\int_0^t(1+s)^{\ell-2}\CD_{N+2,m}(s)ds,
\end{multline*}
and
\begin{equation}  \notag
    \CE_{N+2,m}(t)+\la \int_0^t\CD_{N+2,m}(s)ds\leq \CE_{N+2,m}(0).
\end{equation}
It follows by iteration that
\begin{multline}
    (1+t)^\ell \CE_{N,m}(t)+ \la \int_0^t (1+s)^\ell \CD_{N,m}(s)ds\\
    \leq C\CE_{N+2,m}(0)+C\ell\int_0^t(1+s)^{\ell-1}(\|\FP u(s)\|^2+\|B(s)\|^2)ds,
    \label{thm.ns.p1}
\end{multline}
due to $1<\ell<2$.

On the other hand, define
\begin{equation}\label{def.einfty}
    \CE_{N,m}^\infty(t) \eqdef \sup_{0\leq s\leq t}(1+s)^{\frac{3}{2}}\CE_{N,m}(s).
\end{equation}
From now on let us consider the
pointwise time-decay estimate on $\|\FP u\|^2+\|B\|^2$ in terms of $ \CE_{N,m}^\infty(t)$. Formally, the solution $U=[u,E,B]$ to the Cauchy problem \eqref{VMB.pB}, \eqref{VMB.pM1}, \eqref{VMB.pM2},  \eqref{VMB.pM3} of the Vlasov-Maxwell-Boltzmann system can be written as
\begin{equation}\label{decom.U}
   U(t)=\semiG(t)U_0+\int_0^t\semiG(t-s)[\sourceG(s),0,0]ds,
\end{equation}
where
$\semiG$ is defined in \eqref{def.lu1} and
$\sourceG=[\sourceG_+, \sourceG_-]$ is given by \eqref{def.g.non}; equivalently
\begin{equation}  \notag
    \sourceG \eqdef \Ga(u,u)+\frac{q}{2}E\cdot \xi u-q (E+\xi\times B)\cdot \na_\xi u.
\end{equation}
For later use, $U(t)$ can be rewritten as
\begin{equation}  \notag
   U(t)=I_0(t)+I_1(t)+I_2(t),
\end{equation}
with
\begin{eqnarray*}
  I_0(t) &=&  \semiG(t)U_0,\\
  I_1(t) &=&\int_0^t\semiG(t-s)[\sourceG_1(s),0,0]ds,\\
  I_2(t) &=&\int_0^t\semiG(t-s)[\sourceG_2(s),0,0]ds,
\end{eqnarray*}
where
\begin{eqnarray*}
  \sourceG_1 &=& \Ga(u,u)+\{\FI-\FP\}\left(\frac{q}{2}E\cdot \xi u-q (E+\xi\times B)\cdot \na_\xi u\right),\\
    \sourceG_2 &=& \FP \left( \frac{q}{2}E\cdot \xi u-q (E+\xi\times B)\cdot \na_\xi u\right).
\end{eqnarray*}
Here, we remark that time-integral terms $I_1(t)$, $I_2(t)$ and the time-integral term in $U(t)$ are well-defined since
\begin{equation}
    \int_{\R^3}\FM^{1/2}[\sourceG_{i,+}(s)-\sourceG_{i,-}(s)]d\xi=0,\ \ i=1,2,
\end{equation}
for all $x\in \R^3$. For later use,  we will need

\begin{lemma}\label{lem.h1h2}
Assume $N\geq 4$. For any integer $j\geq 0$, there is $C>0$ such
that
\begin{eqnarray}
&\dis \|\nu^{-1/2}\sourceG_1(t)\|_{L^2_\xi(H^j_x)\cap Z_1}^2\leq
C\CE_{{j\vee N},1}(t)\CE_{j\vee N}(t),\label{lem.h1h2.1}
\\
&\dis \|\sourceG_2(t)\|_{L^2_\xi(H^j_x)\cap Z_1}\leq C\CE_{j\vee
N}(t),\label{lem.h1h2.2}
\end{eqnarray}
for any $t\geq 0$, where $j\vee N\eqdef\max\{j,N\}$.
\end{lemma}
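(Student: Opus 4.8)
The plan is to estimate the two source terms separately, exploiting the fact that $\sourceG_1$ is (at least) quadratically nonlinear with a genuine extra velocity decay (it is $\{\FI-\FP\}$-projected or a $\Ga$-term, hence controlled by $\nu^{-1/2}$), while $\sourceG_2$ is purely macroscopic and so its velocity-integrals reduce to polynomial moments against $\FM^{1/2}$. The key structural point is that every term appearing in $\sourceG_1$ and $\sourceG_2$ is a product of two factors, one of which is a zeroth-order quantity ($E$, $B$, $\FP u$, or $\{\FI-\FP\}u$) and the other carries at worst one $x$-derivative or one $\xi$-derivative; so after applying $\pa^\al$ with $|\al|\le j$ and distributing by Leibniz, one factor always retains $\le j$ derivatives and hence lives in the energy functional $\CE_{j\vee N,*}$, while the other factor has $\le \lfloor j/2\rfloor + 1 \le N-1$ derivatives (using $N\ge 4$) and can be placed in $L^\infty_x$ via the Sobolev embedding \eqref{sobolev.trick}, or in $L^2_x$ when it is the lower-order one. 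This is exactly the ``Sobolev embedding trick'' already used repeatedly in Section \ref{sec.energy}.

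First I would handle the $L^2_\xi(H^j_x)$ bound for $\sourceG_1$. For the $\Ga(u,u)$ piece, I invoke the standard trilinear estimate (as in \cite[Lemma 7]{S.VMB} together with \cite[Lemma 3.3]{DUYZ} for the momentum weight), which gives $\|\nu^{-1/2}\pa^\al\Ga(u,u)\|\le C\sqrt{\CE_{j\vee N}(t)}\,\|\nu^{1/2}\pa^{\al'}u\|$ summed appropriately, bounded by $C\sqrt{\CE_{j\vee N}(t)}\sqrt{\CE_{j\vee N,1}(t)\,\CE_{j\vee N}(t)}$; squaring gives the claimed form $C\CE_{j\vee N,1}(t)\CE_{j\vee N}(t)$. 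For the field terms $\{\FI-\FP\}(\tfrac q2 E\cdot\xi u)$ and $\{\FI-\FP\}(q(E+\xi\times B)\cdot\na_\xi u)$, the $\xi$-growth and the $\na_\xi$ are harmless because $\{\FI-\FP\}$ composed with multiplication by a polynomial in $\xi$ times $u$ still maps into a space with the full $\nu^{-1/2}$ weight available; I then put $\pa^{\al_1}[E,B]$ in $L^\infty_x$ (costing $\le N-1$ derivatives, hence $\le\CE_N$) and $\pa^{\al_2}u$ in $L^2$, or vice versa, always arranging that the higher-derivative factor is measured by $\CE_{j\vee N}$ and the other by $\CE_N\le\CE_{j\vee N}$; one of the two factors being a field automatically supplies a factor of $\sqrt{\CE_{\cdot,1}}$ through $\|[E,B]\|\lesssim\sqrt{\CE}$, which is why the weighted energy $\CE_{j\vee N,1}$ enters. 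The $Z_1 = L^2_\xi(L^1_x)$ bound follows the same scheme but uses the Cauchy–Schwarz factorization $\|fg\|_{L^1_x}\le\|f\|_{L^2_x}\|g\|_{L^2_x}$ in place of $L^\infty_x\cdot L^2_x$, so both factors are taken in $L^2_x$ and the result is again of the stated quadratic-times-quadratic form.

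For $\sourceG_2$, the argument is easier: $\sourceG_2 = \FP(\cdots)$ so after testing against the basis of $\CN$ each component is $\langle\psi(\xi)\FM^{1/2},\,\tfrac q2 E\cdot\xi u - q(E+\xi\times B)\cdot\na_\xi u\rangle$ with $\psi$ a fixed low-degree polynomial; integrating the $\na_\xi$ by parts converts it into multiplication by another polynomial times $\FM^{1/2}$, so the velocity integral is absolutely convergent and produces a pointwise-in-$(t,x)$ bilinear expression in $([E,B], \langle\text{poly}\cdot\FM^{1/2},u\rangle)$, i.e. a product of a field component and a macroscopic moment of $u$. Applying $\pa^\al$, distributing, and using the same $L^\infty_x\cdot L^2_x$ (for $H^j_x$) or $L^2_x\cdot L^2_x$ (for $Z_1$) splitting with the higher-order factor in $\CE_{j\vee N}$ and the lower in $\CE_N$, both factors are bounded by $\sqrt{\CE_{j\vee N}(t)}$, giving $\|\sourceG_2(t)\|_{L^2_\xi(H^j_x)\cap Z_1}\le C\,\CE_{j\vee N}(t)$ (no square root because the right-hand side is itself already the energy, not its square root — note the asymmetry in exponents between \eqref{lem.h1h2.1} and \eqref{lem.h1h2.2}). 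The main obstacle is purely bookkeeping: one must check in each Leibniz term that the derivative count on the ``$L^\infty_x$ factor'' never exceeds $N-1$ when $j\le N$, and never exceeds $j-1$ in a way that still fits $\CE_{j\vee N}$ when $j>N$ — this is where $N\ge 4$ (so that $\lfloor N/2\rfloor+1\le N-1$) is used — and that the velocity weights always close, i.e. one never needs more than $\nu^{-1/2}$ on the source, which holds precisely because of the $\{\FI-\FP\}$ projections and the bilinear structure of $\Ga$.
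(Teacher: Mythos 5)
Your overall route is the same as the paper's: the paper likewise splits $\sourceG_1$ into $\Ga(u,u)$ plus the field terms (writing $\{\FI-\FP\}$ as $\FI$ minus $\FP$), quotes \cite[Lemma 3.3]{DUYZ} for the weighted estimate on $\Ga$, disposes of the remaining terms by the $L^\infty_x\times L^2_x$ (Sobolev embedding) and $L^2_x\times L^2_x$ (Cauchy--Schwarz, for $Z_1$) splittings, and observes that \eqref{lem.h1h2.2} is immediate because $\FP$ is a macroscopic projector; your reduction of $\sourceG_2$ to moments against fixed polynomials times $\FM^{1/2}$ is exactly that observation, and it correctly explains why no velocity weight is needed there.

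There is, however, a genuine flaw at the one point of \eqref{lem.h1h2.1} that is not routine, namely the velocity-weight bookkeeping for the field terms. You assert that the $\xi$-growth in $E\cdot\xi u$ and $(\xi\times B)\cdot\na_\xi u$ is ``harmless because of the $\{\FI-\FP\}$ projections,'' and that the weighted energy $\CE_{j\vee N,1}$ enters ``through $\|[E,B]\|\lesssim\sqrt{\CE}$.'' Both claims are incorrect: $\{\FI-\FP\}$ merely subtracts a macroscopic part and provides no decay in $\xi$, and the field norms $\|\pa^\al[E,B]\|$ already sit inside the unweighted energy $\CE_{N,0}$, so they cannot be the source of the index $m=1$. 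The actual mechanism --- and the entire reason the right-hand side of \eqref{lem.h1h2.1} is $\CE_{j\vee N,1}(t)\CE_{j\vee N}(t)$ rather than $\CE_{j\vee N}(t)^2$ --- is that for hard spheres $\nu(\xi)\sim 1+|\xi|$, so $\nu^{-1/2}|\xi|\leq C\nu^{1/2}$; hence after the Leibniz/Sobolev splitting the $u$-factor must be measured in the $\nu^{1/2}$-weighted norms $\|\nu^{1/2}\pa^\al_\be u\|$, which only the $m\geq 1$ energies control. Your argument never invokes this, so as written the bound on $\|\nu^{-1/2}\{\FI-\FP\}(\tfrac{q}{2}E\cdot\xi u-q(E+\xi\times B)\cdot\na_\xi u)\|_{L^2_\xi(H^j_x)\cap Z_1}$ is not justified, even though the inequality you state at the end is the correct one. (A smaller slip: your intermediate estimate for $\Ga$, $C\sqrt{\CE_{j\vee N}}\sqrt{\CE_{j\vee N,1}\CE_{j\vee N}}$, is cubic; the correct product is $\sqrt{\CE_{j\vee N}}\sqrt{\CE_{j\vee N,1}}$, whose square gives the stated bound.)
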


\begin{proof}
Due to the definition of the quadratically nonlinear function $g_2$
and the fact that $\FP$ is a macroscopic projector,
\eqref{lem.h1h2.2} immediately follows by the definition
\eqref{def.eNm} for $\CE_N(t)=\CE_{N,0}(t)$ with $N\geq 4$. To prove
\eqref{lem.h1h2.1},
\begin{multline*}
 \|\nu^{-1/2}\sourceG_1(t)\|_{L^2_\xi(H^j_x)\cap Z_1}^2\leq
2\|\nu^{-1/2}\Ga(u,u)\|_{L^2_\xi(H^j_x)\cap
Z_1}^2\\
 +2\|\nu^{-1/2}\left(\frac{q}{2}E\cdot \xi u-q
(E+\xi\times B)\cdot \na_\xi u\right)\|_{L^2_\xi(H^j_x)\cap Z_1}^2\\
 +2\|\nu^{-1/2}\FP\left(\frac{q}{2}E\cdot \xi u-q
(E+\xi\times B)\cdot \na_\xi u\right)\|_{L^2_\xi(H^j_x)\cap Z_1}^2,
\end{multline*}
where it is also straightforward to see that the last two terms on
the r.h.s.~are bounded by $C\CE_{{j\vee N},1}(t)\CE_{j\vee
N}(t)$, while the estimate on the first term follows from
\cite[Lemma 3.3]{DUYZ}. Hence, \eqref{lem.h1h2.1} and
\eqref{lem.h1h2.2} are proved.
\end{proof}

We recall the norms from \eqref{brief.norm}
for $U=[u,E,B]$.
We now apply \eqref{thm.ls.1} with $m=0$, $r=1$ and $\ell=3/2$ to $I_0(t)$ and $I_2(t)$, respectively, to obtain
\begin{equation}  \notag
    \|I_0(t)\|_{\CL^2}\leq C(1+t)^{-\frac{3}{4}} \|U_0\|_{\CH^{\frac{3}{2}}\cap \CZ_1},
\end{equation}
and
\begin{multline*}
    \|I_2(t)\|_{\CL^2}\leq \int_0^t\|\semiG(t-s)[\sourceG_2(s),0,0]\|_{\CL^2}ds
    \\
    \leq
    C\int_0^t(1+t-s)^{-\frac{3}{4}}\|\sourceG_2(s)\|_{L^2_\xi(H^{\frac{3}{2}}_x)\cap Z_1}ds.
\end{multline*}
For $I_2(t)$, it further holds from Lemma \ref{lem.h1h2} and the definition \eqref{def.einfty} of $\CE_{N,m}^\infty(t)$ that
\begin{multline*}
    \|I_2(t)\|_{\CL^2}\leq C\int_0^t(1+t-s)^{-\frac{3}{4}}\CE_{N}(s)ds\leq C\int_0^t(1+t-s)^{-\frac{3}{4}}\CE_{N,m}(s)ds\\
    \leq C \CE_{N,m}^\infty(t)\int_0^t(1+t-s)^{-\frac{3}{4}}(1+s)^{-\frac{3}{2}}ds\\
    \leq C \CE_{N,m}^\infty(t)(1+t)^{-\frac{3}{4}}.
\end{multline*}
Similarly, applying \eqref{thm.ls.2} with $m=0$, $r=1$ and $\ell=3/2$ to $I_1(t)$ due to the fact that $\FP \sourceG_1=0$ and then using Lemma  \ref{lem.h1h2}, one has
\begin{multline*}
  \|I_1(t)\|_{\CL^2}^2 \leq  C\int_0^t(1+t-s)^{-\frac{3}{2}} \|\nu^{-1/2}\sourceG_1(s)\|_{L^2_\xi(H^{\frac{3}{2}}_x)\cap Z_1}^2ds\\
  \leq  C\int_0^t(1+t-s)^{-\frac{3}{2}}\CE_{N,1}(s)\CE_N(s)ds\\
   \leq  C\int_0^t(1+t-s)^{-\frac{3}{2}}\CE_{N,m\vee 1}(s)\CE_{N,m}(s)ds.
\end{multline*}
Further using the definition \eqref{def.einfty} of
$\CE_{N,m}^\infty(t)$ again and
 \eqref{thm.energy.1}, it follows that
\begin{multline*}
  \|I_1(t)\|_{\CL^2}^2 \leq  C\CE_{N,m}^\infty(t)
  \sup_{\tau\geq 0}\CE_{N,m\vee 1}(\tau)
  \int_0^t(1+t-s)^{-\frac{3}{2}}(1+s)^{-\frac{3}{2}}ds\\
  \leq C (1+t)^{-\frac{3}{2}} \CE_{N,m}^\infty(t)\CE_{N,m\vee 1}(0).
\end{multline*}
Collecting the estimates on $I_i(t)$ $(i=0,1,2)$ above implies
\begin{multline}
\|\FP u(t)\|^2+\|B(t)\|^2\leq C\|U(t)\|_{\CL^2}^2\leq C\sum_{i=0}^2\|I_i(t)\|_{\CL^2}^2\\
\leq  C(1+t)^{-\frac{3}{2}} \|U_0\|_{\CH^2\cap \CZ_1}^2+C \CE_{N,m\vee 1}(0)\CE_{ {N,m}}^\infty(t)(1+t)^{-\frac{3}{2}}\\
+C (1+t)^{-\frac{3}{2}}[\CE_{N,m}^\infty(t)]^2.\label{thm.ns.p2}
\end{multline}
Now, fix a constant $\eps>0$ close to zero. Taking $\ell=3/2+\eps$ in \eqref{thm.ns.p1} yields
\begin{multline*}
    (1+t)^{\frac{3}{2}+\eps}\CE_{N,m}(t)+ \la \int_0^t (1+s)^{\frac{3}{2}+\eps}\CD_{N,m}(s)ds\\
    \leq \,C\CE_{N+2,m}(0)+C\int_0^t(1+s)^{\frac{3}{2}+\eps-1}(\|\FP u(s)\|^2+\|B(s)\|^2)ds.
\end{multline*}
Plugging \eqref{thm.ns.p2} into the above inequality gives
\begin{multline*}
 \dis  (1+t)^{\frac{3}{2}+\eps} \CE_{N,m}(t)\leq  C\CE_{N+2,m}(0)\\
  \dis +C_\eps(1+t)^{\eps}\left(\|U_0\|_{\CH^2\cap \CZ_1}^2+\CE_{N,m\vee1}(0)\CE_{N,m}^\infty(t)+[\CE_{N,m}^\infty(t)]^2\right),
\end{multline*}
which after dividing by $(1+t)^\eps$ implies that for any $t\geq 0$,
\begin{equation}  \notag
\dis  (1+t)^{\frac{3}{2}} \CE_{N,m}(t)\leq  C
\left(\CE_{N+2,m}(0)+\|U_0\|_{\CH^2\cap \CZ_1}^2+\CE_{N,m\vee1}(0)\CE_{N,m}^\infty(t)+[\CE_{N,m}^\infty(t)]^2\right).
\end{equation}
We equivalently have that
\begin{equation}  \notag
\dis  \CE_{N,m}^\infty(t)\leq  C
\left(\CE_{N+2,m}(0)+\|U_0\|_{\CH^2\cap \CZ_1}^2+\CE_{N,m\vee1}(0)\CE_{N,m}^\infty(t)+[\CE_{N,m}^\infty(t)]^2\right).
\end{equation}
Since $\CE_{N+2,m}(0)+\|U_0\|_{\CH^2\cap \CZ_1}^2\leq \eps_{N+2,m\vee 1}$  and $\CE_{N,m\vee1}(0)\leq \eps_{N+2,m\vee 1}$ are sufficiently small, one has
\begin{equation}
\notag
\CE_{N,m}^\infty(t)\leq  C\eps_{N+2,m},
\end{equation}
which gives the desired time-decay estimate \eqref{thm.ns.2}. \qed

\subsection{Decay rates of the high-order instant energy functional}\label{sec.decayNL.2} In this subsection, we shall prove the time-decay estimate \eqref{thm.ns.3} for the high-order instant energy functional $\CE_N^{\rm h}(t)$. It follows from Theorem \ref{thm.energy} that
\begin{equation}\label{dr.he.p01}
    \frac{d}{dt} \CE_N^{\rm h}(t)+\la \CD_{N}(t)\leq
    C(\|
          {
     \nu^{1/2}
     }
    \na_x\FP u(t)\|^2 {+\|\na_x\times B(t)\|^2}),
\end{equation}
which by definitions \eqref{def.eNm.h}, \eqref{def.dNm} of $\CE_N^{\rm h}(t)$ and $ \CD_{N}(t)$,  implies
\begin{multline}\label{dr.he.p02}
    \frac{d}{dt} \CE_N^{\rm h}(t)+\la \CE_N^{\rm h}(t)\leq
    C(\|
          {
     \nu^{1/2}
     }
    \na_x\FP u(t)\|^2 {+\|\na_x\times B(t)\|^2})\\
    +C\sum_{|\al|=N}\|\pa^\al [E(t),B(t)]\|^2.
\end{multline}
Notice that from the system \eqref{VMB.pB}-\eqref{VMB.pM2}, one can replace all time derivatives of $E$ and $B$ by {the spatial derivatives} of $E,B$ and space-time derivatives of $\{\FI-\FP\} u$.

In particular we {\it claim} that
\begin{multline}\label{time.d.claim}
    \sum_{|\al|=N}\|\pa^\al [E(t),B(t)]\|^2\leq C \sum_{\substack{  |\al'|=N-1  }} \| \{\FI-\FP\} \pa^{\al'} u(t)\|^2\\
    +C\sum_{\substack{|\al| = N\\ \al_0=0}}\|\pa^\al [E(t),B(t)]\|^2.
\end{multline}

\medskip
\noindent{\it Proof of claim:} This estimate \eqref{time.d.claim}
follows from an induction on the number of time derivatives using
\eqref{maxwell.u}. First suppose $|\al|=N$ and $\alpha_0=1$.  Then
$\alpha = (\alpha_0, 0, 0, 0) + \alpha'$ with $\alpha' = (0,
\alpha_1, \alpha_2, \alpha_3)$ and $|\al'|=N-1$.   Now using
\eqref{max.B.est} we obtain
\begin{equation}  \notag
    \|\pa^\al B\|^2\leq C\sum_{\substack{ |\al| =1 \\ \al_0=0}}\|\pa^{\al'} \pa^\al E\|^2
    \le
    C\sum_{\substack{1\leq |\al|\leq N\\ \al_0=0}}\|\pa^\al E\|^2.
\end{equation}
Similarly, using \eqref{maxwell.u} and also  {Cauchy-Schwarz}, in this
initial case we have
\begin{equation}  \notag
    \|\pa^\al E\|^2\leq  \|\pa^{\al'} \{\FI-\FP\}u\|^2+
    C\sum_{\substack{ |\al| =1 \\ \al_0=0}}\|\pa^{\al'} \pa^\al B\|^2.
\end{equation}
This completes the first inductive step.

Now we suppose that the lemma is true for $|\al|=N$ and $\alpha_0=m$
with any $m=1,\ldots, N-1$. We will show that the lemma is also true
for $\alpha_0=m+1$. Similar to the first inductive step, as in
\eqref{max.B.est} we have
\begin{equation}\notag
    \pa^\al B=\pa^{\al'}\pa_tB=-\pa^{\al'}\na_x\times E,
\end{equation}
for some $\al'$ with $|\al'|=|\al|-1$ since $\alpha_0=m+1>0$. Since
$\al'_0=m$, we achieve
\begin{equation}  \notag
    \|\pa^\al B\|^2\leq C \sum_{\substack{  |\al'|=N-1  \\ \al'_0=m}} \sum_{\substack{ |\al| =1 \\ \al_0=0}}\|\pa^{\al'} \pa^\al E\|^2.
\end{equation}
Now the general estimate for $B$ follows from the inductive
hypothesis.

For the last term, we use \eqref{maxwell.u}$_1$ to observe that
\begin{equation}\notag
    \pa^\al E=\pa^{\al'}\pa_tE=\pa^{\al'}\na_x\times B - \langle[\xi,-\xi]\FM^{1/2}, \{\FI-\FP\}\pa^{\al'}u\rangle.
\end{equation}
Similarly, using  {Cauchy-Schwarz} we observe that
\begin{equation}  \notag
    \|\pa^\al E\|^2\leq
    C \sum_{\substack{  |\al'|=N-1  \\ \al'_0=m}} \sum_{\substack{ |\al| =1 \\ \al_0=0}}\|\pa^{\al'} \pa^\al B\|^2
    +C \sum_{\substack{  |\al'|=N-1  }} \| \{\FI-\FP\} \pa^{\al'} u\|^2.
\end{equation}
This completes the estimate \eqref{time.d.claim} by induction. \qed

\medskip

Now, taking a suitable linear combination of \eqref{dr.he.p02} and
\eqref{dr.he.p01}, we see from \eqref{time.d.claim} that
\eqref{dr.he.p02} can be rewritten crudely as
\begin{equation}\notag
    \frac{d}{dt} \CE_N^{\rm h}(t)+\la \CE_N^{\rm h}(t)\leq
    C\sum_{\substack{1\leq |\al|\leq N\\ \al_0=0}}(\|\pa^\al u(t)\|^2+\|\pa^\al [E(t),B(t)]\|^2).
\end{equation}
Here we noticed that from \eqref{def.dNm} for a small $\kappa  >0$ it holds that
$$
\CD_{N}(t)
-
\kappa \sum_{\substack{  |\al'|=N-1  }} \| \{\FI-\FP\} \pa^{\al'} u(t)\|^2
\ge 0.
$$
Then using  the norm \eqref{brief.norm}, we have equivalently shown
\begin{equation}\label{dr.he.p02.1}
    \frac{d}{dt} \CE_N^{\rm h}(t)+\la \CE_N^{\rm h}(t)\leq C\|\na_x U(t)\|_{\CH^{N-1}}^2.
\end{equation}
As before, it follows from Theorem \ref{thm.ls} and the representation \eqref{decom.U} of $U(t)$ that
\begin{multline}
\label{dr.he.p03}
 \|\na_x U(t)\|_{\CH^{N-1}}^2 \leq  C (1+t)^{-\frac{5}{2}} \|U_0\|_{\CH^{N+3}\cap \CZ_1}^2\\
 +C \int_0^t (1+t-s)^{-\frac{5}{2}} \|\nu^{-1/2}\sourceG_1(s)\|^2_{L^2_\xi(H^{N+3}_x)\cap Z_1}ds\\
+ C \left[\int_0^t (1+t-s)^{-\frac{5}{4}} \|\sourceG_2(s)\|_{L^2_\xi(H^{N+3}_x)\cap Z_1}ds\right]^2.
\end{multline}
Recall the estimates from Lemma \ref{lem.h1h2} so that one has
\begin{eqnarray}
&& \|\nu^{-1/2}\sourceG_1(t)\|^2_{L^2_\xi(H^{N+3}_x)\cap Z_1}\leq
C\CE_{N+3,1}(t)\CE_{N+3}(t),\label{g1.high}\\
&&\|\sourceG_2(t)\|_{L^2_\xi(H^{N+3}_x)\cap Z_1}\leq
C\CE_{N+3}(t),\label{g2.high}
\end{eqnarray}
for any $t\geq 0$. Since we suppose that $\eps_{ {N+5},1}$ is
sufficiently small, the first part of Theorem \ref{thm.ns} implies
\begin{equation}  \notag
    \CE_{N+3,1}(t)\leq C \eps_{N+5,1}(1+t)^{-\frac{3}{2}}.
\end{equation}
Putting \eqref{g1.high}-\eqref{g2.high} together the above estimate
into \eqref{dr.he.p03} then gives
\begin{equation}  \notag
 \|\na_x U(t)\|_{\CH^{N-1}}^2 \leq  C  \eps_{N+5,1}(1+t)^{-\frac{5}{2}}.
\end{equation}
Using \eqref{dr.he.p02.1}, this implies from the Gronwall inequality
that \eqref{thm.ns.3} holds true for any $t\geq 0$. This completes
the proof of Theorem \ref{thm.ns}. \qed

\begin{remark}
It is easy to see that, using the similar method as for the proof \eqref{thm.energy.1}, \eqref{thm.energy.2} without velocity weights in the high-order energy functional $\CE_N^{\rm h}(t)$, the Lyapunov inequality can be refined as
$$
\frac{d}{dt}\CE_{N,m}^{\rm h}(t)+\la \CD_{N,m}(t)\leq C(\|\nu^{1/2}\na_x\FP u(t)\|^2+\|\na_x\times B(t)\|^2),
$$
for any $t\geq 0$ provided that $\CE_{N,m-1}(0)$ is sufficiently small, where the integer $m\geq 0$ is given. Thus, starting from the above inequality as for \eqref{dr.he.p01}, it can be proved that
$$
    \CE_{N,m}^{\rm h}(t)\leq C\eps_{N+5,1} (1+t)^{-\frac{5}{2}},
$$
holds for $t\geq 0$ if  $\CE_{N,m-1}(0)$ and $\eps_{N+5,1}$ are sufficiently small.
\end{remark}


\subsection{Decay rates of solutions in  {$L^r_x$}}\label{sec.decayNL.3} In this subsection, we shall prove Corollary \ref{cor}. It suffices to obtain time rates in \eqref{cor.1} and \eqref{cor.2}. They hold true when $r=2$ due to  assumptions of Corollary \ref{cor}, Theorem \ref{thm.ns} and definitions \eqref{def.eNm}, \eqref{def.eNm.h} of $\CE_{N,m}(t)$ and $\CE_{N,m}^{\rm h}(t)$, where
 {
\eqref{app.p2}
}
is used to estimate $\|E(t)\|$.

Then, the rest is to verify
\begin{equation}\label{dr.inf.0}
  \|U(t)\|_{Z_\infty}= \|u(t)\|_{L^2_\xi(L^\infty_x)}+\|[E(t),B(t)]\|_{L^\infty_x}\leq C (1+t)^{-\frac{3}{2}}.
\end{equation}

\begin{lemma}\label{lem.semi.inf}
Let $U_0=[u_0,E_0,B_0]$ satisfy \eqref{comp.con}. Then, it holds that
\begin{equation}\label{lem.semi.inf.1}
    \|\semiG (t)U_0\|_{\CZ_\infty}\leq C(1+t)^{-\frac{3}{2}}(\|U_0\|_{\CZ_1}+\|\na_x^3U_0\|_{\CH^3}),
\end{equation}
for any $t\geq 0$.
\end{lemma}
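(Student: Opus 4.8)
The plan is to deduce this $L^\infty_x$ estimate from the $L^2_x$-based decay bounds of Theorem \ref{thm.ls} by interpolating through a Sobolev embedding with the minimal number of extra derivatives. First I would recall the optimal Sobolev-type inequality for functions on $\R^3$ that gains a controlled gap between $L^2_x$ and $L^\infty_x$: for $g=g(x,\xi)$ one has, at fixed $\xi$,
\begin{equation*}
\|g(\cdot,\xi)\|_{L^\infty_x}\le C\|g(\cdot,\xi)\|_{L^2_x}^{1/4}\|\na_x^2 g(\cdot,\xi)\|_{L^2_x}^{3/4},
\end{equation*}
or more conveniently the bound $\|g(\cdot,\xi)\|_{L^\infty_x}\le C\|\na_x g(\cdot,\xi)\|_{H^1_x}$ used already in \eqref{sobolev.trick}, combined with interpolation. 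Integrating the $L^\infty_x$ bound in $\xi$ (after squaring and applying Cauchy-Schwarz/H\"older in $\xi$), this yields
\begin{equation*}
\|g\|_{Z_\infty}=\|g\|_{L^2_\xi(L^\infty_x)}\le C\|g\|_{Z_2}^{1/4}\|\na_x^2 g\|_{Z_2}^{3/4},
\end{equation*}
and similarly $\|[E,B]\|_{L^\infty_x}\le C\|[E,B]\|_{L^2_x}^{1/4}\|\na_x^2[E,B]\|_{L^2_x}^{3/4}$. In the notation \eqref{brief.norm} this reads $\|U\|_{\CZ_\infty}\le C\|U\|_{\CL^2}^{1/4}\|\na_x^2 U\|_{\CL^2}^{3/4}$.

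Next I would apply Theorem \ref{thm.ls}, specifically \eqref{thm.ls.1} with $\sourceG=0$, to $U^I=\semiG(t)U_0$. Taking $r=1$, $m=0$, and $\ell$ large enough (say $\ell=3/2$) gives
\begin{equation*}
\|U^I(t)\|_{\CL^2}\le C(1+t)^{-3/4}\|U_0\|_{\CZ_1}+C(1+t)^{-3/4}\|\na_x^{3/2}U_0\|_{\CL^2}\le C(1+t)^{-3/4}(\|U_0\|_{\CZ_1}+\|\na_x^3 U_0\|_{\CH^3}),
\end{equation*}
where the last step just absorbs the fractional $H^{3/2}$ norm into the integer norm $\|\na_x^3 U_0\|_{\CH^3}$. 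Likewise \eqref{thm.ls.1} with $r=1$, $m=2$, and $\ell=3/2$ gives
\begin{equation*}
\|\na_x^2 U^I(t)\|_{\CL^2}\le C(1+t)^{-3/4-1}\|U_0\|_{\CZ_1}+C(1+t)^{-3/4}\|\na_x^{7/2}U_0\|_{\CL^2}\le C(1+t)^{-3/4}(\|U_0\|_{\CZ_1}+\|\na_x^3 U_0\|_{\CH^3}).
\end{equation*}
Here I am being slightly wasteful: the pure heat-type part of $\|\na_x^2 U^I\|$ decays like $(1+t)^{-7/4}$, but the high-frequency (Maxwell) part only decays like $(1+t)^{-3/4}$ with the regularity budget $\na_x^{7/2}U_0$, controlled by $\|\na_x^3 U_0\|_{\CH^3}$ since $7/2\le 3+3$. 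Inserting both bounds into the interpolation inequality,
\begin{equation*}
\|\semiG(t)U_0\|_{\CZ_\infty}\le C\big((1+t)^{-3/4}\big)^{1/4}\big((1+t)^{-3/4}\big)^{3/4}(\|U_0\|_{\CZ_1}+\|\na_x^3 U_0\|_{\CH^3})=C(1+t)^{-3/4}(\cdots),
\end{equation*}
which is weaker than claimed — so this naive split loses the extra power of $t$.

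The main obstacle, and the point requiring care, is therefore to recover the full $(1+t)^{-3/2}$ rate rather than $(1+t)^{-3/4}$. To fix this one must track the low- and high-frequency contributions separately through the Sobolev interpolation, exactly as in the frequency-splitting proof of Theorem \ref{thm.ls}: for the low-frequency piece $U^I_{\rm low}$ one has $\|\na_x^m U^I_{\rm low}(t)\|_{\CL^2}\le C(1+t)^{-3/4-m/2}\|U_0\|_{\CZ_1}$ for all $m$, so interpolation between $m=0$ and $m=2$ gives $\|U^I_{\rm low}(t)\|_{\CZ_\infty}\le C(1+t)^{-3/4}\cdot(1+t)^{-(3/4+1)\cdot 3/4}$... still not optimal by pure interpolation, so instead one directly applies the $L^\infty$-via-Sobolev bound to $U^I_{\rm low}$ using that on the low-frequency ball $\|\na_x^m\hat U\|$ is comparable across $m$, obtaining the heat-kernel $L^1\to L^\infty$ rate $(1+t)^{-3/2}$; while for the high-frequency piece one simply uses $\|U^I_{\rm high}(t)\|_{\CZ_\infty}\le C\|\na_x^2 U^I_{\rm high}(t)\|_{\CL^2}^{\theta}\|\cdots\|$ and pays enough derivatives ($\na_x^3 U_0\in \CH^3$) so that the surviving factor $\sup_{|k|\ge1}|k|^{-2\ell}e^{-\lambda t/|k|^2}\le C(1+t)^{-\ell}$ with $\ell$ chosen $\ge 3/2$ yields the same $(1+t)^{-3/2}$ decay. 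Combining the two regimes gives \eqref{lem.semi.inf.1}. I expect the bookkeeping of exactly how many derivatives are needed in the high-frequency estimate (to land on $\|\na_x^3 U_0\|_{\CH^3}$ and not more) to be the only genuinely delicate part; the low-frequency part is a routine heat-kernel $L^1$–$L^\infty$ estimate.
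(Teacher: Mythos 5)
Your overall strategy---combine the linear decay \eqref{thm.ls.1} with a Gagliardo--Nirenberg embedding into $L^\infty_x$---is exactly the paper's, but your execution stalls on a bookkeeping error that is not a real obstacle. In your $m=2$ application of \eqref{thm.ls.1} you took $\ell=3/2$, which caps the high-frequency contribution at $(1+t)^{-3/4}$; but the regularity budget $\|\na_x^3U_0\|_{\CH^3}$ controls derivatives up to order $6$, so you may take $\ell=7/2$ (then $m+\ell=11/2\le 6$) and obtain $\|\na_x^2 U^I(t)\|_{\CL^2}\le C(1+t)^{-7/4}\bigl(\|U_0\|_{\CZ_1}+\|\na_x^3U_0\|_{\CH^3}\bigr)$. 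With that choice your own interpolation $\|U\|_{\CZ_\infty}\le C\|U\|_{\CL^2}^{1/4}\|\na_x^2U\|_{\CL^2}^{3/4}$ already yields the rate $(1+t)^{-\frac14\cdot\frac34-\frac34\cdot\frac74}=(1+t)^{-3/2}$, i.e.\ \eqref{lem.semi.inf.1}, and the entire second half of your proposal (the low/high frequency re-splitting) is unnecessary; moreover that part is itself miscomputed---the ``pure interpolation'' for the low-frequency piece that you dismiss in fact gives exactly $-\tfrac{3}{16}-\tfrac{21}{16}=-\tfrac32$, and the remaining fix is only sketched. So as written the proposal does not reach the claimed decay; the repair is a one-line change of $\ell$, not a new argument.

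For comparison, the paper sidesteps the zero-order norm altogether: it uses the inequality $\|g\|_{L^\infty_x}\le C\|\na_x g\|_{L^2_x}^{1/2}\|\na_x^2 g\|_{L^2_x}^{1/2}$ (see \cite[Proposition 3.8]{Tay}, or optimize \eqref{sobolev.trick}), and applies \eqref{thm.ls.1} with $(m,r,\ell)=(1,1,5/2)$ and $(2,1,7/2)$, giving rates $(1+t)^{-5/4}$ and $(1+t)^{-7/4}$ whose geometric mean is $(1+t)^{-3/2}$, at derivative costs $\na_x^{7/2}U_0$ and $\na_x^{11/2}U_0$, both within $\|\na_x^3U_0\|_{\CH^3}$. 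Either version of the Sobolev inequality works once the parameters $\ell$ are chosen to exhaust the available regularity; no frequency splitting beyond what is already built into Theorem \ref{thm.ls} is needed.
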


\begin{proof}
As in \eqref{def.lu1}, let us set $U^I(t)=\semiG (t)U_0$ with
$U^I=[u^I,E^I,B^I]$. From the Sobolev inequality \cite[Proposition
3.8]{Tay}, one has
\begin{equation}\label{lem.semi.inf.p1}
  \|u^I\|_{L^2_\xi(L^\infty_x)} \leq  C \|\na_x u^I\|^{1/2}\|\na_x^2 u^I\|^{1/2}.
\end{equation}
Notice that the proof of this inequality also follows easily from optimizing \eqref{sobolev.trick}.
For the first-order derivative, using \eqref{thm.ls.1} with $m=1, r=1, \ell=5/2$, it follows that
\begin{equation*}
    \|\na_x u^I(t)\|\leq C (1+t)^{-\frac{5}{4}} (\|U_0\|_{\CZ_1}+\|\na_x^3U_0\|_{\CH^1}).
\end{equation*}
For the second-order derivative, similarly letting  $m=2, r=1, \ell=7/2$ in \eqref{thm.ls.1}, one has
\begin{equation*}
    \|\na_x^2 u^I(t)\|\leq C (1+t)^{-\frac{7}{4}} (\|U_0\|_{\CZ_1}+\|\na_x^5U_0\|_{\CH^1}).
\end{equation*}
Putting the above two estimates into \eqref{lem.semi.inf.p1} implies
\begin{equation*}
    \|u^I(t)\|_{Z_\infty}\leq C(1+t)^{-\frac{3}{2}}(\|U_0\|_{\CZ_1}+\|\na_x^3U_0\|_{\CH^3}).
\end{equation*}
Similarly, it holds that
\begin{equation*}
    \|[E^I(t),B^I(t)]\|_{Z_\infty}\leq C(1+t)^{-\frac{3}{2}}(\|U_0\|_{\CZ_1}+\|\na_x^3U_0\|_{\CH^3}).
\end{equation*}
Therefore, \eqref{lem.semi.inf.1} follows.
\end{proof}

\noindent Now, by applying Lemma \ref{lem.semi.inf} to the representation \eqref{decom.U} of $U(t)$, one has
\begin{eqnarray}\label{dr.inf.1}
  \|U(t)\|_{\CZ_\infty} &\leq &  C(1+t)^{-\frac{3}{2}}\|U_0\|_{\CH^{6}\cap\CZ_1}\\
  &&+C\int_0^t(1+t-s)^{-\frac{3}{2}}\|g(s)\|_{L^2_\xi(H^{6}_x)\cap Z_1}ds.
  \notag
\end{eqnarray}
Notice from Lemma \ref{lem.h1h2} that
\begin{equation*}
    \|g(t)\|_{L^2_\xi(H^{6}_x)\cap Z_1}\leq C\CE_{6,2}(t).
\end{equation*}
Now since $\eps_{8,2}$ in \eqref{def.id.jm} is sufficiently small, it follows from \eqref{thm.ns.2} that
\begin{equation*}
    \CE_{6,2}(t)\leq C\eps_{8,2}(1+t)^{-\frac{3}{2}}.
\end{equation*}
Putting the above estimates into \eqref{dr.inf.1} leads to  \eqref{dr.inf.0}. This completes the proof of Corollary \ref{cor}. \qed








\subsection*{Acknowledgments}
The authors thank the anonymous referee for many helpful and valuable comments that substantially improve the presentation of this paper. R.-J. Duan was partially supported by the Direct Grant 2010/2011, and would acknowledge RICAM, Austrian Academy of Sciences when this project was started there in 2009, and also thank Professor Shuichi Kawashima for sending him some recent work \cite{IHK,IK,UWK}. R.M. Strain was partially supported by the NSF grant DMS-0901463 and an Alfred P. Sloan Foundation Research Fellowship.


\frenchspacing
\bibliographystyle{plain}





\end{document}